\patchcmd{\@addmarginpar}{\ifodd\c@page}{\ifodd\c@page\@tempcnta\m@ne}{}{}
\newcommand\str{\bgroup\markoverwith
{\textcolor{red}{\rule[0.5ex]{2pt}{1.5pt}}}\ULon} 
\numberwithin{theorem}{section}
\newcommand{\TheTitle}{A distributed ADMM-like method for resource sharing 
over time-varying networks}
\newcommand{\TheShortTitle}{An ADMM-like method for resource sharing}
\newcommand{\TheAuthors}{Necdet Serhat Aybat, and Erfan Yazdandoost Hamedani}
\headers{\TheShortTitle}{\TheAuthors}
\title{{\TheTitle}\thanks{In our prior work~\cite{aybat16_dual_static}, published in the Proceedings of the $50^{th}$ Asilomar Conference on Signals, Systems and Computer, we studied the resource sharing problem over static and undirected communication networks -- see Section~\ref{sec:static} for details.
\funding{This research was partially supported by NSF grants CMMI-1400217 and CMMI-1635106, and ARO grant W911NF-17-1-0298.}}}
\author{
  Necdet Serhat Aybat\thanks{Industrial \& Manufacturing Engineering Department, The Pennsylvania State University, PA
  (\email{nsa10@psu.edu},\email{evy5047@psu.edu}).}
  \and
  Erfan Yazdandoost Hamedani\footnotemark[2]}
\DeclareMathOperator{\diag}{diag}
\def\grad{\nabla}
\def\ba{\mathbf{a}}
\def\bb{\mathbf{b}}
\def\bc{\mathbf{c}}
\def\be{\mathbf{e}}
\def\bp{\mathbf{p}}
\def\bq{\mathbf{q}}
\def\bs{\mathbf{s}}
\def\bu{\mathbf{u}}
\def\bv{\mathbf{v}}
\def\bw{\mathbf{w}}
\def\bx{\mathbf{x}}  
\def\by{\mathbf{y}}
\def\bz{\mathbf{z}}
\def\bC{\mathbf{C}}
\def\bD{\mathbf{D}}
\def\bI{\mathbf{I}}
\def\bJ{\mathbf{J}}
\def\bL{\mathbf{L}}
\def\bP{\mathbf{P}}
\def\bQ{\mathbf{Q}}
\def\cB{\mathcal{B}}
\def\cC{\mathcal{C}}
\def\cD{\mathcal{D}}
\def\cE{\mathcal{E}}
\def\cF{\mathcal{F}}
\def\cG{\mathcal{G}}
\def\cI{\mathcal{I}}
\def\cK{\mathcal{K}}
\def\cL{\mathcal{L}}
\def\cN{\mathcal{N}}
\def\cO{\mathcal{O}}
\def\cP{\mathcal{P}}
\def\cQ{\mathcal{Q}}
\def\cR{\mathcal{R}}
\def\cS{\mathcal{S}}
\def\cW{\mathcal{W}}
\def\cX{\mathcal{X}}
\def\cY{\mathcal{Y}}
\def\cZ{\mathcal{Z}}
\def\smskip{\smallskip}
\def\texitem#1{\par\smskip\noindent\hangindent 25pt
               \hbox to 25pt {\hss #1 ~}\ignorespaces}
\def\norm#1{\left\|#1\right\|}
\newcommand{\BEAS}{\begin{eqnarray*}}
\newcommand{\EEAS}{\end{eqnarray*}}
\newcommand{\BEA}{\begin{eqnarray}}
\newcommand{\EEA}{\end{eqnarray}}
\newcommand{\BEQ}{\begin{eqnarray}}
\newcommand{\EEQ}{\end{eqnarray}}
\newcommand{\BIT}{\begin{itemize}}
\newcommand{\EIT}{\end{itemize}}
\newcommand{\BNUM}{\begin{enumerate}}
\newcommand{\ENUM}{\end{enumerate}}
\newcommand{\BA}{\begin{array}}
\newcommand{\EA}{\end{array}}
\newcommand{\ones}{\mathbf 1}
\newcommand{\reals}{\mathbb{R}}
\newcommand{\integers}{\mathbb{Z}}
\newcommand{\dom}{\mathop{\bf dom}}
\newcommand{\intr}{\mathop{\bf int}}
\newcommand{\relint}{\mathop{\bf rel int}}
\def\green#1{\textcolor{green}{#1}}
\def\purple#1{\textcolor{purple}{#1}}
\newif\ifpagenumbering
\newsavebox{\theorembox}
\newsavebox{\lemmabox}
\newsavebox{\defnbox}
\newsavebox{\corollarybox}
\newsavebox{\propositionbox}
\newsavebox{\remarkbox}
\newsavebox{\assbox}
\savebox{\theorembox}{\noindent\bf Theorem}
\savebox{\lemmabox}{\noindent\bf Lemma}
\savebox{\defnbox}{\noindent\bf Definition}
\savebox{\corollarybox}{\noindent\bf Corollary}
\savebox{\propositionbox}{\noindent\bf Proposition}
\savebox{\remarkbox}{\noindent\bf Remark}
\savebox{\assbox}{\noindent\bf Assumption}
\newtheorem{assumption}{\usebox{\assbox}}
\newtheorem{remark}{\usebox{\remarkbox}}[section]
\newtheorem{defn}{\usebox{\defnbox}}
\DeclareMathOperator*{\argmin}{\arg\!\min}
\def\fprod#1{\left\langle#1\right\rangle}
\def\prox#1{\mathbf{prox}_{#1}}
\def\ind#1{\mathds{1}_{#1}}
\def\id{\mathbf{I}}
\def\zero{\mathbf{0}}
\def\one{\mathbf{1}}
\def\sa#1{{#1}}
\def\bxi{\boldsymbol{\xi}}
\def\bet{\boldsymbol{\eta}}
\def\bnu{\boldsymbol{\nu}}
\def\bgm{\boldsymbol{\gamma}}
\def\st{\mathbf{s.t.}}
\def\ey#1{\purple{#1}}
\def\eyh#1{\green{#1}}
\def\eyy#1{\textcolor{black}{#1}}
\def\nv#1{{#1}}
\def\Dg{\mathbf{D}_\gamma}
\def\xt{\hbox{$\tilde{\bx}^{k}$}^{\top}}
\def\Ct{\widetilde{\cC}}
\begin{document}

\maketitle

\begin{abstract}
We consider cooperative multi-agent resource sharing problems over time-varying {communication networks, where only local communications are allowed.} 
The objective is to minimize the sum of agent-specific composite convex functions subject to a conic constraint that couples agents' decisions. We propose a distributed primal-dual algorithm DPDA-D to solve the saddle point formulation of the sharing problem on time-varying (un)directed communication networks; 
{and we show that primal-dual iterate sequence converges to a point defined by a primal optimal solution and a consensual dual price for the coupling constraint.}
Furthermore, we provide convergence rates for suboptimality, infeasibility and consensus violation of agents' dual price assessments;
examine the effect of underlying network topology on the convergence rates of the proposed decentralized algorithm; and compare 
DPDA-D with 
\sa{centralized methods
on the basis pursuit denoising and multi-channel power allocation problems}.
\end{abstract}

\begin{keywords}
multi-agent distributed optimization, primal-dual method, resource sharing problem, convex optimization, convergence rate
\end{keywords}

\begin{AMS}
90C25, 90C46, 68W15
\end{AMS}
\vspace*{-2mm}
\section{Introduction}
Let $\{\cG^t\}_{t\in\reals_+}$ denote a time-varying graph of $N$ computing nodes. More precisely, for $t\geq 0$, the graph has the form $\cG^t=(\cN,\cE^t)$, where $\cN\triangleq\{1,\ldots,N\}$ and $\mathcal{E}^t\subseteq \mathcal{N}\times \mathcal{N}$ 
is the set of \nv{\emph{directed}} edges at time $t$.
Suppose each node $i\in\cN$ has a \emph{private} constraint function $g_i:\reals^{n_i}\rightarrow\reals^{m}$ and a \emph{private} cost function $\varphi_i:\reals^{n_i}\rightarrow\reals\cup\{+\infty\}$ such that
{\small
\begin{equation}
\label{eq:F_i}
\varphi_i(\xi_i)\triangleq \rho_i(\xi_i) + f_i(\xi_i),
\end{equation}
}%
where $\rho_i: \mathbb{R}^{n_i} \rightarrow \mathbb{R}\cup\{+\infty\}$ is a proper, closed convex function (possibly \emph{non-smooth}), $f_i: \mathbb{R}^{n_i} \rightarrow \mathbb{R}$ is a \emph{smooth} convex function. Assuming each node $i\in\cN$ has only access to $\varphi_i$, $g_i$ and a closed convex cone $\mathcal{K}\subseteq{R}^{m}$,
consider the following 
problem:
\vspace*{-1mm}
{\small
\begin{align}\label{eq:central_problem}
\min_{\bxi\in\reals^n}\ {\varphi}(\bxi)\triangleq\sum_{i\in \mathcal{N}}\varphi_i(\xi_i)\quad \hbox{s.t.}\quad \nv{g(\bxi)\triangleq\sum_{i\in\cN}g_i(\xi_i) \in -\mathcal{K}},
\end{align}}%
\noindent where 
$\xi_i \in \reals^{n_i}$ denotes the {\it local} decision 
of node $i\in\cN$ and $n\triangleq\sum_{i\in\cN}n_i$.
\begin{assumption}\label{assum:functions}
\sa{For all $i\in\cN$, the function $f_i$ is differentiable on an open set containing $\dom \rho_i$, and $\grad f_i$ is Lipschitz with constant {$L_{f_i}$}; the prox map of $\rho_i$, \vspace*{-1mm}
{\small
\begin{equation}
\label{eq:prox}
\prox{\rho_i}(\xi_i)\triangleq\argmin_{x_i \in \reals^{n_i}} \left\{ \rho_i(x_i)+\tfrac{1}{2}\norm{x_i-\xi_i}^2 \right\},
\vspace*{-1mm}
\end{equation}
}%
is \emph{efficiently} computable, 
where $\norm{.}$ denotes the Euclidean norm. Moreover, $g_i$ is $\cK$-convex~\cite[Chapter~3.6.2]{boyd2004convex}, Lipschitz continuous with constant $C_{g_i}$, and has a Lipschitz continuous Jacobian, $\bJ g_i$, with constant $L_{g_i}$.}
\end{assumption}
In this paper, we {design a distributed algorithm for solving \eqref{eq:central_problem} and} provide a unified approach for analyzing the convergence behavior of the proposed method, regardless of
whether the communications over the time-varying graph $\{\cG^t\}$ are unidirectional or bidirectional.
To this aim, \nv{we need some definitions and assumptions related to the time-varying graph $\{\cG^t\}$. To unify the notation, we assume all edges are directed, and consider undirected graphs as a special case of directed graphs.}
\begin{defn}
\label{def:neighbors}
\nv{For any $t\geq 0$, $\cG^t=(\cN,\cE^t)$ is a directed graph; let $\cN^{\,t,{\rm in}}_i\triangleq\{j\in\cN:\ (j,i)\in\cE^t\}\cup\{i\}$ and $\cN^{\,t,{\rm out}}_i\triangleq\{j\in\cN:\ (i,j)\in\cE^t\}\cup\{i\}$ denote the in-neighbors and out-neighbors of node $i\in\cN$ at time $t$, respectively; and let $d_i^t\triangleq |\cN^{\,t,{\rm out}}_i|-1$ be the out-degree of node $i\in\cN$. \nv{$\cG^t=(\cN,\cE^t)$ is called undirected when $(i,j)\in\cE^t$ if and only if $(j,i)\in\cE^t$. For undirected $\cG^t$, let $\cN_i^t\triangleq\cN^{\,t,{\rm in}}_i\setminus\{i\}=\cN^{\,t,{\rm out}}_i\setminus\{i\}$ 
denote the 
neighbors of $i \in \mathcal{N}$}, and {$d_i^t\triangleq |\mathcal{N}_i^t|$ represents} the degree of node $i\in \mathcal{N}$ at time $t$.}
\end{defn}
\begin{assumption}
\label{assump:communication_general}
\nv{When $\cG^t$ is a (general) directed graph, node $i\in\cN$ can receive data from $j\in\cN$ only if $j\in\cN_i^{\,t,{\rm in}}$, i.e., $(j,i)\in\cE^t$, and can send data to $j\in\cN$ only if $j\in\cN_i^{\,t,{\rm out}}$, i.e., $(i,j)\in\cE^t$; 
on the other hand, when $\cG^t$ is undirected, node $i\in\cN$ can send and receive data to and from $j\in\cN$ at time $t$ only if $j\in\cN_i^t$, i.e., $(i,j) \in \cE^t$.}
\end{assumption}

Our objective is to solve \eqref{eq:central_problem} in a \emph{decentralized} fashion using the computing nodes in $\cN$ {while the information exchange among the nodes is restricted to edges in $\cE^t$ for $t\geq 0$ according to Assumption~\ref{assump:communication_general}}. {
We are interested in designing algorithms which can distribute the computation over the nodes such that each node's computation is based on the local topology of 
$\cG^t$ and information only available to that node.}

Decentralized optimization over communication networks has 
drawn 
attention from a wide range of application areas: coordination and control in multirobot networks, parameter estimation in wireless sensor networks, 
processing distributed big data in machine learning, and distributed power control in cellular networks, to name a few. In these examples, the network size can be prohibitively large for centralized optimization, which requires a fusion center that collects the physically distributed data and runs a centralized optimization method. This process has expensive communication overhead, requires
large enough memory to store and process the data, and also may violate data privacy in case agents are not willing to share their data even though they are collaborative~\cite{olfati2007consensus}. Therefore, a common objective of today's big-data networks is to use decentralized optimization techniques to avoid expensive communication overhead required by the centralized setting and to enhance the data privacy. The communication networks in these application areas may be directed, i.e., communication links can be unidirectional, and/or the network may be time-varying, e.g., communication links in a wireless network can be on/off over time due to failures or the links may exist among agents depending on their inter-distances.

In the remainder of this section, as a brief preliminary, we discuss 
the primal-dual algorithm~(PDA) proposed in~\cite{chambolle2015ergodic} \nv{to solve convex-concave saddle-point problems with a \emph{bilinear} coupling term}, explain its connections to ADMM-like algorithms, and briefly discuss some recent work related to ours. \nv{It is worth noting that the saddle point~(SP) problem formulation of \eqref{eq:central_problem} contains a coupling term that is \emph{not} bilinear due to nonlinear $\{g_i\}_{i\in\cN}$; therefore, PDA is not applicable.}
{Next, in Section~\ref{sec:dynamic}, we propose DPDA-D, a new distributed algorithm \nv{based on PDA and extending it to handle nonlinear constraints, for solving the SP formulation of the multi-agent sharing problem in~\eqref{eq:central_problem}} when the topology of the connectivity graph is \emph{time-varying} with 
{\emph{(un)directed}} communication links, and 
we state the main theorem establishing the convergence properties of DPDA-D; and in Section~\ref{sec:convergence}, we provide the proof of the main theorem.
{Subsequently, in Sections~\ref{sec:stepsize}, \ref{sec:static-nonlinear} and~\ref{sec:dual-bound}, we discuss certain details related to the applicability of the method in practice.} In Section~\ref{sec:numerics}, we compare our method with Prox-JADMM~\cite{deng2013parallel} on the basis pursuit denoising problem, \sa{and with Mirror-prox~\cite{he2015mirror} on the multi-channel power allocation problem}; and finally, in~Section~\ref{sec:conclusions} we state our concluding remarks and briefly discuss potential future work.}
\subsection{Preliminary}
In this paper, we study an \emph{inexact} 
variant of the primal-dual algorithm~(PDA) proposed in~\cite{chambolle2015ergodic}, \nv{extending it to handle nonlinear constraints, to solve the 
SP formulation of \eqref{eq:central_problem}} in a decentralized manner over a time-varying communication network. 
{There has been active research on 
efficient algorithms for convex-concave saddle point problems $\min_{\bx}\max_{\by}\cL(\bx,\by)$, e.g.,~\cite{chambolle2011first,chen2014optimal,he2012convergence,nedic2009subgradient}.}
{PDA~\cite{chambolle2015ergodic} also belongs to this family and is proposed for the 
convex-concave 
SP problem:}
{\small
\begin{align}
\label{eq:saddle-problem}
\min_{\bx\in\cX}\max_{\by\in\cY}\cL(\bx,\by)\triangleq\Phi(\bx)+\fprod{\nv{T(\bx)},\by}-h(\by),
\end{align}
}%
where $\cX$ and $\cY$ are finite-dimensional vector spaces, $\Phi(\bx)\triangleq\rho(\bx)+f(\bx)$, $\rho$ and $h$ are possibly non-smooth convex functions, $f$ is a convex function and has a Lipschitz continuous gradient defined on $\dom \rho$ with Lipschitz constant $L$, and $T:\cX\rightarrow\cY$ is a \nv{\emph{linear} map}. Briefly, given $\bx^0\in\cX$, $\by^0\in\cY$ and algorithm parameters $\nu_x,\nu_y>0$, PDA consists of two proximal-gradient steps that can be written as: 
{\small
\begin{subequations}
\label{eq:pda}
\begin{align}
\bx^{k+1}\gets\argmin_{\bx\in\cX}~&\rho(\bx)+f(\bx^k)+\fprod{\grad f(\bx^k),~\bx-\bx^k}+\fprod{\nv{T(\bx)},\by^k}+\frac{1}{\nu_x}D_x(\bx,\bx^k)\label{eq:PDA-x}\\
\by^{k+1}\gets\argmin_{\by\in\cY}~&h(\by)-\fprod{\nv{2T(\bx^{k+1})-T(\bx^k)},\by}+\frac{1}{\nu_y}D_y(\by,\by^k),
\label{eq:PDA-y}
\end{align}
\end{subequations}
}%
where $D_x$ and $D_y$ are Bregman distance functions corresponding to some continuously differentiable strongly convex functions $\psi_x$ and $\psi_y$
such that $\dom \psi_x\supset\dom\rho$ and $\dom \psi_y\supset\dom h$. In particular, $D_x(\bx,\bar{\bx})\triangleq\psi_x(\bx)-\psi_x(\bar{\bx})-\fprod{\grad \psi_x(\bar{\bx}),~\bx-\bar{\bx}}$, and $D_y$ is defined similarly. \nv{Abusing the notation, below we use $T$ also to denote the corresponding matrix, i.e., $T(\bx)=T\bx$.}

In~\cite{chambolle2015ergodic}, 
it is shown that, when the convexity modulus for $\psi_x$ and $\psi_y$ is 1, if {$\nu_x,\nu_y>0$} are chosen such that $(\frac{1}{\nu_x}-L)\frac{1}{\nu_y}\geq\sigma^2_{\max}(T)$, then for any $\bx,\by\in\cX\times\cY$, 
{\small
\begin{align}
\label{eq:DPA-rate}
{\cL(\bar{\bx}^K,\by)-\cL(\bx,\bar{\by}^K)}\leq\tfrac{1}{K}\big(\tfrac{1}{\nu_x}D_x(\bx,\bx^0)+\tfrac{1}{\nu_y}D_y(\by,\by^0)-\fprod{T(\bx-\bx^0),\by-\by^0}\big),
\end{align}
}%
holds for all $K\geq 1$, where $\bar{\bx}^K\triangleq\frac{1}{K}\sum_{k=1}^K \bx^k$ and $\bar{\by}^K\triangleq\frac{1}{K}\sum_{k=1}^K \by^k$.

It is worth mentioning the connection between PDA and the alternating direction method of multipliers (ADMM). Indeed, when implemented on $\min_{\bv\in\cX^*,\by\in\cY}\{\Phi^*(\bv)+h(\by):\ \bv+T^\top\by=\mathbf{0}\}$, preconditioned ADMM is equivalent to PDA~\cite{chambolle2011first,chambolle2015ergodic}, where $\cX^*$ denotes the dual space \nv{and $\Phi^*$ is the convex conjugate of $\Phi$}. There is also a strong connection between the linearized ADMM algorithm proposed by Aybat et al. in~\cite{aybat2015distributed} and PDA proposed in~\cite{chambolle2015ergodic} -- for details of these relations,
see Section 1.4.

\textbf{Notation.} 
$\norm{\cdot}$ {denotes the Euclidean or the spectral norm depending on its argument, i.e., for a matrix $R$, $\norm{R}=\sigma_{\max}(R)$}. Given a convex set $\cS$, {let \eyy{$\sigma_{\cS}(\cdot)$} denote its support function, i.e., $\sigma_{\cS}(\theta)\triangleq\sup_{w\in \cS}\fprod{\theta,~w}$,}
let $\ind{S}(\cdot)$ denote the indicator function of $\cS$, i.e., $\ind{S}(w)=0$ for $w\in\cS$ and equal to $+\infty$ otherwise, and let $\cP_{\cS}(w)\triangleq\argmin\{\norm{v-w}:\ v\in\cS\}$ denote the Euclidean projection onto $\cS$. For a closed convex set $\cS$, we define the distance function as $d_{\cS}(w)\triangleq\norm{\cP_{\cS}(w)-w}$. Given a convex cone $\cK\in\reals^m$, let $\mathcal{K}^*$ denote its dual cone, i.e., $\mathcal{K}^*\triangleq\{\theta\in\reals^{m}: \ \langle \theta,w\rangle \geq 0\ \ \forall w\in\mathcal{K}\}$, and $\cK^\circ\triangleq -\cK^*$ denote the polar cone of $\cK$. {Note that for any cone $\cK\in\reals^m$, $\sigma_{\cK}(\theta)=0$ for $\theta\in\cK^\circ$ and equal to $+\infty$ if $\theta\not\in\cK^\circ$, i.e., $\sigma_{\cK}(\theta)=\ind{\cK^\circ}(\theta)$ for all $\theta\in\reals^m$.}
Given a convex function $h:\reals^n\rightarrow\reals\cup\{+\infty\}$, its convex conjugate is 
$h^*(w)\triangleq\sup_{\theta\in\reals^n}\fprod{w,\theta}-h(\theta)$, and \nv{for differentiable $h:\reals^n\to\reals^m$, $\bJ h:\reals^n\to\reals^{m\times n}$ denotes the Jacobian of $h$}. Throughout the paper, $\otimes$ denotes the Kronecker product, $\Pi$ denotes the Cartesian product, and $\id_n$ is the $n\times n$ identity matrix. $Q$-norm is defined as $\norm{z}_Q\triangleq (z^\top Q z)^{\tfrac{1}{2}}$ for any positive definite matrix $Q$.

\subsection{Our Previous Work on Resource Sharing} 
\label{sec:static}
{In~\cite{aybat16_dual_static}, we considered 
\eqref{eq:central_problem} \nv{
when $g_i(\xi)=r_i-R_i\xi_i$ is affine for $i\in\cN$}, over a \emph{static} and \emph{undirected} communication network $\cG=(\cN,\cE)$ as a dual consensus problem. 
\nv{Using Lagrangian duality, we reformulated it as \sa{an SP} problem, $\min_{\bxi} \max_{y\in\cK^\circ} \sum_{i\in \cN}\varphi_i(\xi_i)+\langle \sum_{i\in \cN}R_i\xi_i-r_i,~y \rangle$ which can be written in a distributed form through creating local copies of dual variable $y\in\reals^n$ as}
$\nv{(P)}:\min_{\substack{\bxi}} \max_{\substack{\by}}\big\{\sum_{i\in \cN}\varphi_i(\xi_i)+\langle R_i\xi_i-r_i,~y_i \rangle:\ y_i\in\cK^\circ\  \forall i\in\cN, \ y_i=y_j~\forall (i,j) \in \cE \big\},$
where $\bxi=[\xi_i]_{i\in\cN}$ and $\by=[y_i]_{i\in\cN}$. Using $M$, the edge-node incidence matrix of $\cG$, the consensus constraints $y_i=y_j$ for $(i,j)\in \mathcal{E}$ can be written as $M\by=\mathbf{0}$. 
\nv{Furthermore, by dualizing the consensus constraints,
we 
obtain another SP problem, equivalent to $(P)$, in the form of \eqref{eq:saddle-problem}:}
{\small
\begin{align}
\label{eq:saddle-static-comp}
\min_{\substack{\bxi}}\max_{\substack{\by\in\Pi_{i\in\cN}}\cK^\circ}\min_{\substack{\bw}}\cL(\bxi,\bw,\by)
=\min_{\substack{\bxi,\bw}}\max_{\substack{\by\in\Pi_{i\in\cN}\cK^\circ}}\cL(\bxi,\bw,\by),
\end{align}
}%
where $\cL(\bxi,\bw,\by)\triangleq \sum_{i\in \cN}\varphi_i(\xi_i)+\langle R_i\xi_i-r_i,~y_i \rangle -\fprod{\bw,M\by}$. The equality in \eqref{eq:saddle-static-comp} holds as long as $\cK$ is a pointed cone -- hence $\intr\left(\cK^\circ \right)\neq \emptyset$; {therefore, for each fixed $\bxi$, inner $\max_\by$ and $\min_\bw$ can be interchanged.}
The saddle-point problem on the right side of \eqref{eq:saddle-static-comp} is special case of \eqref{eq:saddle-problem} with a separable structure. Exploiting this special structure, we customized PDA in~\eqref{eq:pda} and proposed Algorithm~DPDA-S.
In~\cite{aybat16_dual_static} we showed that Algorithm~DPDA-S can solve the sharing problem~\eqref{eq:central_problem} \nv{with an affine conic constraint} in a \emph{decentralized} way and established its convergence properties provided that the node-specific primal-dual step-sizes $\{\tau_i,\kappa_i\}_{i\in\mathcal{N}}$ and the algorithm parameter $\gamma>0$ 
satisfy {$\frac{1}{\tau_i}>L_{f_i}$, and $\big({1\over \tau_i}-L_{f_i}\big)\big( {1\over \kappa_i}-2\gamma d_i\big) \geq \norm{R_i}^2$}, for all $i\in\cN$,
where $d_i$ denotes the degree of $i\in\cN$ for the static $\cG$. 
Our result in~\cite{aybat16_dual_static} refines the error bound in~\eqref{eq:DPA-rate} 
and establishes $\cO(1/k)$ ergodic rate in terms of suboptimality and infeasibility of the DPDA-S iterate sequence -- see Theorem 2 in \cite{aybat16_dual_static}.

{The arguments used for proving Theorem~2 in \cite{aybat16_dual_static} cannot be used for the \emph{time-varying directed} communication network setting considered in this paper {since the undirected network is encoded through the use of $M\by=\mathbf{0}$ constraint}.
However, when the topology is time-varying or when the edges are directed, it is not immediately clear how one can represent this problem as \sa{an SP} problem. 
To extend our previous results to a more general setting of time-varying topology with possibly directed edges, in this paper we develop a new SP formulation that can impose consensus over the dual variables while the formulation is independent of the changing topology.} \nv{Finally, the new method can also handle nonlinear conic constraints on resource sharing in~\eqref{eq:central_problem}.}
\subsection{Related Work}
Now we briefly review some recent work on the distributed resource sharing problem.
{From the application perspective, 
algorithms and their basic convergence analysis have been studied for the economic dispatch problem~(EDP), e.g., \cite{seifi2011electric} for power-flow networks and \cite{guo2016distributed,zhang2014efficient} for smart-grids. The variants of 
EDP considered in~\cite{guo2016distributed,seifi2011electric,zhang2014efficient} are special cases of~\eqref{eq:central_problem}. In particular, each node $i\in\cN$ has a convex objective function $f_i$, usually a quadratic function; $\rho_i(\xi_i)=\ind{\cX_i}(\xi_i)$, where $\cX_i$ is a local simple convex constraint set, $g_i(\xi_i)=\xi_i-r_i$ and $\cK=\{{\bf 0}\}$. In~\cite{seifi2011electric}, the aim is to optimize the total power generation cost in a DC 
power-flow model, 
\cite{guo2016distributed,zhang2014efficient} also study a similar problem considering random wind power injection
-- both papers establish basic convergence results without any rate guarantees.
Distributed resource allocation problem can also arise in controlling and coordinating internet services over hybrid edge-cloud networks; for which a distributed ADMM algorithm is proposed in~\cite{huang2017collaborative} to solve a problem in form of \eqref{eq:central_problem} with $\cK=\{{\bf 0}\}$ and {$g_i(\xi_i)=\xi_i-r_i$}.} {\cite{yang2017distributed} studies EDP considering communication delays in directed time-varying network topology, and 
{an algorithm based on push-sum protocol is proposed.}
}

{From the theoretical point of view, there has been active research on distributed resource allocation problem.
In~\cite{doan2016distributed}, a distributed Lagrangian method (DLM) has been proposed for solving a particular case of \eqref{eq:central_problem} on a \emph{static} network; more precisely, the objective is to minimize sum of local convex functions subject to local convex \emph{compact} sets and a coupling constraint of the form $\sum_{i\in\cN}\xi_i-r_i=\mathbf{0}$. In~\cite{doan2016distributed}, the authors establish convergence rate of $\cO(\log(k)/\sqrt{k})$ {for the} dual function values estimated at the time-weighted average of dual iterates. Reference~\cite{doan2017distributed} gives a gradient balancing protocol to solve \eqref{eq:central_problem} in which $\rho_i(\cdot)=0$, {$g_i(\xi_i)=\xi_i-r_i$} and $\cK=\{{\bf 0}\}$. The authors show that the generated sequence $\bxi^k=[\xi_i^k]_{i\in\cN}$ satisfies $\sum_{i\in\cN}f_i(\xi_i^k) - \varphi^*\leq \cO(1/k)$ and is feasible for all $k$ under the assumption that the initial point $\bxi^0=[\xi_i^0]_{i\in\cN}$ is feasible \sa{-- $\varphi^*$ denotes the optimal value}; moreover, a linear rate is established when each $f_i$ is strongly convex. For a similar formulation as in~\cite{doan2017distributed}, an asynchronous gradient-descent method is proposed in~\cite{lakshmanan2008decentralized} for time-varying undirected communication networks; the proposed algorithm produces a feasible iterate sequence such that {$\min_{\ell=1,\ldots,k}\max_{i,j\in\cN}\norm{\grad f_i(\xi_i^\ell)-\grad f_i(\xi_j^\ell)}\leq\cO(1/\sqrt{k})$} 
when each $f_i$ is convex and has a Lipschitz 
gradient. However, none of these methods can solve \eqref{eq:central_problem} in its full generality over a time-varying and directed communication network.}

In~\cite{chang2015multi}, 
a method based on ADMM is proposed to reduce the computational work 
of ADMM due to exact minimizations in each iteration. First, a \emph{dual consensus} ADMM is proposed for solving~\eqref{eq:central_problem} {over an undirected static 
network} in a distributed fashion when $\cK=\{\mathbf{0}\}$, {$g_i(\xi_i)=R_i\xi_i-r_i$}, and $\varphi_i(\xi_i)=\rho_i(\xi_i)+f_i(A_i\xi_i)$ for $\rho_i$ and $f_i$ as in~\eqref{eq:F_i}.
To avoid exact minimizations in ADMM, an inexact variant taking proximal-gradient steps is analyzed. Convergence of primal-dual sequence is shown when each $f_i$ is strongly convex -- without a rate result; and a linear rate is established in the absence of the non-smooth $\rho_i$, i.e., $\varphi_i(\xi_i)=f_i(A_i\xi_i)$, and assuming each $A_i$ has full column-rank and $f_i$ is strongly convex, i.e., $\varphi_i$ is strongly convex.

{In~\cite{chang2014proximal}, a proximal \emph{dual consensus} ADMM method, PDC-ADMM,
is proposed by Chang to minimize $\sum_{i\in\cN}\varphi_i$ subject to coupling equality and agent-specific constraints over both static and time-varying \emph{undirected} networks -- for the time-varying topology, they assumed that agents are on/off and communication links fail randomly with certain probabilities.
{The goal in the paper} is to solve $\min_{\bxi}\{\sum_i\varphi_i(\xi_i):\ \sum_{i\in\cN}R_i\xi_i=r,~\xi_i\in\cX_i,~i\in\cN\}$ where $\varphi_i$ is closed convex, $\cX_i=\{\xi_i\in\cS_i:\ C_i\xi_i\leq d_i\}$ and $\cS_i$ is a convex compact set for each $i\in\cN$. The polyhedral constraints {$\xi_i\in\cX_i$} are handled using a penalty formulation without requiring projections onto them. 
It is shown that both for static and time-varying cases, PDC-ADMM have $\cO(1/k)$ ergodic convergence rate in the mean for suboptimality and infeasibility; that said, in each iteration, costly \emph{exact} minimizations involving $\varphi_i$ are needed. To alleviate this burden, Chang also proposed an inexact PDC-ADMM taking prox-gradient steps when \nv{$\varphi_i(\xi_i)=\rho_i(\xi_i)+f_i(A_i \xi_i)$ and 
$A_i$ is a linear map for each $i\in\cN$,}
and showed $\cO(1/k)$ ergodic convergence rate when each $f_i$ is \emph{strongly convex} and differentiable with a Lipschitz continuous gradient for $i\in\cN$.}

In~\cite{chang2014distributed}, a consensus-based distributed primal-dual perturbation (PDP) algorithm using a diminishing step-size sequence is proposed. The objective is to minimize a composition of a global network function (smooth) with the sum of local objective functions (smooth), i.e., $\cF(\sum_{i\in\cN}f_i(x))$, subject to local compact sets and an inequality constraint, $\sum_{i\in\cN}g_i(x)\leq 0$, over a time-varying directed network. It is shown that the 
primal-dual iterate sequence converges to an 
optimal primal-dual solution; however, no rate result is provided.
\newpage
There are fewer 
papers on resource allocation over time-varying directed networks. 
\cite{gu2018distributed} considers a special case of \eqref{eq:central_problem} with $\cK=\{\mathbf{0}\}$, $g_i(\xi_i)=\xi_i-r_i$, $f_i$ is convex, and $\rho_i(\xi_i)=\ind{\cX_i}(\xi_i)$ where $\cX_i$ is convex and compact for $i\in\cN$. Assuming a Slater point exists which implies boundedness of dual optimal set, the authors proved $\cO(\log(k)/\sqrt{k})$ 
rate result. Reference~\cite{xu2017distributed} has the same setting in~\cite{gu2018distributed} with $\cX_i=[\underline{\xi}_i,\bar{\xi}_i]$. Assuming each $f_i$ is smooth and strongly convex,
a distributed method is proposed and its convergence is shown without providing a rate result.
Finally, while we were preparing this paper, we became aware of a recent work~\cite{mateos2017distributed,nedic2017improved}. \cite{mateos2017distributed} also uses
Fenchel conjugation and
\emph{dual consensus} formulation 
to decompose separable constraints. 
A distributed algorithm on time-varying {\emph{balanced}\footnote{A directed graph $\cG$ is balanced when each node has equal number of in-degree and out-degree.}} directed
communication networks is proposed for solving saddle-point problems subject to consensus constraints.
Assuming each agents' local iterates and subgradient sets are uniformly bounded,
it is shown that 
the ergodic average of primal-dual sequence converges with $\mathcal{O}(1/\sqrt{k})$ rate in terms of saddle-point evaluation error; however, when the method is applied to constrained optimization problems, \emph{no} rate in terms of suboptimality and infeasibility is provided.
{The other recent work in~\cite{nedic2017improved} investigates the connection between decentralized resource allocation problem and decentralized consensus optimization problem where the objective is to minimize sum of convex functions subject to local closed convex sets and $\sum_{i\in\cN}\xi_i-r_i={\bf 0}$ over \emph{static} \emph{undirected} networks. Utilizing the mirror relationship between the optimality conditions of these problems, they proposed a method for solving the decentralized resource allocation problem and proved $o(1/{k})$ rate of convergence in terms of \emph{squared} residuals of first-order optimality conditions.} 
\subsection{Connection of PDA to ADMM}
\label{sec:connection}
Consider the following convex-concave saddle point problem:
\vspace*{-3mm}
{\small
\begin{align}\label{saddlepoint}
\min_\bx \max_\by \rho(\bx)+\fprod{T\bx,\by}-h(\by),
\end{align}}%
where $\rho$ and $h$ are closed convex functions, and $T$ is a linear map. The corresponding primal minimization problem take the form of $\min_\bx \rho(\bx)+h^*(T\bx)$, or equivalently
\vspace*{-3mm}%
{\small
\begin{align}\label{primal-min}
\min_{\bx,\bw} \rho(\bx)+h^*(\bw)\quad \st \quad T\bx-\bw=0.
\end{align}}%
Moreover, the dual of \eqref{primal-min} can be written as follows:
\vspace*{-3mm}%
{\small
\begin{align}\label{dual-min}
\min_{\bv, \by} \rho^*(\bv)+h(\by)\quad \st \quad \bv+T^\top\by=0.
\end{align}}%
In an earlier paper~\cite{chambolle2011first}, Chambolle and Pock proposed a primal-dual algorithm for solving \eqref{primal-min}. Given $\theta\in(0,1]$, in each iteration two proximal steps are computed as follows:
\vspace*{-3mm}%
{\small
\begin{subequations}
\label{alg:CP11}
\begin{align}
\by^{k+1}&\gets\prox{\nu_y h}(\by^k+\nu_y T\bz^{k}),\label{alg:CP11_y} \\
\bx^{k+1}&\gets\prox{\nu_x\rho}(\bx^k-\nu_x T^\top \by^{k+1}), \label{alg:CP11_x}\\
\bz^{k+1}&\gets\bx^{k+1}+\theta(\bx^{k+1}-\bx^k).
\label{alg:CP11_z}
\end{align}
\end{subequations}}%
For $\theta=1$, by reindexing the $\{\by^k\}_k$ iterate sequence, the iterations in~\eqref{alg:CP11} can be written as: $\bx^{k+1}=\prox{\nu_x\rho}(\bx^k-\nu_x T^\top \by^k)$ and $\by^{k+1}=\prox{\nu_y h}\big(\by^k+\nu_y T(2\bx^{k+1}-\bx^k)\big)$ -- note that this is the same iterate sequence generated by \eqref{eq:pda} implemented on \eqref{saddlepoint} when both $D_x$ and $D_y$ are chosen as $\tfrac{1}{2}\norm{\cdot}^2$. In particular, \eqref{alg:CP11} proposed in~\cite{chambolle2011first} is a special case of \eqref{eq:pda} 
proposed in~\cite{chambolle2015ergodic}.

When ADMM~\cite{eckstein1992douglas} is applied on problem \eqref{dual-min}, it generates the following iterates:
\vspace*{-3mm}%
{\small
\begin{subequations}\label{eq:ADMM}
\begin{align}
&\by^{k+1}\gets~\argmin_\by\Big\{h(\by)-\fprod{T^\top\by,\bx^k}+{c'\over 2}\norm{T^\top\by+\bv^{k}}^2\Big\}  \\
&\bv^{k+1}\gets~\argmin_\bv\Big\{\rho^*(\bv)-\fprod{\bv,\bx^k}+{c'\over 2}\norm{T^\top\by^{k+1}+\bv}^2\Big\}  \\
&\bx^{k+1}\gets~\bx^{k}-c'~(T^\top\by^{k+1}+\bv^{k+1})
\end{align}
\end{subequations}}%
for some penalty parameter $c'>0$. It is shown in~\cite{chambolle2011first} that when $T=\bI$, $\theta=1$, and $\nu_x=c'$, $\nu_y=\tfrac{1}{c'}$ for $c'>0$, the algorithm in~\eqref{alg:CP11} is equivalent to the ADMM implementation in \eqref{eq:ADMM}. 

Let $M_1$, $M_2$ be positive semidefine matrices. Alternating direction proximal method of multipliers (AD-PMM), proposed in~\cite{shefi2014rate} to solve~\eqref{primal-min}, computes the iterates as follows:
\vspace*{-2mm}%
{\small
\begin{subequations}
\label{alg:AD-PMM}
\begin{align}
\bx^{k+1}\gets~&\argmin_\bx\Big\{\rho(\bx)+{c\over 2}\norm{T\bx-\bw^k+c^{-1}\by^k}^2+{1\over 2}\norm{\bx-\bx^k}^2_{M_1}\Big\}, \\
\bw^{k+1}\gets~&\argmin_\by\Big\{h^*(\bw)+{c\over 2}\norm{T\bx^{k+1}-\bw+c^{-1}\by^k}^2+{1\over 2}\norm{\bw-\bw^k}^2_{M_2}\Big\}, \\
\by^{k+1}\gets~&\by^{k}+c~(T\bx^{k+1}-\bw^{k+1}).
\end{align}
\end{subequations}}%
Another variant of \eqref{alg:CP11} with the same convergence guarantees can be obtained by simply replacing \eqref{alg:CP11_z} with $\bz^{k+1}=\by^{k+1}+\theta(\by^{k+1}-\by^k)$ and switching the order of updates in \eqref{alg:CP11_x} and \eqref{alg:CP11_y}, i.e., $\bx^{k+1}$ is computed before $\by^{k+1}$. When $\theta=1$, the iterations for this variant can be written as: $\bx^{k+1}=\prox{\nu_x\rho}(\bx^k-\nu_x T^\top \bz^k)$, $\by^{k+1}=\prox{\nu_y h}\big(\by^k+\nu_y T\bx^{k+1}\big)$, $\bz^{k+1}=2\bx^{k+1}-\bx^k$. According to Proposition~3.1 in~\cite{shefi2014rate}, the iterates generated by this 
variant of \eqref{alg:CP11} for $\nu_y=c>0$ is equivalent to those generated by \eqref{alg:AD-PMM} when $M_1=\nu_x^{-1}\bI_n-c T^\top T$ and $M_2=\mathbf{0}$.

In~\cite{aybat2015distributed}, Aybat et al. proposed proximal-gradient ADMM (PG-ADMM) for solving
$
\min_{\bx,\bw}\Big\{h^*(\bw)+\sum_{i=1}^N\varphi_i(x_i):\ A_ix_i+B_i\bw=b_i,\quad i=1,\ldots N\Big\},
$
where $\bx=[x_i]_{i=1}^N$ for $x_i\in\reals^{n_i}$, $\varphi_i=\rho_i+f_i$ is composite convex as in~\eqref{eq:F_i} and $h$ is a closed convex function. PG-ADMM is only different from ADMM in $x_i$-subproblems where $x_i^{k+1}$ is computed by minimizing the linear approximation of the augmented Lagrangian~(AL) function after fixing $\bw$ at $\bw^k$ and linearizing the whole smooth part of the AL including $f_i$ around $\bx^k$ -- this leads to taking a prox-gradient step to compute each $x_i$ iterate. In the extreme case that $\rho_i=0$ for $i\in\cN$, PG-ADMM reduces to GADM, studied in~\cite{Ma-Zhang-EGADM-2013} and \cite{Gao-Jiang-Zhang-2014} -- GADM takes gradient steps to compute $x_i$; Gao et al.~\cite{Gao-Jiang-Zhang-2014} prove the $O(1/t)$ convergence rate for GADM. PG-ADMM has also $\cO(1/t)$ rate and it can be viewed as an extension of G-ADMM where convex $\rho_i$'s are allowed.

There is a strong connection between PG-ADMM and PDA in~\cite{chambolle2015ergodic}. For all $\bx\in\cX$, let $\Phi(\bx)=\rho(\bx)+f(\bx)$ as in~\eqref{eq:F_i} such that $\grad f$ is Lipschitz with constant $L$; implementing PG-ADMM on $\min_{\bx,\bw}\{\Phi(\bx)+h^*(\bw):\ T\bx-\bw=\mathbf{0}\}$ generates the following iterate sequence:
\vspace*{-2mm}%
{\small
\begin{subequations}\label{eq:pgadmm}
\begin{align}
\bx^{k+1}&\gets\prox{\tau\rho}\Big(\bx^k-\tau\Big[\grad f(\bx^k)+T^\top\big(\by^k+c~(T\bx^k-\bw^k)\big)\Big]\Big), \label{eq:pgadmm_x1}\\
\bw^{k+1}&\gets\argmin \Big\{h^*(\bw)+\tfrac{c}{2}\norm{T\bx^{k+1}-\bw+c^{-1}\by^k}^2\Big\}, \label{eq:pgadmm_w1}\\
\by^{k+1}&\gets\by^k+c~(T\bx^{k+1}-\bw^{k+1}). \label{eq:pgadmm_y1}
\end{align}
\end{subequations}}%
For PG-ADMM iterate sequence, the suboptimality and infeasibility converges to 0 in the ergodic sense
for any $c>0$ when $\frac{1}{\tau}\geq L+c\norm{T}^2$. Note \eqref{eq:pgadmm_x1} 
can be rewritten as $\bx^{k+1}=\prox{\tau\rho}(\bx^k-\tau[\nabla f(\bx^k)+T^\top(2\by^k-\by^{k-1})])$.
Using Moreau proximal decomposition on $\bw$-updates in \eqref{eq:pgadmm_w1}, we get
{\small
\begin{align}
\bw^{k+1}= T\bx^{k+1}+ c^{-1}\by^k-{1\over c}~\prox{c h}(\by^k+c T\bx^{k+1}). \label{eq:pgadmm_w2}
\end{align}
}%
Combining \eqref{eq:pgadmm_w2} and \eqref{eq:pgadmm_y1} shows $\by^{k+1}=\prox{c h}(\by^k+c T\bx^{k+1})$. Thus, \eqref{eq:pgadmm} can be written as
\vspace*{-2mm}%
{\small
\begin{subequations}\label{eq:pgadmm-final}
\begin{align}
\bx^{k+1}&\gets\prox{\tau\rho}\Big(\bx^k-\tau\big[\grad f(\bx^k)+T^\top(2\by^k-\by^{k-1})\big]\Big),\\
\by^{k+1}&\gets\prox{c h}(\by^k+c T\bx^{k+1}).
\end{align}
\end{subequations}
}%
The iterative scheme in \eqref{eq:pgadmm-final} is a variant of PDA iterations in~\cite{chambolle2015ergodic}. In particular, PG-ADMM as written in \eqref{eq:pgadmm-final} generates the same iterate sequence as $(\bx^{k+1},\by^{k+1})=\cP\cD_{\tau,c}(\bx^{k},\by^{k},\bx^{k+1},2\by^{k}-\by^{k-1})$ in \cite{chambolle2015ergodic} when the Bregman functions $D_x$ and $D_y$ chosen as $\tfrac{1}{2}\norm{\cdot}^2$. Moreover, one can easily prove that Theorem 1 in \cite{chambolle2015ergodic} is still true for this variant of PDA for any $\tau,c>0$ such that $(\frac{1}{\tau}-L)\frac{1}{c}\geq \norm{T}^2$.
}
\noindent \section{A Distributed Algorithm for Time-varying Network Topology}
\label{sec:dynamic}
In this section, we develop a distributed 
algorithm for solving \eqref{eq:central_problem} when the communication network topology is \emph{time-varying}, 
under the following assumption. 
\begin{assumption}
\label{assump:existence}
{A primal-dual 
solution to \eqref{eq:central_problem} exists and the duality gap is 0.}
\end{assumption}
Clearly this assumption holds if a Slater point for \eqref{eq:central_problem} exists, i.e., there exists some \nv{$\bar{\bxi}\in\relint(\dom \varphi \cap\dom g)$ such that $g(\bar{\bxi}) \in \intr(-\cK)$}. \nv{Existence of a Slater point is also assumed in many related papers, e.g., \cite{chang2014distributed,gu2018distributed,mateos2017distributed,nedic2017improved,nedic2009subgradient}}.
When $\cK=\{\mathbf{0}\}$ and $g_i(\xi)=R_i\xi-r_i$ for $i\in\cN$, Assumption~\ref{assump:existence} trivially holds if there exists some $\bar{\bxi}\in\relint(\dom \varphi)$ that is feasible, i.e., $\sum_{i\in\cN}R_i\bar{\xi}_i-r_i=\mathbf{0}$.

\nv{Since $\ind{\cK}(\cdot)=\sup_{y\in\reals^m}\{\fprod{y,~\cdot}-\sigma_{\cK}(y)\}$,} one can reformulate $\eqref{eq:central_problem}$ as
\vspace*{-1mm}%
{\small
\begin{equation}\label{eq:dual-saddle-d}
\min_{\bxi}\max_{y\in\reals^m}\Big\{\sum_{i\in \cN}{\varphi_i(\xi_i)}\nv{- \Big\langle\sum_{i\in\cN}g_i(\xi_i),~y \Big\rangle}-\sigma_{\cK}(y) \Big\}.\vspace*{-1mm}
\end{equation}}%
According to Assumption~\ref{assump:existence}, a dual optimal solution $y^*\in\cK^\circ$ exists and the duality gap is 0 for \eqref{eq:central_problem}. Suppose each node $i\in\cN$ has its own estimate $y_i\in\reals^m$ of a dual optimal solution; and ${\by}=[y_i]_{i\in\cN}$ denotes these estimates in long-vector form. We define the \emph{consensus set} as \vspace*{-2mm}
{\small
\begin{equation}
\label{eq:Cd}
\cC\triangleq\{{\by}\in \mathbb{R}^{m|\mathcal{N}|}:\ \exists \bar{y}\in\mathbb{R}^m \ \mbox{ s.t. }\ y_i=\bar{y}\quad \forall i\in\mathcal{N}\}.
\end{equation}}%

Suppose we are given a \nv{(possibly trivial) bound $B\in(0,\infty]$ such that $\norm{y^*}{\leq} B$. For instance, if a Slater point is available, then a nontrivial bound $B\in(0,\infty)$ on dual solutions can be obtained by solving a convex problem in a distributed way; on the other hand, when Slater condition holds for~\eqref{eq:central_problem} but a Slater point is not available, then the nodes can collectively compute a Slater point -- see Section~\ref{sec:dual-bound}.}
Let $\cB_0\triangleq\{y\in\reals^m:\ \norm{y}\leq {2B}\}$ and $\mathcal{B}\triangleq\Pi_{i\in\cN}\cB_0$, i.e., $\cB=\{\by: \ \|y_i\|\leq {2B},\ i\in\cN\}$. Finally, we also define the \emph{bounded consensus set}, 
{\small \begin{equation}
\label{eq:C}
\tilde{\cC}\triangleq\cC\cap \cB=\{{\by}\in \mathbb{R}^{m|\mathcal{N}|}:\ \exists \bar{y}\in\cB_0\subset\mathbb{R}^m \ \mbox{ s.t. }\ y_i=\bar{y}\quad \forall i\in\mathcal{N}\}.
\end{equation}}%

We can equivalently reformulate \eqref{eq:dual-saddle-d} as the following dual consensus problem:
\vspace*{-1mm}%
{\small
\begin{equation}\label{eq:dual-saddle-dist}
\min_{\substack{\bxi}}\max_{\substack{\by\in \Ct}}\ L(\bxi,\by)\triangleq\sum_{i\in \cN}\Big({\varphi_i(\xi_i)}-\nv{\fprod{g_i(\xi_i),~y_i }}-\sigma_{\cK}(y_i) \Big),
\end{equation}}%
{i.e., any saddle point of~\eqref{eq:dual-saddle-dist} is also a saddle point of \eqref{eq:dual-saddle-d},} which follows from the definitions of $\sigma_{\cK}(\cdot)$ and $\Ct$. Define $\cL:\reals^n\times\reals^{m|\cN|}\times\reals^{m|\cN|}\rightarrow\reals\cup\{\pm\infty\}$ such that
{\small
\begin{equation}
\label{eq:lagrangian-dual-implementation}
\cL(\bxi,\bw,\by)\triangleq\sum_{i\in \cN}\Big({\varphi_i(\xi_i)}-\nv{\fprod{g_i(\xi_i),~y_i }}-\sigma_{\cK}(y_i)\Big)-\langle \bw,{\by} \rangle +\sigma_{\Ct}(\bw)\nv{-\ind{\cB}(\by)}.
\end{equation}}%
Note that for any $\bxi\in\dom{\varphi}$, we have $\max_{\by\in \Ct}L(\bxi,\by)=\nv{\max_{\by}}\min_{\bw}\cL(\bxi,\bw,\by)$; hence, $\eqref{eq:dual-saddle-dist}$ can be equivalently written as follows:
\vspace*{-1mm}
{\small
\begin{equation}\label{eq:dual-dist-problem}
\min_{\substack{\bxi}}\big\{\nv{\max_{\substack{\by}}}\min_{\substack{\bw}}\cL(\bxi,\bw,\by)\big\}\
=\ \min_{\substack{\bxi,\bw}}\nv{\max_{\by}}\cL(\bxi,\bw,\by),
\end{equation}}%
\nv{where 
interchanging 
$\max_{\by}$ and $\min_{\bw}$ is trivially justified when $\cB$ is bounded; in case $B=+\infty$, i.e., $\cB_0=\reals^m$, one can directly verify that $\min_{\substack{\bw}}\max_{\by}\cL(\bxi,\bw,\by)=\min_{\substack{\bw}}\max_{\by}\cL(\bxi,\bw,\by)$ and is equal to $\varphi(\bxi)$ if $g(\bxi)\in-\cK$, and $+\infty$ otherwise.} 

Since we can equivalently solve 
$\min_{\substack{\bxi,\bw}}\max_{\by}\cL(\bxi,\bw,\by)$ in~\eqref{eq:dual-dist-problem} to solve \eqref{eq:central_problem}, we next \sa{generalize} PDA iterations in \eqref{eq:PDA-x}-\eqref{eq:PDA-y} to solve this 
saddle-point problem.
\begin{defn}
\label{def:bregman}
Let $\cX\triangleq\Pi_{i\in\cN}\reals^{n_i}\times\Pi_{i\in\cN}\reals^{m}$ and $\cX\ni\bx=[\bxi^\top \bw^\top]^\top$ for $\bxi=[\xi_i]_{i\in\cN}\in\reals^n$ and {$\bw\in\reals^{n_0}$}, where $n\triangleq\sum_{i\in\cN}n_i$ and $n_0\triangleq m|\cN|$; let $\cY\triangleq\Pi_{i\in\cN}\reals^{m}$ and $\cY\ni\by=[y_i]_{i\in\cN}\in\reals^{n_0}$. Given parameters $\gamma>0$, and $\tau_i,\kappa_i>0$ for $i\in\cN$, let $\mathbf{D}_\gamma\triangleq\frac{1}{\gamma}\id_{n_0}$, $\mathbf{D}_\tau\triangleq\diag([\frac{1}{\tau_i}\id_{n_i}]_{i\in\cN})$, and $\mathbf{D}_\kappa\triangleq\diag([\frac{1}{\kappa_i}\id_{m}]_{i\in\cN})$. Defining $\psi_x(\bx)\triangleq\frac{1}{2}\bxi^\top\mathbf{D}_\tau\bxi+\frac{1}{2}\bw^\top\mathbf{D}_\gamma\bw$ and $\psi_y(\by)\triangleq\frac{1}{2}\by^\top\mathbf{D}_\kappa\by$ leads to the following Bregman distance functions: $D_x(\bx,\bar{\bx})=\frac{1}{2}\norm{\bxi-\bar{\bxi}}_{\mathbf{D}_\tau}^2+\frac{1}{2}\norm{\bw-\bar{\bw}}_{\mathbf{D}_\gamma}^2$, and $D_y(\by,\bar{\by})=\frac{1}{2}\norm{\by-\bar{\by}}_{\mathbf{D}_\kappa}^2$. To simplify notation, also define $\cZ\triangleq\cX\times\cY$ and $\cZ\ni\bz=[\bx^\top \by^\top]^\top$.
\end{defn}
\begin{defn}
\label{def:saddle-definitions}
Suppose $\varphi_i=\rho_i+f_i$ is a composite convex function defined as in~\eqref{eq:F_i} for $i\in\cN$. Let $\Phi(\bx)\triangleq\rho(\bx)+f(\bx)$ and $h(\by)\triangleq\sum_{i\in\cN}h_i(y_i)$ for all $\by\in\cY$, where $\rho(\bx)\triangleq \sigma_{\Ct}(\bw)+\sum_{i\in\cN}\rho_i(\xi_i)$, $f(\bx)\triangleq\sum_{i\in\cN}f_i(\xi_i)$ and \nv{$h_i(y_i)\triangleq \sigma_{\mathcal{K}}(y_i)+\ind{\mathcal{B}_0}(y_i)$ for $i\in\cN$. Let $G:\reals^n\rightarrow\reals^{n_0}$ such that $G(\bxi)\triangleq [g_i({\xi_i})]_{i\in\cN}$ for all $\bx\in\cX$ and define $T:\reals^n\times\reals^{n_0}\rightarrow \reals^{n_0}$ such that
$T(\bx)\triangleq -G(\bxi) -\bw$; hence, $\bJ T(\bx)= [-\bJ G(\bxi)~-\id_{n_0}]$.}
\end{defn}
With the aim of solving \eqref{eq:central_problem} as an 
SP problem, let $\Phi$, $h$, and $T$ be as given in Definition~\ref{def:saddle-definitions}, and consider
$\min_{\bx\in\cX}\max_{\by\in\cY}\Phi(\bx)+\fprod{\nv{T(\bx)},\by}-h(\by)$. 
Hence, given the initial iterates $\bxi^0,\bw^0,\by^0$ and parameters $\gamma>0$, $\tau_i,\kappa_i>0$ for $i\in\cN$, choosing Bregman functions $D_x$ and $D_y$ as in Definition~\ref{def:bregman}, and setting $\nu_x=\nu_y=1$, \nv{we propose a modified version of PDA iterations to handle nonlinear $T(\cdot)$; indeed, after linearizing $T(\bx)$ around $\bx^k$ in \eqref{eq:PDA-x}, the iterations in \eqref{eq:pda} can be written 
as follows for $k\geq 0$:}
\vspace*{-2mm}%
{\small
\begin{subequations}
\label{eq:pock-pd-3}
\begin{align}
&\bw^{k+1}\gets\argmin_{\bw} \sigma_{\Ct}(\bw)-\langle {\by}^k,~\bw \rangle +{1\over 2\gamma}\|\bw-\bv^k\|^2, \label{eq:pock-pd-3-v}\\
&\bv^{k+1}\gets\bw^{k+1}, \label{eq:pock-pd-v_exact}\\
&\xi_i^{k+1}\gets\argmin_{\xi_i} \rho_i(\xi_i) 
+\langle\nabla f_i(\xi_i^k)-\nv{\bJ g_i(\xi_i^k)^\top y_i^k},\xi_i-\xi_i^k \rangle +{1\over 2\tau_i}\|\xi_i-\xi_i^k\|^2,\ i\in\cN, \label{eq:pock-pd-3-xi}\\
&y_i^{k+1}\gets\argmin_{y_i\in\cK^\circ\cap\cB_0} \nv{\langle 2g_i(\xi_i^{k+1})-g_i(\xi_i^k)+2v_i^{k+1}-v_i^k,~y_i\rangle}
+{1\over 2\kappa_i}\|y_i-y_i^k\|^2,\ i\in\cN,  \label{eq:pock-pd-3-y}
\end{align}
\end{subequations}
}%
where we initialize $\bv^0=\bw^0$. The reason we introduced an auxiliary sequence $\{\bv^k\}_{k\geq 0}$ such that $\bv^k=[v_i^k]_{i\in\cN}$ will be explained 
shortly. Briefly, in its currently stated form, \nv{the computation in \eqref{eq:pock-pd-3} 
can be considered as linearized PDA iterations -- $T(\cdot)$ in \eqref{eq:PDA-x}-\eqref{eq:PDA-y} is linearized around $\bx^k$}; however, this naive scheme is not suitable for our purposes, i.e., the $\bw^{k+1}$ update in~\eqref{eq:pock-pd-3-v} is not practical to be computed in a \emph{distributed} manner. Therefore, instead of setting $\bv^{k+1}$ to $\bw^{k+1}$, we will replace \eqref{eq:pock-pd-v_exact} and assign $\bv^{k+1}$ to an approximation of $\bw^{k+1}$ such that this approximation can be efficiently computed in a distributed way -- this modified version of \eqref{eq:pock-pd-3} will be analyzed as \nv{an \emph{inexact} variant of \emph{linearized} PDA.}

Using the extended Moreau decomposition for proximal operators, {for $k\geq 0$,} 
\vspace*{-2mm}%
{\small
\begin{align}\label{eq:exact-w-computation}
\bw^{k+1}&=\argmin_{\bw} \sigma_{\Ct}(\bw)+{1\over 2\gamma}\norm{\bw-(\bv^k+{\gamma}\by^k)}^2=\prox{\gamma\sigma_{\Ct}}(\bv^k+\gamma\by^k),\nonumber\\
&=\gamma\Big[\frac{1}{\gamma}\bv^k+\by^k-\mathcal{P}_{\Ct}\big( \frac{1}{\gamma}\bv^k+\by^k\big)\Big].
\end{align}
}%
For an arbitrary $\by=[y_i]_{i\in\cN}\in \mathbb{R}^{n_0}$, $\mathcal{P}_{\Ct}(\by)$ can be computed as
$
\mathcal{P}_{\Ct}(\by)=\one\otimes \argmin_{x\in\cB_0} \sum_{i\in\mathcal{N}}\|x-y_i\|^2=\one\otimes\argmin_{x\in\cB_0}\|x-{1\over |\mathcal{N}|}\sum_{i\in\mathcal{N}}y_i\|^2,
$
where $\one\in\reals^{|\cN|}$ denotes the vector of all ones. Hence, we can write
$\mathcal{P}_{\Ct}(\by)=\mathcal{P}_{\mathcal{B}}\left((W\otimes\id_m)\by\right)$, where $W\triangleq\frac{1}{|\cN|}\one\one^\top\in\reals^{|\cN|\times|\cN|}.$
Equivalently,
\vspace*{-3mm}%
{\small
\begin{equation}
\label{eq:proj-2}
\mathcal{P}_{\Ct}(\by)=\mathcal{P}_{\mathcal{B}}\left(\one\otimes \nv{p(\by)}\right),\quad \hbox{\normalsize where}\quad \nv{p(\by)}\triangleq{1\over |\mathcal{N}|}\sum_{i\in\mathcal{N}}y_i.
\end{equation}
}%
\nv{Note that $\mathcal{P}_{\mathcal{B}}(\by)=\by$ for all $\by\in\cY$ when $B=\infty$;} and for $B<\infty$, $\cP_{\cB}(\cdot)$ is easy to compute locally since $\cB=\Pi_{i\in\cN}\cB_0$ and $\cP_{\cB_0}(y)=y\min\{1,~{2B/\norm{y}}\}$ for $y\in\reals^m$. Furthermore, $\bxi$-step and $\by$-step of the PDA implementation in \eqref{eq:pock-pd-3} can also be computed locally at each node; however, computing $\bw^{k+1}$ requires communication among the nodes. Indeed, evaluating the average operator \nv{${p}(\cdot)$} is \emph{not} a simple operation in a decentralized computational setting which only allows for communication among neighboring nodes {-- see Assumption~\ref{assump:communication_general}}. 
To overcome the issue with decentralized computation of the \nv{averaging operator \eyy{${p}(\cdot)$}}, we will use \emph{multi-communication rounds} to approximate \nv{${p}(\cdot)$}, and analyze the resulting primal-dual iterations as an \emph{inexact} primal-dual algorithm. In~\cite{chen2012fast}, the idea of using \emph{multi-communication rounds} has also been exploited within a distributed primal algorithm for \emph{unconstrained} consensus optimization problems over \emph{undirected} communication networks. 

We define a \emph{communication round} as an operation over $\cG^t$ such that every node simultaneously sends and receives data to and from its neighboring nodes according to Assumption~\ref{assump:communication_general} 
-- the details of this operation will be discussed shortly. 
We assume that communication among neighbors occurs \emph{instantaneously}, and nodes operate \emph{synchronously}; and we further assume that for each iteration $k\geq 0$, there exists an approximate averaging operator $\cR^k(\cdot)$ which can be computed in a decentralized fashion and approximates $\mathcal{P}_{\Ct}(\cdot)$ with decreasing approximation error in $k$.
\begin{assumption}
\label{assump:approximate-average}
Given a time-varying network $\{\cG^t\}_{t\in\reals_+}$ such that $\cG^t=(\cN,\cE^t)$ for $t\geq 0$, suppose that there is a global clock known to all $i\in\cN$. Assume that the local operations in \eqref{eq:pock-pd-3-xi} and \eqref{eq:pock-pd-3-y} can be completed between two tics of the clock for all $i\in\cN$ and $k\geq 1$, and  every time the clock ticks a communication round with instantaneous messaging between neighboring nodes takes place subject to Assumption~\ref{assump:communication_general}. Suppose that for each $k\geq 0$ there exists $\cR^k(\cdot)=[\cR_i^k(\cdot)]_{i\in\cN}$ such that $\cR_i^k(\cdot)$ can be computed with local information available to node $i\in\cN$ and decentralized computation of $\cR^k$ requires $q_k$ communication rounds. Furthermore, we assume that there exist \nv{$\Gamma>0$} and $\alpha\in (0,1)$ such that \nv{$N\Gamma\geq 1$} and for all $k\geq 0$, $\cR^k$ satisfies
\vspace*{-1mm}%
{\small
\begin{align} \label{eq:approx_error-for-full-vector}
{\mathcal{R}^k(\bw)\in\cB,\qquad \|\mathcal{R}^k(\bw)-\mathcal{P}_{\Ct}(\bw)\| \leq N~\Gamma \alpha^{q_k}\norm{\bw}},\quad\forall~\bw\in\reals^{n_0}.
\end{align}}
\end{assumption}
The ``unit time" is defined to be the length of the interval between two tics of the clock. The assumption that every node $i\in\cN$ can finish its $\xi_i$ and $y_i$ updates 
in one unit time is mainly for the sake of notational simplicity throughout the analysis. All of our results still hold as long as there exists a uniform bound $\Delta\in\mathbb{Z}_+$ such that the local operations in \eqref{eq:pock-pd-3-xi} and \eqref{eq:pock-pd-3-y} can be completed in $\Delta$ unit time for all $i\in\cN$ and $k\geq 1$. In the rest, we assume that $\Delta=1$ as in Assumption~\ref{assump:approximate-average}.

Consider the $k$-th iteration of PDA as shown in \eqref{eq:pock-pd-3}. Instead of setting $\bv^{k+1}$ to $\bw^{k+1}$ as in \eqref{eq:pock-pd-v_exact}, we propose approximating $\bw^{k+1}$ using the inexact averaging operator $\cR^k(\cdot)=[\cR_i^k(\cdot)]_{i\in\cN}$ of Assumption~\ref{assump:approximate-average} and set $\bv^{k+1}$ to this approximation. This way, we can skip \eqref{eq:pock-pd-3-v} step and avoid explicitly computing $\bw^{k+1}$ as in \eqref{eq:exact-w-computation} which requires using the exact averaging to compute $\mathcal{P}_{\Ct}(\cdot)$.
More precisely, to obtain an \emph{inexact} variant of \eqref{eq:pock-pd-3}, we replace \eqref{eq:pock-pd-v_exact} with the following:
\vspace*{-1mm}%
{\small
\begin{equation}\label{eq:inexact-rule-dual}
{\bv}^{k+1}\gets\gamma\left[\tfrac{1}{\gamma}{\bv}^k+\by^k-\cR^k\left(\tfrac{1}{\gamma}{\bv}^k+\by^k\right)\right].
\end{equation}
}%
Thus, PDA iterations in~\eqref{eq:pock-pd-3}, for solving the saddle-point formulation, \nv{$\min_{\bxi,\bw}\max_{\by}$ $\cL(\bxi,\bw,\by)$}, of the distributed resource allocation problem in \eqref{eq:central_problem}, can be computed inexactly, but in \emph{decentralized} way for a time-varying connectivity network $\{\cG^t\}_{t\geq 0}$ provided that $\cR^k$ satisfying Assumption~\ref{assump:approximate-average} exists for $\{\cG^t\}_{t\geq 0}$. We call this inexact version of the \nv{linearized} PDA as Algorithm~DPDA-D and the node-specific computations of DPDA-D are displayed in Fig.~\ref{alg:PDDual} below. {Indeed, the iterate sequence $\{\bxi^k,\bv^k,\by^k\}_{k\geq 0}$ generated by Algorithm~DPDA-D 
is the same sequence generated by the recursion in \eqref{eq:inexact-rule-dual}, \eqref{eq:pock-pd-3-xi}, and \eqref{eq:pock-pd-3-y}. As emphasized previously, the sequence $\{\bw^k\}_{k\geq 0}$ will not be explicitly computed, instead we will use it in the analysis of the inexact algorithm.} Next, we discuss the existence of inexact average operators $\cR^k$ satisfying Assumption~\ref{assump:approximate-average} under various assumptions on time-varying network $\{\cG^t\}_{t\geq 0}$.
\begin{figure}[htpb]
\vspace*{-4mm}%
\centering
\framebox{\parbox{0.98\columnwidth}{
{\small
\textbf{Algorithm DPDA-D} ( $\bxi^{0},\gamma,\{\tau_i,\kappa_i\}_{i\in\cN}$ ) \\[1.5mm]
Initialization: $v_i^0\gets\mathbf{0}$,\quad $y_i^0\gets\mathbf{0}$ \quad $i\in\cN$\\
Iteration $k$: ($k \geq 0$)\\
\text{ } 1. $v^c_i\gets \tfrac{1}{\gamma}v_i^k+y_i^k,\qquad v_i^{k+1}\gets \gamma v_i^c-\gamma \cR^k_i({\bv^c}), \quad i \in \cN,\quad \hbox{where}\quad \bv^c=[v_i^c]_{i\in\cN}$\\
\text{ } 2. $\xi_i^{k+1}\gets\prox{\tau_i\rho_i}\bigg(\xi_i^k-\tau_i\bigg(\nabla f_i(\xi_i^k)-\nv{\bJ g_i(\xi_i^k)^\top y^k_i}\bigg)\bigg),\quad i\in\cN$\\
\text{ } 3. $y_i^{k+1}\gets\cP_{\cK^\circ\cap\cB_0}\bigg(y_i^k-\kappa_i\Big(\nv{2g_i(\xi_i^{k+1})-g_i(\xi_i^k)}+{(2v_i^{k+1}-v_i^k)}\Big)\bigg), \quad i \in \cN$\\
 }\vspace*{-2mm}}}
 \vspace*{-2mm}
\caption{\small Distributed Primal-Dual Algorithm for Time-Varying $\{\cG^t\}$ (DPDA-D)}
\label{alg:PDDual}
\end{figure}
\vspace*{-4mm}%
\subsection{Inexact averaging operators}
\label{sec:inexact-averaging}
Let $t_k\in\integers_{+}$ be the total number of \emph{communication rounds} done before the $k$-th iteration of DPDA-D, 
in Figure~\ref{alg:PDDual}, and let $q_k\in\integers_{+}$ be the number of communication rounds to be performed within the $k$-th iteration while evaluating $\cR^k$. 
According to Assumption~\ref{assump:approximate-average}, each node $i\in\cN$ can finish $\xi_i^{k+1}$ and $y_i^{k+1}$ computation within \emph{one} unit time, i.e., between two consecutive tics of the clock, for all $k\geq 0$, and communication rounds occur every time the global clock tics; hence, $\cG^t$ represents the connectivity network at the time of $t$-th communication round for all $t\in\integers_+$. Thus, only $\{\cG^t\}_{t\in\integers_+}$ among $\{\cG^t\}_{t\in\reals_+}$ is relevant since the topology of the time-varying network is only pertinent at those times when communication happens among neighboring nodes. \nv{For implementation in practice, it is sufficient for each node to count the number of global clock tics since the last update.}
\begin{defn}
Let $V^t\in\reals^{|\cN|\times|\cN|}$ be a matrix encoding the topology of $\cG^t=(\cN,\cE^t)$ in some way for $t\in\integers_+$. We define $W^{t,s}\triangleq V^tV^{t-1}...V^{s+1}$ for any $t,s\in\integers_+$ such that $t\geq s+1$.
\end{defn}
\nv{Let $\{\cG^t\}$ be a time-varying directed graph;
we adopt the information exchange model in \cite{nedic2009distributed-2} satisfying the assumptions stated in Assumption~\ref{assump:undirected}.} 
\begin{assumption}\label{assump:undirected}
\nv{
For all $t\in\integers_+$: \textbf{(i)} every $i\in\cN$ knows $\cN^{\,t,{\rm out}}_i$ and there exists $\zeta\in(0,1)$ such that for 
$i\in\mathcal{N}$, $V^t_{ij}\geq \zeta$ if $j\in\cN_i^{\,t,{\rm in}}$, and $V^t_{ij}=0$ otherwise. \textbf{(ii)} 
$\cG^t$ is $M$-strongly-connected, i.e., there exist an $\integers \ni M{\geq}1$ (possibly unknown to nodes) such that the graph with edge set $\cE_M^k=\bigcup_{t=kM}^{(k+1)M-1}\cE^t$ is strongly connected for 
$k\in\integers_+$.}
\end{assumption}
\subsubsection{\bf Undirected $\{\cG^t\}_{t\in\integers_+}$}
\label{sec:undirected}
Let $\{\cG^t\}$ be a time-varying undirected graph; $\cN_i^t$ is defined as in Definition~\ref{def:neighbors}, and $d_i^t=|\cN_i^t|$ for $i\in\cN$. For undirected case, we assume 
$\{V^t\}_{t\in\integers_+}$ is \emph{doubly stochastic} and satisfies Assumption~\ref{assump:undirected}.
For instance, $V^t$ can be set as the Metropolis edge weight matrix~\cite{boyd2004fastest} corresponding to $\cG^t$, i.e., for each $i\in\cN$ set $V^t_{ij}=(\max\{d_i^t,~d_j^t\}+1)^{-1}$ for $j\in\cN^t_i$, $V^t_{ij}=0$ for $j\not\in\cN_i^t\cup\{i\}$ and $V_{ii}^t=1-\sum_{j\in\cN^t_i}V_{ij}^t$. Suppose that there exists $d_{\max}$ such that $d_i^t\leq d_{\max}$ for all $i\in\cN$ and $t\in\integers_+$. Under this assumption, it is trivial to check $\zeta=(d_{\max}+1)^{-1}$.

For 
$V^t$ satisfying (\textbf{\emph{i}}) in Assumption~\ref{assump:undirected}, given any $w\in\reals^{|\cN|}$, the matrix-vector multiplication $V^tw\in\reals^{|\cN|}$ can be computed in a distributed way, i.e., the $i$-th component $(V^tw)_i=\sum_{j\in\cN_i\cup\{i\}}V^t_{ij}w_j$ can be computed at node $i\in\cN$ requiring only local communication of $i$ with nodes in $\cN_i^t$. \nv{The next result shows how this distributed operation can be used to approximate the average -- also see~\cite{nedic2009distributed}.}
\nv{\begin{lemma}\label{lem:approximation}
Let $\{V^t\}_{t\in\integers_+}$ be a sequence of \emph{doubly stochastic} matrices satisfying Assumption~\ref{assump:undirected}.
For any $s,t\in\integers_+$ such that $t\geq s$,
$\norm{(W^{t,s}\otimes\id_m)\bw-\one\otimes p(\bw)}\leq \tfrac{8}{7}\alpha^{t-s}\norm{\bw}$ for any $\bw=[w_i]_{i\in\cN}\in\reals^{n_0}$,
where $\alpha=(1-\frac{\zeta}{2N^2})^{\frac{1}{2M}}$.
\end{lemma}}
\begin{proof}
\nv{The proof immediately follows from Lemma 5 in~\cite{nedic2009distributed-2}.}
\end{proof}
For $\bw=[w_i]_{i\in\cN}\in\reals^{n_0}$ such that $w_i\in\reals^m$ for $i\in\cN$, define
\vspace*{-1mm}%
{\small
\begin{equation}
\label{eq:approx-average-dual}
\cR^k(\bw)\triangleq\mathcal{P}_{\mathcal{B}}\left((W^{t_k+q_k,t_k}\otimes\id_m)~\bw\right)
\end{equation}}%
to approximate $\mathcal{P}_{\Ct}(\cdot)$ in \eqref{eq:exact-w-computation}. Note that $\mathcal{R}^k(\cdot)$ can be computed in a \emph{distributed fashion} requiring $q_k$ communications with the neighbors for each node.
In particular, components of $\cR^k(\bw)$ can be computed at each node as
$\cR^k(\bw)=[\cR_i^k(\bw)]_{i\in\cN}$ such that $\cR_i^k(\bw)\triangleq\cP_{\cB_0}\Big(\sum_{j\in\cN_i\cup\{i\}}W^{t_k+q_k,t_k}_{ij}w_j\Big)$.
Moreover, the approximation error, 
$\mathcal{R}^k(\bw)-\mathcal{P}_{{\Ct}}(\bw)$, for any $\bw$ can be bounded as in 
\eqref{eq:approx_error-for-full-vector} using the non-expansivity of $\cP_{\cB}$ and Lemma~\ref{lem:approximation}. 
More precisely, $\cR^k$ defined in \eqref{eq:approx-average-dual} satisfies Assumption~\ref{assump:approximate-average}.
\subsubsection{\bf Directed $\{\cG^t\}_{t\in\integers_+}$}
\label{sec:directed}
Let $\{\cG^t\}$ be a time-varying directed graph, and $\cN^{\,t,{\rm in}}_i$, $\cN^{\,t,{\rm out}}_i$ be defined as in Definition~\ref{def:neighbors} for $i\in\cN$. \nv{Recall $d_i^t=|\cN^{\,t,{\rm out}}_i|-1$.} Since the definition of $\Ct$ in \eqref{eq:C} does not depend on the topology of the network, using the push-sum protocol~\cite{kempe2003gossip} within DPDA-D, one can also handle time-varying \emph{directed} communication networks. Indeed, given any $\bw=[w_i]_{i\in\cN}$, nodes can inexactly compute $\mathcal{P}_{\Ct}(\bw)$ in a distributed fashion 
with increasing approximation quality; 
consider the weight-matrix sequence $\{V^t\}_{t\in\integers_+}$: for any $t\geq 0$, 
\vspace*{-1mm}
{\small
\begin{align}
\label{eq:directed-weights}
V^t_{ij}={1\over \nv{d_j^t+1}}\ \hbox{ if }\ j\in\cN^{\,t,{\rm in}}_i;\quad V^t_{ij}=0\ \hbox{ if }\ j\not\in\cN^{\,t,{\rm in}}_i,\quad i\in\cN.
\end{align}
}
For $\bw=[w_i]_{i\in\cN}\in\reals^{n_0}$ such that $w_i\in\reals^m$ for $i\in\cN$, define
{\small
\begin{equation}
\label{eq:approx-average-dual-directed}
\cR^k(\bw)\triangleq\mathcal{P}_{\mathcal{B}}\left({\diag(W^{t_k+q_k,t_k}\ones_N {\otimes\id_m})^{-1}}~(W^{t_k+q_k,t_k}\otimes\id_m)~\bw\right)
\end{equation}
}%
to approximate $\mathcal{P}_{\Ct}(\cdot)$ in \eqref{eq:exact-w-computation}. $\mathcal{R}^k(\cdot)$ can be computed in a \emph{distributed fashion} requiring $q_k$ communication rounds, and \nv{is a compact representation of push-sum operation.}
\begin{lemma}
\label{lem:push-sum}
Consider $\cR^k$ defined in \eqref{eq:approx-average-dual-directed} for $k\geq 0$. 
Assuming 
$\{\cG^t\}_{t\in\integers_+}$ is uniformly strongly connected ($M$-strongly connected), \eqref{eq:approx_error-for-full-vector} holds for some
$\Gamma >0$ and $\alpha\in(0,1)$ such that
$\Gamma\leq {8 N^{NM}}$ and $\alpha\leq \left(1-{1\over N^{NM}}\right)^{1\over M}$.
\end{lemma}
\begin{proof}
The result follows from the proof Lemma~1 in~\cite{nedic2015distributed}. 
\end{proof}
\section{Convergence of Algorithm DPDA-D}
\label{sec:convergence}
\sa{Define $\bar{C}_g\triangleq\sum_{i\in\cN}C_{g_i}/N$ and $\bar{R}_x\triangleq \max\{\norm{\bxi^*-\bxi^0}_{\bL_g},\norm{\bxi^*-\bxi^0}_{\bL'}\}/\sqrt{N}$, where $\bL'\triangleq\diag([(1+L_{f_i}+C_{g_i})\id_{n_i}]_{i\in\cN})$ and $\bL_g\triangleq\diag([(L_{g_i})\id_{n_i}]_{i\in\cN})$.}
\begin{theorem}\label{col:dual-error-bound}
\nv{{Suppose Assumptions~\ref{assum:functions}, \ref{assump:communication_general}, \ref{assump:existence} and~\ref{assump:approximate-average} hold}. For any $\gamma>0$, let the primal-dual step-sizes $\{\tau_i,\kappa_i\}_{i\in\cN}$ be chosen such that for some $\beta>0$,
\nv{\small
\begin{equation}\label{eq:step-rule-dual}
\tau_i=(\max\{1, L_{f_i}+\beta L_{g_i}\}+C_{g_i})^{-1},\quad \kappa_i=(C_{g_i}+\frac{5\gamma}{2})^{-1},\quad \forall\ i\in\cN.
\end{equation}}%
\nv{Given $B\in(0,\infty]$,} starting from $\bv^0=\by^0=\mathbf{0}$ and an arbitrary $\bxi^0$, let $\{(\bxi^k,\bv^k)\}_{k\geq 0}$ be the primal, and $\{\by^k\}_{k\geq 0}$ be the dual iterate sequence generated by Algorithm DPDA-D, displayed in Fig.~\ref{alg:PDDual}, using $q_k\in \integers_+$ communication rounds for the $k$-th 
iteration such that \sa{$C_0\triangleq\sum_{k=0}^\infty \alpha^{q_{k}}(k+1) < \infty$}.
For any $\gamma>0$, if $\beta>0$ is chosen as discussed below, then $\{({\bxi}^k,{\by}^k)\}_{k\geq 0}$ converges to $(\bxi^*,\by^*)$ such that $\by^*=\one \otimes y^*$ and $(\bxi^*,y^*)$ is {an optimal} primal-dual solution to \eqref{eq:central_problem}. Moreover,
both infeasibility, $F(\bar{\bxi}^K,\bar{\by}^K)$, and suboptimality, $|\varphi(\bar{\bxi}^K)-\varphi({\bxi}^*)|$ are $\cO(1/K)$, i.e., for all $K\geq 1$:
\nv{\small
\begin{eqnarray}
&&F(\bar{\bxi}^K,\bar{\by}^K)\triangleq d_{\cC}(\bar{\by}^K)+\norm{y^*}d_{\mathcal{K}}\left(- g(\bar{\bxi}^K)\right)\leq \frac{\Lambda(\gamma,\beta)}{K}, \label{eq:infeasibility}\\
&&0\leq \varphi(\bar{\bxi}^K)-\varphi({\bxi}^*)+\norm{y^*} d_{\mathcal{K}}\left(-g(\bar{\bxi}^K)\right)\leq \frac{\Lambda(\gamma,\beta)}{K}-F(\bar{\bxi}^K,\bar{\by}^K),\label{eq:suboptimality}
\end{eqnarray}
}%
for some $\Lambda(\gamma,\beta)\in\reals_+$, where $\bar{\bxi}^K={1\over K}\sum_{k=1}^K\bxi^k$ and $\bar{\by}^K={1\over K}\sum_{k=1}^K\by^k$ for $K\geq 1$.} 

\nv{(CASE 1): If a dual bound is known, i.e., $B<\infty$, then \eqref{eq:infeasibility} and \eqref{eq:suboptimality} hold for $\beta=2B$; moreover, setting the free parameter {$\gamma=(N^{3/2}\Gamma C_0 B)^{-1}$} gives 
{\small
\begin{align}
\label{eq:Lambda_Bknown}
\Lambda(\gamma,\beta)=\cO\Big(NB(\bar{R}_x^2+\bar{C}_g B)+N^{3/2}\Gamma C_0 B\Big).
\end{align}}%
\indent (CASE 2): If the dual bound does not exist, set $B=\infty$ within DPDA-D. Assuming $q_k\geq \log_{1/\alpha}(24 N\Gamma(k+1))$ for $k\geq 0$, there exists $\bar{\beta}>0$ such that \eqref{eq:infeasibility} and \eqref{eq:suboptimality} hold for all $\beta\geq \bar{\beta}$; moreover, selecting \sa{$\gamma=N^{\frac{3}{2}}\Gamma {C_0}\bar{R}_x^2$} gives $\Lambda(\gamma,\beta)=\cO(N^{\frac{9}{2}}\Gamma^3 {C_0^3}\bar{R}_x^2\sa{\max\{1,\norm{y^*}^2\}})$. Finally, when $g_i$ is affine 
for $i\in\cN$ and $\{\tau_i\}$ are independent of $\beta$, \sa{$\gamma=(N^{\frac{3}{2}}\Gamma {C_0})^{-1}$} leads to $\Lambda(\gamma,\beta)=\cO(N^3\Gamma^2 {C_0^2}(\bar{R}_x^2+\bar{C}_g\sa{\max\{1,\norm{y^*}^2\}}))$.}
\end{theorem}
\begin{remark}
We assume agents know $q_k$ as a function of $k$ at the initialization; hence, synchronicity can be achieved among nodes if simply each node counts the number of times the global clock tics, where at each tic one communication round occurs according to Assumption~\ref{assump:approximate-average}.
\end{remark}
\begin{remark}
\label{rem:qk}
\nv{Suppose we are given $(0,1)\ni\bar{\alpha}\geq \alpha$. For any $c>0$, choosing
$q_k=\big\lceil(2+c)\log_{\tfrac{1}{\bar{\alpha}}}(k+1)\big\rceil$ for $k\geq 0$ satisfies the condition in Theorem~\ref{col:dual-error-bound}, i.e., $C_0=\sum_{k=0}^{\infty}\alpha^{q_k}\sa{(k+1)}\leq \tfrac{1}{c}+1$.}
\nv{Moreover, this choice of $\{q_k\}_{k\in\integers_+}$ implies that
the total number of communication rounds right before the $K$-th iteration is equal to
$t_K=\sum_{k=0}^{K-1}q_k=(2+c)\big[(K-1)\log_{\tfrac{1}{\bar{\alpha}}}(K)+\log_{\tfrac{1}{\bar{\alpha}}}(e)\big]$ where $e$ is Euler's number.}
\end{remark}
\begin{corollary}
\label{cor:T-rate}
\nv{
Under the premise of Theorem~\ref{col:dual-error-bound}, let $\{\cG^t\}$ be an undirected time-varying graph and $\{q_k\}$ be as in Remark~\ref{rem:qk} with $(0,1)\ni\bar{\alpha}=\iota \alpha$ for some $\iota>1$. Let $Q(\epsilon)$ be the total number of communications needed to compute an $\epsilon$-optimal and $\epsilon$-feasible solution $(\bxi^\epsilon,\by^\epsilon)$ for $\gamma=1/\cO(\sqrt{N})$, i.e., $F(\bxi^\epsilon,\by^\epsilon)<\epsilon$ and $|\varphi(\bxi^\epsilon)-\varphi(\bxi^*)|<\epsilon$. If a dual bound $B<\infty$ is known, then $Q(\epsilon)=\cO(\frac{N^4}{\epsilon}\log(\frac{N}{\epsilon}))$. If a Slater point does not exist, i.e., $B=\infty$, then $Q(\epsilon)=\cO(\frac{N^{4.5}}{\epsilon}\log(\frac{N^{1.5}}{\epsilon}))$; moreover, $Q(\epsilon)=\cO(\frac{N^{4}}{\epsilon}\log(\frac{N}{\epsilon}))$ is achieved when $g_i$ is an affine function for $i\in\cN$.}
\end{corollary}
\begin{proof}
\nv{Theorem~\ref{col:dual-error-bound} implies that $(\bxi^\epsilon,\by^\epsilon)$ can be computed in $K^\epsilon=\Lambda(\gamma,\beta)/\epsilon$ DPDA-D iterations which requires $t_{K^\epsilon}=\cO(K^\epsilon\log(K^\epsilon)/\log(\frac{1}{\alpha}))$ communications in total -- see Remark~\ref{rem:qk}. Lemma~\ref{lem:approximation} implies that $\Gamma=1/N$; hence, setting $\gamma$ as described in Theorem~\ref{col:dual-error-bound}, we bound $\Lambda(\gamma,\beta)$ with $\cO(N)$ for CASE 1, $\cO(N^{1.5})$ for CASE 2 in general and with $\cO(N)$ when $g_i$'s are linear. Thus, the result follows from $\log(\frac{1}{\alpha})\geq \zeta/N^2$, where $\zeta$ can be as small as $\cO(1/N)$.}
\end{proof}
\nv{Note that when $\{\cG^t\}$ is a general time-varying directed graph, we employ push-sum protocol with $\Gamma=N^{NM}$ (see \cref{lem:push-sum}) which leads to exponential $\cO(1)$ bounds, e.g., $\Lambda(\gamma,\beta)=\cO(N^{NM+\frac{3}{2}}B)$ for CASE 1. To our best knowledge, polynomial bounds for directed graphs in $N$ is still an open question~\cite{nedic2016stochastic}.
That said, setting $\{q_k\}$ as in Remark~\ref{rem:qk}, DPDA-D can compute an $\epsilon$-solution in $\cO(\frac{1}{\epsilon}\log(\frac{1}{\epsilon}))$ communications even for general directed graphs {and choosing $q_k=(2+c)\log_{\frac{1}{\bar{\alpha}}}(k+1)$ in CASE 1 leads to $\cO(1)$ constant $N^{2NM+1.5}$} which is better than $\cO(\frac{1}{\epsilon^2}\log^2(\frac{1}{\epsilon}))$ result in~\cite{gu2018distributed,nedic2015distributed} {with $\cO(1)$ constant of $N^{2NM+2}$}. The method in~\cite{gu2018distributed} has $\log(k)/\sqrt{k}$ rate and requires exact minimization of convex $f_i$ over compact $\cX_i$ at each iteration. The method in~\cite{nedic2015distributed} can be used to solve the dual of \eqref{eq:central_problem} when $\rho_i$ is the indicator function of some compact convex set $\cX_i$ and $f_i$ is convex for $i\in\cN$; but, the subproblem that needs to be solved at each iteration is fairly complicated as in~\cite{gu2018distributed}.}
\begin{remark}\label{rem:poly}
Since $\sum_{k=1}^\infty\alpha^{\sqrt[\leftroot{-3}\uproot{3}p]{k}} k<\infty$ for any $p\geq 1$, if one chooses $q_k=\sqrt[\leftroot{-3}\uproot{3}p]{k}$ for $k\geq 1$, then
$t_K=\sum_{k=0}^{K-1} q_k=\cO(K^{1+1/p})$. This choice of $\{q_k\}_{k\in\integers_+}$, unlike the one in Remark~\ref{rem:qk},
is independent of the parameter $\alpha\in(0,1)$; but \sa{leads to} a larger $C_0=\sum_{k=0}^{\infty}\alpha^{q_k}\sa{(k+1)}=\cO(\alpha/\log^{2p}(\alpha))$ for $\alpha\in(1/e, 1)$. \nv{On the other hand, apriori running DPDA-D, a practical way to estimate $\alpha\in (0,1)$ is to run an average consensus iterations with a random initialization until iterates stagnate around the average; this leads to a rate coefficient $\alpha_i$ for $i\in\cN$. Next, nodes can do a max consensus to compute $\bar{\alpha}=\max_{i\in\cN}\alpha_i$ and use it to set $q_k=(2+c)\log_{\frac{1}{\bar{\alpha}}}(k+1)$.} 
\end{remark}
\begin{remark}\label{remark:transient_step}
\nv{Suppose the dual bound is not available. 
If $q_k=(2+c)\log_{1/\bar{\alpha}}(k+1)$ for some $c>0$ and $(0,1)\ni\bar{\alpha}\geq\alpha$, then $q_k\geq \log_{1/\alpha}(24N\Gamma(k+1))$ for all $k\geq \tilde{K}\triangleq \lceil(24N\Gamma)^{1/(1+c)} \rceil$.
If $q_k=\sqrt[\leftroot{-3}\uproot{3}p]{k}$ for some $p\geq 1$, then $q_k\geq \log_{1/\alpha}(24N\Gamma(k+1))$ for all $k\geq \tilde{K}= \lceil (\log_{1/\alpha}(24N\Gamma)+p\log_{1/\alpha}p)^p \rceil$.} 
\nv{Hence, the rate results of Theorem~\ref{col:dual-error-bound} will hold after the transient period of $\tilde{K}$ iterations.}
\end{remark}
\subsection{Auxiliary results to prove Theorem~\ref{col:dual-error-bound}}
Let $\{\bxi^k,\bv^k,\by^k\}_{k\geq 0}$ be the iterate sequence generated by DPDA-D as shown in Figure~\ref{alg:PDDual} and $\{\bw^{k}\}_{k\geq 0}$ be the auxiliary sequence where $\bw^k$ is given in~\eqref{eq:exact-w-computation} for $k\geq 1$ and we set $\bw^0\triangleq\bv^0=\mathbf{0}$. We first define the 
error sequence $\{\be^k\}_{k\geq 0}$: let $\be^{k}\triangleq (\bv^{k}-\bw^{k})/\gamma$ for all $k\geq 0$; hence, \sa{$\be^0=\be^1=\mathbf{0}$}
and for $k\geq 0$, we have
\vspace*{-2mm}
{\small
\begin{equation}
\label{eq:prox-error-seq}
\be^{k+1} = \cP_{\Ct}\left(\tfrac{1}{\gamma}{\bv}^k+{\by}^{k}\right)-\cR^k\left(\tfrac{1}{\gamma}{\bv}^k+{\by}^k \right).
\vspace*{-2mm}
\end{equation}
}%

In order to prove Theorem~\ref{col:dual-error-bound}, we first prove Lemma~\ref{thm:dynamic-rate} which help us to bound $\mathcal{L}(\bxi^k,\bv^k,\by)-\mathcal{L}(\bxi,\bv,\by^k)$ for any given $(\bxi,\bv,\by)\in\cZ$ and $k\geq 1$, where $\cL$ is defined in~\eqref{eq:lagrangian-dual-implementation}; and then we provide a few other technical results which will be used together with Lemma~\ref{thm:dynamic-rate} to show the asymptotic convergence of $\{\bxi^k,\bv^k,\by^k\}$ in Theorem~\ref{col:dual-error-bound}.
\begin{defn}
\label{def:Q}
Let $\mathbf{D}_\gamma$ and $\mathbf{D}_\kappa$ be the diagonal matrices given in Definition~\ref{def:bregman}.
\nv{Define a diagonal matrix $\bC\triangleq \diag([C_{g_i}]_{i\in\cN})$, and $H\triangleq [\bC~~\id_{N}]$. 
Given some $\beta>0$, define the symmetric matrix
{\small
$\mathbf{\bar{Q}}(\beta)\triangleq\begin{bmatrix}
\mathbf{\bar{D}}(\beta) & -H^\top\\
-H & \mathbf{\bar{D}}_\kappa
\end{bmatrix}$}, where
{\small $\mathbf{\bar{D}}(\beta)\triangleq
\begin{bmatrix}
\mathbf{\bar{D}}_\tau(\beta) & \zero\\
\zero & \tfrac{1}{\gamma}\id_N
\end{bmatrix}$,} $\mathbf{\bar{D}}_\tau(\beta)\triangleq
\diag([{1\over \tau_i}-\max\{1,L_{f_i}+\beta L_{g_i}\}]_{i\in \mathcal{N}})$ and $\mathbf{\bar{D}}_\kappa\triangleq \diag([\tfrac{1}{\kappa_i}]_{i\in\cN})$. Let
$\bu:\cZ\times\cZ\rightarrow\reals^{3N}$ such that $\bu(\bz,\bar{\bz}) \triangleq \begin{bmatrix}[\norm{\xi_i-\bar{\xi}_i}]^\top_{i\in\cN} & [\norm{w_i-\bar{w}_i}]^\top_{i\in\cN} & [\norm{y_i-\bar{y}_i}]^\top_{i\in\cN}\end{bmatrix}^\top\in\reals^{3N}$.}
\end{defn}
\begin{lemma}\label{thm:dynamic-rate}
Let $\cX$, $\cY$ and $\cZ$ be the spaces defined in Definition~\ref{def:bregman}. Suppose $\{\tilde{\bx}^k\}_{k\geq 0}\subset\cX$ be the primal and
$\{\by^k\}_{k\geq 0}\subset\cY$ be the dual iterate sequences generated by
Algorithm DPDA-D 
in Fig.~\ref{alg:PDDual}, using some positive stepsizes: $\{\tau_i,\kappa_i\}_{i\in\mathcal{N}}$ and $\gamma$, and initializing from an arbitrary 
$\bxi^0$ and $\bv^0=\by^0=\mathbf{0}$, where $\tilde{\bx}^k=[{\bxi^k}^\top~{\bv^k}^\top]^\top$ for $k\geq 0$. 
{Define $\{\bx^k\}$ and $\{\bz^k\}$ 
such that $\bx^k=[{\bxi^k}^\top~{\bw^k}^\top]^\top\in\cX$ and ${\bz}^k=[{\bx^k}^\top {\by^k}^\top]^\top\in\cZ$ for $k\geq 0$. Let $\{\beta_k\}_{k\geq 0}$ \sa{be} such that $\beta_k\geq \max_{i\in\cN}\norm{y_i^k}$ for $k\geq 0$, then for any ${\bx}=[\bxi^\top~\bw^\top]^\top\in \cX$, and $\by\in\cY$,
$\{{\bz}^k\}_{k\geq 0}\subset\cZ$ satisfies 
\vspace*{-1mm}
\nv{\small
\begin{align}
\label{thmeq:inexact-lagrangian-bound}
\mathcal{L}({\bx}^{k+1},\by)-\mathcal{L}({\bx},\by^{k+1})\leq &\left[D_x({\bx},{\bx}^k)+D_y(\by,\by^k)-\fprod{T({\bx})-T({\bx}^k),~\by-\by^k}\right] \nonumber \\
&-\left[D_x({\bx},{\bx}^{k+1})+D_y(\by,\by^{k+1})-\fprod{T({\bx})-T({\bx}^{k+1}),~\by-\by^{k+1}}\right]\nonumber \\
& +E^{k+1}(\bz)-\tfrac{1}{2}\bu({\bz}^{k+1},{\bz}^k)^\top\mathbf{\bar{Q}}(\beta_k)~\bu({\bz}^{k+1},{\bz}^k),\quad \forall~k\geq 0,
\end{align}
}%
where \nv{$\bu(\cdot,\cdot)$ is given in Definition~\ref{def:Q}}, ${\bz}^k=[{\bx^k}^\top\ {\by^k}^\top]^\top$,
$D_x$ and $D_y$ are Bregman functions 
in Definition~\ref{def:bregman},
\nv{$T(\cdot)$ is given in Definition~\ref{def:saddle-definitions}, $E^{k+1}(\bz)\triangleq \norm{\be^k}\norm{\bw-\bw^{k+1}}+\gamma\norm{2\be^{k+1}-\be^k}\norm{\by-\by^{k+1}}$ and $\be^{k}\triangleq (\bv^{k}-\bw^{k})/\gamma$ for $k\geq 0$.} 
}
\end{lemma}
\begin{proof}
{Given $\{\bv^k\}_{k\geq 0}$ generated as in Fig.~\ref{alg:PDDual}, let $\{\bw^k\}_{k\geq 0}$ sequence be defined according to \eqref{eq:pock-pd-3-v} -- 
recall that {$\{\bw^k\}_{k\geq 0}$} sequence is never actually computed in practice; this sequence will help us in our analysis of DPDA-D.
}

Let $\Phi$, $h$, and \nv{possibly \emph{nonlinear} map $T(\cdot)$} be as given in Definition~\ref{def:saddle-definitions}; hence, our objective is to compute a saddle-point for $\min_{\bx\in\cX}\max_{\by\in\cY}\Phi(\bx)+\nv{\fprod{T(\bx),\by}}-h(\by)$ 
to solve \eqref{eq:central_problem}. Using this notation, {and the fact that $\bv^k=\bw^k+\gamma\be^k$} for $k\geq 0$, we can represent $\{\bxi^k\}$, $\{\bw^k\}$ and $\{\by^k\}$ sequences in a more compact form as follows:
{\small
\begin{subequations}
\label{eq:dpda}
\begin{eqnarray}
&&\qquad \ \ \bx^{k+1}=\argmin_{\bx\in\cX}~{\rho(\bx)+f({\bx}^k)+\fprod{\grad f({\bx}^k)+\nv{\bJ T({\bx}^k)^\top \by^k+U\be^k},~\bx-\nv{\bx^k}}+D_x(\bx,\bx^k)},\label{eq:DPDA-x}\\
&&\qquad \ \ \by^{k+1}=\argmin_{\by\in\cY}~h(\by)-\nv{\fprod{2T({\bx}^{k+1})-T({\bx}^k)-\gamma (2\be^{k+1}-\be^k),\by}}+D_y(\by,\by^k),
\label{eq:DPDA-y}
\end{eqnarray}
\end{subequations}
}%
\nv{where $U=[\mathbf{0}~\id_{n_0}]^\top\in\reals^{(n+n_0)\times n_0}$} and $\{\bv^k\}$ is updated according to \eqref{eq:inexact-rule-dual}.
\nv{Let $S^k_1(\bw)\triangleq \fprod{\be^k,~\bw-\bw^{k+1}}$.} Since $\rho$ is a proper, closed, convex function and $D_x$ is a Bregman function, Property~1 in~\cite{Tseng08_1J} applied to \eqref{eq:DPDA-x} implies that for any ${\bx}\in\cX$, 
\vspace*{-1mm}%
{\small
\begin{multline}
\rho({\bx})-\rho(\bx^{k+1})+\fprod{\grad f({\bx}^k)+\nv{\bJ T({\bx}^k)^\top\by^k},~{\bx}-\bx^{k+1}} \geq \\
{D_x({\bx},\bx^{k+1})-D_x({\bx},{\bx}^k)+D_x(\bx^{k+1},{\bx}^k)}\nv{-S^k_1(\bw)}. \label{eq:triangular-ineq-Bregman}
\end{multline}}%
Moreover, convexity of $f_i$ and Lipschitz continuity of $\grad f_i$ implies that for any $\xi_i\in\reals^{n_i}$,
{\small
\begin{align*}
f_i(\xi_i)&\geq f_i(\xi_i^k)+\fprod{\grad f_i(\xi_i^k),\xi_i-\xi_i^k} \geq f_i(\xi_i^{k+1})+\fprod{\grad f_i(\xi_i^{k}),\xi_i-\xi_i^{k+1}}-\frac{{L_{f_i}}}{2}\|{\xi_i^{k+1}-\xi_i^k}\|^2.
\end{align*}}%
\nv{Similarly, since $-y_i^k\in\cK^*$, $\cK$-convexity of $g_i$ and Lipschitz continuity of $\bJ g_i$ imply
\begin{align*}
-\fprod{g_i(\xi_i),~y_i^k}&\geq -\fprod{g_i(\xi_i^k),~y_i^k}-\fprod{\bJ g_i(\xi_i^k)^\top y_i^k,~\xi_i-\xi_i^k} \\
&\geq -\fprod{g_i(\xi_i^{k+1}),~y_i^k}-\fprod{\bJ g_i(\xi_i^k)^\top y_i^k,~\xi_i-\xi_i^{k+1}}-\frac{\beta_k L_{g_i}}{2}\norm{\xi_i^{k+1}-\xi_i^k}^2.
\end{align*}}%
Summing \nv{the last two inequalities first for each $i$, then summing over $i\in\cN$,} and combining the sum with \eqref{eq:triangular-ineq-Bregman}, we get
\vspace*{-1mm}%
{\small
\begin{multline}
\Phi({\bx})-\Phi(\nv{\bx^{k+1}})+\nv{\fprod{T({\bx})-T({\bx}^{k+1}),~\by^k}}\geq \\
D_x({\bx},{\bx}^{k+1})-D_x({\bx},{\bx}^{k})+\nv{\tfrac{1}{2}\norm{{\bx}^{k+1}-{\bx}^k}_{\mathbf{\tilde{D}}^k}^2-S_1^k(\bw)},
\label{eq:lemma-x}
\end{multline}}%
\nv{where 
$\mathbf{\tilde D}^k\triangleq
\begin{bmatrix}
\mathbf{\tilde D}^k_\tau & \zero\\
\zero & \bD_\gamma
\end{bmatrix}$ and $\mathbf{\tilde D}^k_\tau\triangleq \diag([(\frac{1}{\tau_i}-(L_{f_i}+\beta_kL_{g_i}))\id_{n_i}]_{i\in\cN})$}. 

Finally, since $h$ is a proper, closed, convex function and $D_y$ is a Bregman function, Property~1 in~\cite{Tseng08_1J} applied to \eqref{eq:DPDA-y} implies
\vspace*{-2mm}%
{\small
\begin{multline}
h(\by)-h(\by^{k+1})-\fprod{\nv{2T({\bx}^{k+1})-T({\bx}^k)},~\by-\by^{k+1}}\geq \\
D_y(\by,\by^{k+1})-D_y(\by,\by^{k})+\tfrac{1}{2}\norm{\by^{k+1}-\by^k}_{\mathbf{D}_\kappa}^2\nv{-S_2^k(\by)}, \label{eq:lemma-y}
\end{multline}}%
where 
\nv{$S_2^k(\by)=\gamma \fprod{(2\be^{k+1}-\be^k),~\by-\by^{k+1}}$}.
Summing \eqref{eq:lemma-x} and \eqref{eq:lemma-y}, and rearranging the terms yields
\vspace*{-1mm}%
\nv{\footnotesize
\begin{align*}
\mathcal{L}({\bx}^{k+1},\by)-\mathcal{L}({\bx},\by^{k+1})\leq &
S^k(\bz)+\Big[D_x({\bx},{\bx}^k)+D_y(\by,\by^k)-\fprod{T({\bx})-T({\bx}^k),~\by-\by^k}\Big] \\
&-\Big[D_x({\bx},{\bx}^{k+1})+D_y(\by,\by^{k+1})-\fprod{T({\bx})-T({\bx}^{k+1}),~\by-\by^{k+1}}\Big],
\end{align*}
\begin{equation*}
S^k(\bz)\triangleq S_1^k(\bw)+S_2^k(\by)+\fprod{T(\bx^{k+1})-T(\bx^k),~\by^{k+1}-\by^k}-\frac{1}{2}\norm{\bx^{k+1}-\bx^k}^2_{\mathbf{\tilde D}^k}-\frac{1}{2}\norm{\by^{k+1}-\by^k}_{\bD_\kappa}^2.
\end{equation*}}%
\nv{Using Cauchy Schwartz inequality and Lipschitz continuity of $g_i$ for all $i\in\cN$, one can bound $S^k(\bz)$ as follows:}
{\small
\begin{align*}
S^k(\bz)\leq \|\be^k\|\|\bw-\bw^{k+1}\|+\gamma\|2\be^{k+1}-\be^k\|\|\by-\by^{k+1}\|-\tfrac{1}{2}\bu({\bz}^{k+1},{\bz}^k)^\top\mathbf{\bar{Q}}(\beta_k)~\bu({\bz}^{k+1},{\bz}^k)
\end{align*}}%
for all $\bx\in\cX$, $\by\in\cY$, and $k\geq 0$. 
\end{proof}
\nv{Given some $\beta>0$, next lemma 
gives a sufficient condition on the local step-sizes
for \nv{$\mathbf{\bar{Q}}(\beta)$} to be positive (semi)-definite.} 
\begin{lemma}
\label{lem:schur}
\nv{Consider $\mathbf{\bar{Q}}(\beta)$ given in Definition~\ref{def:Q} for some $\beta>0$. If positive $\{\tau_i,\kappa_i\}_{i\in\mathcal{N}}$ and $\gamma$ 
satisfy {$\tau_i\leq\frac{1}{\max\{1,L_{f_i}+\beta L_{g_i}\}}$}, $\kappa_i\leq \frac{1}{\gamma}$ and $({1\over \tau_i}-\max\{1,L_{f_i}+\beta L_{g_i}\})( {1\over \kappa_i}-\gamma) > C_{g_i}^2$ 
for all $i\in\cN$, then $\mathbf{\bar{Q}}(\beta)\succ \mathbf{0}$. Moreover, $\mathbf{\bar{Q}}(\beta)\succeq \mathbf{0}$ if the strict inequalities in the last condition 
are relaxed to $\geq$-relation for some $i\in\cN$.}
\end{lemma}
\begin{proof}
Given a permutation matrix ${\small \bP\triangleq \begin{bmatrix} {\bI_{N}} & \mathbf{0} & \mathbf{0}\\ \mathbf{0} & \mathbf{0} & \bI_{N}\\ \mathbf{0} & {\bI_{N}} & \mathbf{0}\end{bmatrix}}$, ${\bf \bar{Q}}(\beta)\succ \mathbf{0}$ is equivalent to $\bP\mathbf{\bar{Q}}(\beta)\bP^{-1}\succ \mathbf{0}$. Since \nv{$\gamma>0$}, Schur complement condition implies 
\nv{\small
\begin{align}\label{eq:schur-cond-2}
\bP\mathbf{\bar{Q}}(\beta)\bP^{-1}=
\begin{bmatrix}
\mathbf{\bar{D}}_\tau(\beta) & -\bC& \mathbf{0}\\
-\bC & \mathbf{\bar{D}}_\kappa & -\id_N\\
\mathbf{0} & -\id_N & \tfrac{1}{\gamma}\id_N
\end{bmatrix}\succ \mathbf{0}\
\Leftrightarrow\
\begin{bmatrix}
\mathbf{\bar{D}}_\tau(\beta) & -\bC\\ -\bC & \mathbf{\bar{D}}_\kappa
\end{bmatrix}-\gamma
\begin{bmatrix} \mathbf{0} & \mathbf{0} \\ \mathbf{0} & \id_N \end{bmatrix} \succ \mathbf{0}.
\end{align}}%
Note $\mathbf{\bar{D}}_\tau(\beta)\succ 0$; hence, using Schur complement again, one can conclude that the condition on the right-hand-side of \eqref{eq:schur-cond-2} holds if and only if \nv{$\mathbf{\bar{D}}_\kappa-\gamma\id_N-\bC \mathbf{\bar{D}}_\tau(\beta)^{-1}\bC\succ \mathbf{0}$, equivalently $(\frac{1}{\kappa_i} - \gamma) - (\frac{1}{\tau_i}-\max\{1,L_{f_i}+\beta L_{g_i}\})^{-1} C_{g_i}^2 > 0$} for all $i\in\cN$. Hence, 
the conditions in Lemma~\ref{lem:schur} are both necessary and sufficient for ${\bf \bar{Q}}(\beta)\succ \mathbf{0}$. 
\sa{If the strict inequalities in the last condition 
are relaxed to include equality for some $i\in\cN$, then 
it is sufficient for $\mathbf{\bar{Q}}(\beta)\succeq \mathbf{0}$.}
\end{proof}
\nv{Note if {$\{\by^k\}\subseteq\cB$}, then we can set $\beta^k=2B$ for all $k\geq 0$; hence, Lemma~\ref{lem:schur} implies that if the local step-size condition in \eqref{eq:step-rule-dual} holds (possibly with equality for some $i\in\cN$), then 
\nv{$\mathbf{\bar{Q}}(\beta^k)$} in~\eqref{thmeq:inexact-lagrangian-bound} is positive (semi)-definite for all $k\geq 0$,
which helps to simplify the analysis of Theorem~\ref{col:dual-error-bound}.}
\subsection{Proof of Theorem~\ref{col:dual-error-bound}}
Using the two technical lemmas in the Appendix \ref{append}, we are ready to prove Theorem~\ref{col:dual-error-bound}.
\nv{The proof is divided into three subsections where we first show that the dual iterate sequence $\{\by^k\}$ {stays} bounded even if a dual bound is not provided, i.e., $B=\infty$; second, we prove the convergence of the iterate sequences; finally, we provide rate statements for the infeasibility and suboptimality.}

Under Assumption~\ref{assump:existence}, a saddle point $(\bxi^*,\bw^*,\by^*)$ for $\min_{\bxi,\bw}\max_{\by}\cL(\bxi,\bw,\by)$ exists, where $\cL$ is given in~\eqref{eq:lagrangian-dual-implementation}; moreover, any saddle point $(\bxi^*,\bw^*,\by^*)$ satisfies that $\by^*=\one \otimes y^*$ for some $y^*\in\cB_0$ such that $(\bxi^*,y^*)$ is a primal-dual solution to \eqref{eq:central_problem}. Thus, {$y^*\in\cK^\circ$} and $\mathcal{L}(\bxi^*,\bw^*,\by^*)=\varphi(\bxi^*)$. Indeed, this implies $\fprod{\by^*,\bw^*}-\sigma_{\Ct}(\bw^*)=0$ which leads to {$\sum_{i\in\cN}w_i^*=\mathbf{0}$}, i.e., $\bw^*\in\cC^\circ$. Hence, we have $0=\fprod{\by^*,\bw^*}=\sigma_{\Ct}(\bw^*)$, and it trivially follows that if $(\bxi^*,\bw^*,\by^*)$ is a saddle point of $\cL$ with $\bw^*\neq\mathbf{0}$, then $(\bxi^*,\mathbf{0},\by^*)$ is another saddle point of $\mathcal{L}$. Therefore, under Assumption~\ref{assump:existence}, there is always a saddle point of the form $(\bxi^*,\mathbf{0},\by^*)$, i.e., with $\bw^*=\mathbf{0}$. In the rest, let $\bz^*$ be a saddle point with components $(\bxi^*,\mathbf{0},\by^*)$. 

Next, we state few useful observations 
later used in the proof. 
\nv{Given some $\beta>0$,} when primal-dual stepsizes are chosen as stated in \eqref{eq:step-rule-dual}, 
\nv{Lemma~\ref{lem:schur} implies that $\mathbf{\bar{Q}}(\beta)\succ 0$ and it follows from definitions of $D_x,D_y$ and $T$ that for all $\bz,\bz'\in\cZ$,}
\vspace*{-2mm}%
\nv{\small
\begin{multline}\label{eq:min-eignvalue}
D_x(\bx,\bx')+D_y(\by,\by')-\fprod{T(\bx)-T(\bx'),~\by-\by'} \\
\geq \nv{\sum_{i\in\cN}\tfrac{1}{2}\max\{1,L_{f_i}+\beta L_{g_i}\}\norm{\xi_i-\xi_i'}^2}+\frac{1}{4\gamma}\norm{\bw-\bw'}^2+\frac{\gamma}{4}\norm{\by-\by'}^2.
\end{multline}}%
\noindent\nv{Moreover, the error term ${E^{k+1}}(\bz)$, 
defined in Lemma~\ref{thm:dynamic-rate},
trivially satisfies
{\small
\begin{align}
\label{eq:error-term-bound}
E^{k+1}(\bz)\leq \gamma (2\|\be^{k+1}\|+\|\be^k\|)( \tfrac{1}{\gamma}\|\bw^{k+1}-\bw\|+\|\by^{k+1}-\by\|),\quad\forall\ k\geq 0.
\end{align}}}%
 \vspace*{-5mm}
\subsubsection{Boundedness of dual iterate sequence}\label{sec:bound-dual} 
Next we show 
$\{\by^k\}_{k\geq 0}$ and $\{\bw^k\}_{k\geq 0}$ are bounded. 
More specifically, our aim is to show that there exist $\bar{\beta},\varsigma,\nu\in\reals_+$ such that if we choose the step-sizes as in \eqref{eq:step-rule-dual} for 
any $\gamma>0$ and $\beta\geq \bar{\beta}$, then
\vspace*{-1mm}
{\small
\begin{align}
\label{eq:induction}
\max_{i\in\cN}\{\|y_i^k\|\}\leq \beta,\quad \|\bw^k\|\leq \varsigma,\quad 
\|\be^{k}\|\leq \nu \alpha^{q_{k-1}}k,
\end{align}}%
for all $k\geq 0$, where \sa{$q_{-1}\triangleq 0$ and $q_0\triangleq 0$}. 
Below we provide the analysis for two sperate cases. \nv{We first define two quantities that are repeatedly used in the proof. Define $C_0\triangleq \sum_{k=1}^\infty\alpha^{q_{k-1}}k <+\infty$ --{note $C_0>1$.} Let $A_0\triangleq D_x({\bx^*},{\bx}^0)+D_y(\by^*,\by^0)-\fprod{{T({\bx^*})-T({\bx}^0)},~\by^*-\by^0}$. Since we initialize $\bw^0=\by^0=\mathbf{0}$, the proof of Lemma~\ref{lem:schur} implies that $A_0\leq \norm{\bxi^*-\bxi^0}_{\bD_\tau}^2+\norm{\by^*}_{\bD_\kappa}^2$. 
\sa{Recall the definitions of $\bar{C}_g$ and $\bar{R}_x$ 
given in Section~\ref{sec:convergence}.}
Using \eqref{eq:step-rule-dual}, we get
\nv{\small
\begin{align}
\label{eq:A_bound}
A_0\leq \norm{\bxi^*-\bxi^0}_{\bD_\tau}^2+\norm{\by^*}_{\bD_\kappa}^2\leq (\beta+1)N\bar{R}_x^2+(\bar{C}_g+\tfrac{5}{2}\gamma)N\norm{y^*}^2\triangleq\bar{A}_0,
\end{align}}%
} {In the rest we assume $\bar{C}_g\geq 1$.}
\paragraph{CASE 1: Bound $B$ on $\norm{y^*}$ is available, i.e., $B\in(0,\infty)$}
\nv{In this part, we assume that a nontrivial dual bound $B\in(0,\infty)$ is available.}
Suppose we set $\bar{\beta}=2B$ and we choose the step-sizes as in \eqref{eq:step-rule-dual} for some $\gamma>0$ and $\beta\geq \bar{\beta}$. Trivially, from \eqref{eq:pock-pd-3-y}, we have $\max_{i\in\cN}\norm{y_i^k}\leq 2B\leq \beta$ for $k\geq 0$. Hence, Lemma~\ref{thm:dynamic-rate} shows that for all $k\geq 0$, \eqref{thmeq:inexact-lagrangian-bound} holds for $\beta_k=\beta$.
Moreover, stepsize condition in~\eqref{eq:step-rule-dual} and Lemma~\ref{lem:schur} imply that $\bar{\bQ}(\beta)\succ\mathbf{0}$. Therefore, for any $\ell\geq 0$, dropping the last term in \eqref{thmeq:inexact-lagrangian-bound}, summing over $k\in\{0,\ldots,\ell\}$, and using Jensen's inequality, we get for all $\bz\in\cZ$, 
\vspace*{-1mm}%
{\small
\begin{eqnarray}\label{eq:dynamic-saddle-rate}
\lefteqn{(\ell+1)(\mathcal{L}(\bar{\bf x}^{\ell+1},\by)-\mathcal{L}({\bx},\bar{\by}^{\ell+1})) \leq} \\
&&\Big[ D_x({\bx},{\bx}^0)+D_y(\by,\by^0)-\fprod{\nv{T({\bx})-T({\bx}^0)},~\by-\by^0}\Big] \nonumber \\
&&-\Big[ D_x({\bx},{\bx}^{\ell+1})+D_y(\by,\by^{\ell+1})-\fprod{\nv{T({\bx})-T({\bx}^{\ell+1})},~\by-\by^{\ell+1}}\Big]+{\sum_{k=0}^{\ell} E^{k+1}(\bz)},
\nonumber
\end{eqnarray}
}%
where $\bar{\bx}^{\ell+1}\triangleq {1\over (\ell+1)}\sum_{k=1}^{\ell+1}{\bx}^k$ and $\bar{\by}^{\ell+1}\triangleq {1\over (\ell+1)}\sum_{k=1}^{\ell+1}\by^k$. For any $\ell\geq 0$, setting $\bz=\bz^*$ in \eqref{eq:dynamic-saddle-rate}, using $\cL(\bar{\bx}^{\ell+1},\by^*)-\cL(\bx^*,\bar{\by}^{\ell+1})\geq 0$, and \eqref{eq:min-eignvalue} we obtain,
\nv{\small
\begin{eqnarray}\label{eq:iterate-bound-general}
\tfrac{1}{4\gamma}\|\bw^{\ell+1}\|^2+\tfrac{\gamma}{4}\|\by^*-\by^{\ell+1}\|^2\leq A_0+\sum_{k=0}^{\ell} E^{k+1}(\bz^*).
\end{eqnarray}}%
\nv{Hence, using~\eqref{eq:error-term-bound}, \eqref{eq:iterate-bound-general} and the fact that $\bw^*=0$, for all $\ell\geq 0$, we have
{\small
\begin{align}\label{eq:iterate-quad}
\tfrac{\gamma}{8}(\tfrac{1}{\gamma}\|\bw^{\ell+1}\|+\|\by^*-\by^{\ell+1}\|)^2
& \leq \tfrac{1}{4\gamma}\|\bw^{\ell+1}\|^2+\tfrac{\gamma}{4}\|\by^*-\by^{\ell+1}\|^2 \nonumber \\
& \leq A_0 +\sum_{k=1}^{\ell+1}\gamma(2\|\be^{k}\|+\|\be^{k-1}\|)(\tfrac{1}{\gamma}\|\bw^k\|+\|\by^*-\by^k\|).
\end{align}}%
}
Next, we use Lemma~\ref{lem:technical-lemma} with $u_k=\tfrac{1}{\gamma}\|\bw^k\|+\|\by^*-\by^k\|$, $S_k=\tfrac{8}{\gamma}A_0$ for $k\geq 0$, and $\lambda_k=8(2\norm{\be^{k}}+\norm{\be^{k-1}})$ for $k\geq 1$. Note \eqref{eq:min-eignvalue} and $\bw^0=\by^0=\mathbf{0}$ imply that $A_0\geq \frac{\gamma}{4}\norm{\by^*}^2$; hence, we have $u_0^2\leq S_0$. Thus, Lemma~\ref{lem:technical-lemma}
implies that for all $\ell\geq 0$,
{\small
\begin{align}\label{eq:iterate-bound}
\tfrac{1}{\gamma}\|\bw^{\ell+1}\|+\|\by^*-\by^{\ell+1}\|\leq&\tfrac{1}{2}\sum_{k=1}^{\ell+1}\lambda_k +\sqrt{\frac{8A_0}{\gamma}+\left(\tfrac{1}{2}\sum_{k=1}^{\ell+1}\lambda_k\right)^2}\leq 24\sum_{k=1}^{\ell+1}\norm{\be^{k}} +\sqrt{\frac{8A_0}{\gamma}}.
\end{align}}%

For each $i\in\cN$ and $k\geq 0$, the definition of $\cR^k$ in \eqref{eq:approx_error-for-full-vector} implies $\cR_i^k(\by)\in\cB_0$ for all $\by$; hence, from \eqref{eq:inexact-rule-dual},
$\|v_i^{k+1}\| \leq \|v_i^k+\gamma y_i^k\|+\gamma\|\cR_i^k\big(\tfrac{1}{\gamma}{\bv}^k
+\by^k\big)\| \leq \|v_i^k\|+{4\gamma B}$; thus, $\max_{i\in\cN}\norm{v_i^k}\leq 4\gamma B k$ for $k\geq 0$, and
we trivially get the following bound:
{\small
\begin{equation}\label{eq:mu-bound}
\|\bv^k\| \leq {4\gamma \sqrt{N}~B~k},\quad \forall\ k\geq 0. 
\end{equation}
}%
Hence, for $k\geq 0$, since
$\|\by^k\|\leq{2\sqrt{N}~B}$, it follows from \eqref{eq:approx_error-for-full-vector}, \eqref{eq:prox-error-seq} and \eqref{eq:mu-bound} that
{\small
\begin{align}
\label{eq:e-bound}
\|\be^{k+1}\|
\leq N~\Gamma \alpha^{q_k}\|\tfrac{1}{\gamma}{\bv}^k+\by^k\| \leq {2N^{\tfrac{3}{2}}B\Gamma \alpha^{q_k}{(2k+1)}}\ \nv{\implies\ \nu=4N^{\tfrac{3}{2}}B\Gamma.}
\end{align}
}%
Therefore, $\norm{\be^k}$ satisfies \eqref{eq:induction} for $\nu=4N^{\tfrac{3}{2}}B\Gamma$. Using this result within \eqref{eq:iterate-bound}, we obtain \vspace*{-3mm}
{\small
\begin{align}\label{eq:w-bound-final}
\|\bw^{\ell+1}\|\leq 24\gamma\nu\sum_{k=1}^{\ell+1}\alpha^{q_{k-1}}k + \sqrt{8A_0\gamma}\leq \varsigma\triangleq 24\gamma \nu C_0 + \sqrt{8A_0\gamma},\quad \forall\ \ell\geq 0.
\end{align}}%

\paragraph{CASE 2: Bound $B$ on $\norm{y^*}$ is not available, i.e., $B=\infty$} 
We set $B=+\infty$ in 
Algorithm \ref{alg:PDDual}.
We 
prove 
the claim in \eqref{eq:induction} using induction; indeed, we construct $\bar{\beta},\varsigma,\nu\in\reals_+$ and show for any $\gamma>0$, $\beta\geq \bar{\beta}$ and $K\geq 1$ that if \eqref{eq:induction} holds for all $k\in\cI\triangleq \{0,\hdots,K-1\}$, then $\norm{\be^{K}}\leq \nu\alpha^{q_{K-1}}K$ also holds and this implies $\norm{\bw^K}\leq \varsigma$ and $\max_{i\in\cN}\{\norm{y_i^K}\}\leq \beta$, which would complete the induction.

Since $\bv^0=\bw^0=\mathbf{0}$ and $\by^0=\mathbf{0}$, \eqref{eq:induction} trivially holds for $k=0$. 
Suppose \sa{for some $\beta>0$,} \eqref{eq:induction} holds for $k\in\cI$; 
hence, $\max_{i\in\cN}\{\|y_i^k\|\}\leq \beta$ for $k\in\cI$, and using the same arguments as in CASE 1, it can be shown that \eqref{eq:dynamic-saddle-rate}, \eqref{eq:iterate-bound-general}, \eqref{eq:iterate-quad} and \eqref{eq:iterate-bound} hold for all $\ell \in\cI$. Next, using
\eqref{eq:prox-error-seq}, \eqref{eq:iterate-bound} implies that 
{\small
\begin{align}\label{eq:error-bound-recursive}
\|\be^{K}\|=&\norm{\cP_{\Ct}\left(\tfrac{1}{\gamma}{\bw}^{K-1}+\be^{K-1}+\by^{K-1}\right)-\cR^{K-1}\left(\tfrac{1}{\gamma}{\bw}^{K-1}+\be^{K-1}+\by^{K-1}\right)} \nonumber \\
\leq &N~\Gamma \alpha^{q_{K-1}} \Big(\|\be^{K-1}\|+24\sum_{k=1}^{K-1}\|\be^k\| +\sqrt{\frac{8A_0}{\gamma}}+\|\by^*\|\Big) \nonumber\\
\leq & N~\Gamma \nu \alpha^{q_{K-1}} \Big(\alpha^{q_{K-2}}(K-1)+24\sum_{k=1}^{K-1}\alpha^{q_{k-1}}k + \Big( \sqrt{\frac{8A_0}{\gamma}}+\norm{\by^*}\Big)/\nu\Big).
\end{align}}%
The assumption, $q_k\geq \log_{1/\alpha}(24 N\Gamma(k+1))$ for $k\geq 0$, and $q_{-1}=0$ imply that $\alpha^{q_{k-1}}k\leq \frac{1}{24 N\Gamma}$ for $k\geq 0$. Thus, for $\nu\triangleq \tfrac{24}{23} N\Gamma(\norm{\by^*}+\sqrt{\frac{8A_0}{\gamma}})$, 
\eqref{eq:error-bound-recursive} is indeed bounded above by 
$\nu \alpha^{q_{K-1}}K$ which proves the induction on $\norm{\be^K}$. 
Hence, using this result within \eqref{eq:iterate-bound} for $\ell=K-1$, we obtain
{\small
\begin{align}\label{eq:iterate-bound-final}
\tfrac{1}{\gamma}\|\bw^K\|+\|\by^{K}\|\leq 24\nu\sum_{k=1}^K\alpha^{q_{k-1}}k + \sqrt{\frac{8A_0}{\gamma}}+\norm{\by^*}\leq 24\nu C_0 + \sqrt{\frac{8A_0}{\gamma}}+\norm{\by^*},
\end{align}}%
where $C_0\triangleq \sum_{k=1}^\infty\alpha^{q_{k-1}}k <+\infty$ and is independent of $\beta$. Thus, $\norm{\bw^K}\leq\gamma\beta$ and $\max_{i\in\cN}\{\norm{y_i^K}\}\leq\beta$ 
for all $\beta\geq \big(\tfrac{576}{23}N\Gamma C_0 +1\big)\big(\sqrt{\frac{8A_0}{\gamma}}+\sqrt{N}\norm{y^*}\big)$. 
\nv{Hence, using the bound on $A_0$ in
\eqref{eq:A_bound},} we derive 
a sufficient condition on $\beta$: 
{\small
\begin{align}
&\beta\geq (\tfrac{576}{23}N\Gamma C_0+1)\sqrt{N}\left(\norm{y^*}+\sqrt{\tfrac{8}{\gamma}\Big(
(\beta+1)\bar{R}_x^2+(\bar{C}_g+\tfrac{5}{2}\gamma)\norm{y^*}^2\Big)}\right). \label{eq:beta-cond}
\end{align}}%
Note \eqref{eq:beta-cond} implies that there exists $\bar{\beta}\in\reals$ such that $\bar{\beta}\geq\norm{\by^*}$ and for all $\beta\geq \bar{\beta}$ and $\varsigma=\gamma\beta$, \eqref{eq:induction} holds when the stepsizes are chosen as in \eqref{eq:step-rule-dual} using $\beta$. Thus, when primal step-sizes $[\tau_i]_{i\in\cN}$ chosen sufficiently small and $\{q_k\}$ chosen such that $q_k\geq \log_{1/\alpha}(24 N\Gamma(k+1))$ and $\sum_{k=1}^\infty \alpha^{q_{k-1}}k<\infty$, both $\{\by^k\}$ and $\{\bw^k\}$ 
are \emph{bounded}.
Moreover, solving the quadratic inequality in~\eqref{eq:beta-cond}, 
we get
\begin{align}
\nv{\beta=\cO\Big(\tfrac{1}{\gamma}{N^3\Gamma^2C_0^2\bar{R}_x^2}+N^{3/2}\Gamma {C_0}\Big(\norm{y^*}+\sqrt{\tfrac{1}{\gamma}(\bar{R}_x^2+\bar{C}_g \norm{y^*}^2)}\Big)\Big).} \label{eq:beta-cond-sol}
\end{align}
If $g_i$ is an affine function ($L_{g_i}=0$) for all $i\in\cN$, then \nv{choosing $q_k$ 
as before and setting $\tau_i=(\max\{1, L_{f_i}\}+C_{g_i})^{-1}$ for $i\in\cN$ 
 guarantees that} $\{\by^k\}_{k}$ and $\{\bw^k\}_{k}$ are bounded. \nv{Moreover, since $\bD_\tau$ does not depend on $\beta$, {the term $({\beta}+1)\bar{R}_x^2$ on the rhs of~\eqref{eq:beta-cond} becomes $\bar{R}_x^2$;} thus, $\beta=\cO\Big(N^{3/2}\Gamma {C_0}\Big(\norm{y^*}+\sqrt{\tfrac{1}{\gamma}(\bar{R}_x^2+\bar{C}_g \norm{y^*}^2)}\Big)\Big)$.}
\subsubsection{Convergence of iterates} In previous section, we showed that there exist $\bar{\beta},\varsigma,\nu\in\reals_+$ such that if we choose the step-sizes as in \eqref{eq:step-rule-dual} for any $\gamma>0$ and $\beta\geq \bar{\beta}$, then \eqref{eq:induction} holds for all $k\geq 0$. Consider a saddle point $\bz^*=[{\bx^*}^\top {\by^*}^\top]^\top$ of $\cL$ in~\eqref{eq:lagrangian-dual-implementation}, where $\bx^*=[{\bxi^*}^\top {\bw^*}^\top]^\top$. 
Trivially, \eqref{eq:error-term-bound} and \eqref{eq:induction} imply that
{\small
\begin{align}
\label{eq:error-term-sum-bound}
\sum_{k=0}^\infty E^{k+1}(\bz^*)\leq 3\gamma \max_{k\geq 0}\{\tfrac{1}{\gamma}\|\bw^{k+1}-\bw^*\|+\|\by^{k+1}-\by^*\|\} \sum_{k=0}^\infty \|\be^{k+1}\|<\infty.
\end{align}}%
Evaluating \eqref{thmeq:inexact-lagrangian-bound} at $\bz=\bz^*$, we get
{\small
\begin{eqnarray} \label{eq:general-bound-dynamic}
0 \leq\mathcal{L}(\bx^{k+1},\by^*)-\mathcal{L}(\bx^*,\by^{k+1})\leq \ba^k-\ba^{k+1}-\bb^k+\bc^k, 
\end{eqnarray}}%
for $k\geq 0$, where $\ba^k\triangleq D_x(\bx^*,{\bx}^k)+D_y(\by^*,\by^k)-\fprod{T(\bx^*)-T({\bx}^k),~\by^*-\by^k}$, {$\bb^k\triangleq\tfrac{1}{2}\norm{\bu({\bz}^{k+1},{\bz}^k)}_{\mathbf{\bar{Q}}(\beta)}^2$} and $\bc^k\triangleq E^{k+1}(\bz^*)$ for $k\geq 0$. Clearly, $\bb^k\geq 0$ and $\bc^k\geq 0$ for $k\geq 0$. Moreover, from \eqref{eq:min-eignvalue}, we get $\ba^k\geq 0$ 
for $k\geq 0$.
\nv{Since 
$\sum_{k=0}^{\infty}E^{k+1}(\bz^*)< \infty$,} 
Lemma~\ref{lem:supermartingale} implies that 
$\lim_{k\rightarrow\infty}\ba^k$ exists. Thus, $\{\ba^k\}$ is a bounded sequence; and due to \eqref{eq:min-eignvalue}, $\{{\bz}^k\}$ is bounded as well. Consequently, there exists a subsequence $\{{\bz}^{k_n}\}_n$ such that ${\bz}^{k_n}\rightarrow{\bz^{\#}}$ as $n\rightarrow\infty$. Thus,
there exists $N_1$ such that for all $n\geq N_1$, we have $\norm{{\bz}^{k_n}-{\bz^\#}}<{\epsilon \over 2}$. Moreover, Lemma~\ref{lem:supermartingale} also implies 
\nv{$\sum_{k=0}^\infty\norm{\bu({\bz}^{k+1},{\bz}^k)}_{\mathbf{\bar{Q}}(\beta)}^2<\infty$}. Since $\mathbf{\bar{Q}}(\beta)\succ \mathbf{0}$, for any $\epsilon>0$, there exists $N_2$ such that for all $n\geq N_2$, we have $\norm{{\bz}^{k_n+1}-{\bz}^{k_n}}<{\epsilon \over 2}$. Therefore, by letting $N=\max\{N_1,N_2\}$ we get $\norm{{\bz}^{k_n+1}-{\bz^\#}}<\epsilon$, i.e., ${\bz}^{k_n+1}\rightarrow{\bz^\#}$ as $n\rightarrow\infty$.

\nv{Note that 
\eqref{eq:induction} implies $\norm{\be^k}\rightarrow 0$ as $k\rightarrow\infty$} for any $\{q_k\}$ \nv{such that $\sum_{k=1}^\infty\alpha^{q_{k}}k <+\infty$}. 
Recall that $\psi_x(\bx)=\frac{1}{2}\norm{\bxi}_{\mathbf{D}_\tau}^2+\frac{1}{2}\norm{\bw}_{\mathbf{D}_\gamma}^2$, and $\psi_y(\by)=\frac{1}{2}\norm{\by}_{\mathbf{D}_\kappa}^2$ are the strongly convex functions corresponding to Bregman distance functions $D_x$ and $D_y$, respectively. In particular, $D_x(\bx,\bar{\bx})=\psi_x(\bx)-\psi_x(\bar{\bx})-\fprod{\grad \psi_x(\bar{\bx}),~\bx-\bar{\bx}}$, and $D_y$ is defined similarly. The optimality conditions for \eqref{eq:dpda} imply that for all $n\in\integers_+$, $\bq^{n}\in\partial\rho(\bx^{k_n+1})$ and $\bp^{n}\in\partial h(\by^{k_n+1})$, where
\nv{$\bq^{n}\triangleq\grad\psi_x({\bx}^{k_n})-\grad\psi_x(\bx^{k_n+1})-\big(\grad f({\bx}^{k_n})+\bJ T({\bx}^{k_n})^\top\by^{k_n}+U\be^{k_n}\big)$, and $\bp^{n}\triangleq\grad\psi_y(\by^{k_n})-\grad\psi_y(\by^{k_n+1})+2T({\bx}^{k_n+1})-T({\bx}^{k_n})+\gamma(2\be^{k_n+1}-\be^{k_n})$.}
Since $\grad\psi_x$ and $\grad\psi_y$ are continuously differentiable on $\dom \rho$ and $\dom h$, respectively, and since $\rho$ and $h$ are proper, closed convex functions, it follows from Theorem~24.4 in~\cite{rockafellar2015convex} that
\nv{$\partial\rho({\bx}^\#)\ni\lim_n \bq^n = -\grad f({\bx}^\#)-\bJ T(\bx^\#)^\top {\by}^\#$, and $\partial h({\by}^\#)\ni\lim_n \bp^n = T({\bx}^\#)$}, which also implies that ${\bz}^\#$ is a saddle point of \eqref{eq:saddle-problem}.

Since \eqref{eq:general-bound-dynamic} is true for any saddle point $\bz^*$, by setting $\bz^*=\bz^\#$ in \eqref{eq:general-bound-dynamic}, one can conclude that $\bs^\#\triangleq \lim_k \bs^k\geq 0$ exists, where
$\bs^k\triangleq D_x(\bx^\#,{\bx}^k)+D_y(\by^\#,\by^k)-\fprod{T(\bx^\#)-T({\bx}^k),~\by^\#-\by^k}$
for $k\geq 0$. {Since 
$\lim_n \fprod{T(\bx^\#)-T({\bx}^{k_n}),\by^\#-\by^{k_n}}=0$ (from ${\bz}^{k_n}\rightarrow\bz^\#$)}, clearly
$\bs^\#=\lim_{n\rightarrow\infty}~\bs^{k_n}=0$, 
which together with \eqref{eq:min-eignvalue} implies that ${\bz}^k\rightarrow\bz^\#$.
\subsubsection{Convergence rate}
\label{sec:rate}
Recall that we initialize \nv{$\bv^0=\bw^0=\mathbf{0}$ and $\by^0=\mathbf{0}$;} hence, 
the inequality in \eqref{eq:dynamic-saddle-rate} can be written more explicitly as follows: let $\bar{\bxi}^K\triangleq {1\over K}\sum_{k=1}^K \bxi^k$, and $\bar{\bw}^K\triangleq {1\over K}\sum_{k=1}^K\bw^k$, then 
for any ${\bxi}$, $\bw$ and $\by$, and for all $K\geq 1$, \vspace*{-1mm}
{\small
\begin{align}
\mathcal{L}(\bar{\bxi}^K,\bar{\bw}^K,\by) -\mathcal{L}({\bxi},\bw,\bar{\by}^K)\leq \Theta(\bz)/K,\label{eq:saddle-rate-dynamic}
\end{align}}%
{where $\Theta(\bz)\triangleq  {1\over 2\gamma}\|\bw\|^2{+}\langle {\by},~\bw\rangle+\sum_{i\in\mathcal{N}}\Big[{1\over 2\tau_i}\|\xi_i-\xi_i^0\|^2+{1\over 2\kappa_i}\|y_i\|^2+$ $\langle \nv{g_i(\xi_i)-g_i(\xi_i^0)},y_i\rangle \Big]$ $+\sum_{k=0}^{K-1}E^{k+1}(\bz)$.} 
Given the step-size condition in \eqref{eq:step-rule-dual}, Schur complement condition guarantees that
$\nv{\small
\begin{bmatrix}
\frac{1}{\tau_i} &C_{g_i}\\
C_{g_i}& \frac{1}{\kappa_i}\\
\end{bmatrix}
\preceq
\begin{bmatrix}
\frac{2}{\tau_i} & {0}\\
{0} & \frac{2}{\kappa_i} \\
\end{bmatrix}}$ for any $i\in\cN$;
therefore,
{\small
\begin{align}
\label{eq:simple-C-bound-dynamic}
\Theta(\bz)\leq &\sum_{i\in\mathcal{N}}\Big[{1\over\tau_i}\|\xi_i-\xi_i^0\|^2+{1\over \kappa_i}\|y_i\|^2\Big]+{1\over 2\gamma}\|\bw\|^2{+\langle {\by},\bw\rangle+\sum_{k=0}^{K-1}E^{k+1}(\bz).}
\end{align}
}%
In the rest, fix 
$K\geq 1$ and a 
saddle-point $({\bxi}^*,\bw^*,\by^*)$
of $\cL$ in \eqref{eq:lagrangian-dual-implementation} such that $\bw^*=\mathbf{0}$. 
\nv{Let $\hat{y}^K\triangleq 
2\norm{y^*}\cP_{\cK^\circ}\big(-g(\bar{\bxi}^K)\big)/\|\cP_{\cK^\circ}\big(-g(\bar{\bxi}^K)\big)\|\in\cK^\circ$, and define $\hat{\by}^K=[\hat{y}^K_i]_{i\in\cN}$ such that $\hat{y}^K_i=\hat{y}^K$
} for all $i\in\cN$, i.e., $\hat{\by}^K=\ones\otimes\hat{y}^K\in\Ct$, and also define $\hat{\bw}^K\triangleq \norm{\cP_{{\cC^\circ}}(\bar{\by}^K)}^{-1}\cP_{{\cC^\circ}}(\bar{\by}^K)$, where $\cC\supset\Ct$ defined in \eqref{eq:Cd} is a closed convex cone and $\cC^\circ$ denotes its polar cone. Note that $\hat{\by}^K\in\cC$ and $\hat{\bw}^K\in\cC^\circ$ imply $\fprod{\hat{\by}^K,\hat{\bw}^K}\leq 0$.
Recall that every closed convex cone $\cQ\subset\reals^m$ induces an orthogonal decomposition on $\reals^m$, i.e., according to Moreau decomposition, for any $y\in\reals^m$, there exist $y^1\in\cQ$, and $y^2\in\cQ^\circ$ such that $y=y^1+y^2$ and $y^1\perp y^2$; in particular, $y^1=\cP_\cQ(y)$ and $y^2=\cP_{\cQ^\circ}(y)$. Thus, 
$\fprod{\hat{\bw}^K,~\bar{\by}^K}=\fprod{\hat{\bw}^K,~\cP_{\cC}(\bar{\by}^K)+\cP_{\cC^\circ}(\bar{\by}^K)}=\norm{\cP_{{\cC^\circ}}(\bar{\by}^K)}={d_{\cC}(\bar{\by}^K)}$.
Note that for each $i\in\cN$ we have \nv{$\bar{y}_i^{K}\in\cK^\circ$} since $y_i^k\in\cK^\circ$ for all $k=1,\ldots,K$ and $\cK$ is convex; hence, $\sigma_{\cK}(\bar{y}_i^K)=0$ for $i\in\cN$. Moreover, $\hat{\bw}^K\in\cC^\circ$ implies $\sigma_{\cC}(\hat{\bw}^K)=\ind{\cC^\circ}(\hat{\bw}^K)=0$; and since $\Ct\subset\cC$, we also have $\sigma_{\Ct}(\hat{\bw}^K)\leq\sigma_{\cC}(\hat{\bw}^K)=0$. Therefore, we can conclude that $\sigma_{\Ct}(\hat{\bw}^K)=0$ since $\mathbf{0}\in\Ct$. {These observations} 
imply that
{\small
\begin{align}
\label{eq:Lagrangian_1}
\cL(\bxi^*,\hat{\bw}^K,\bar{\by}^K)=\varphi(\bxi^*)-\sum_{i\in\cN}\fprod{\nv{g_i({\xi}_i^*)},~\bar{y}_i^K}-{d_{\cC}(\bar{\by}^K)}.
\end{align}}%
{Similarly,} from the definition of \nv{$\hat{y}^K\in\cK^\circ$},
{\small $-\sum_{i\in\cN}\fprod{g_i(\bar{\xi}_i^K),~\hat{y}^K}=2\norm{y^*}d_{\cK}\Big(-g(\bar{\bxi}^K)\Big)$},
and since $\hat{\by}^K\in \Ct$, we also have
$\fprod{\bar{\bw}^K,~\hat{\by}^K}-\sigma_{\Ct}({\bar{\bw}^K}) \leq \sup_{\substack{\bw}}\fprod{\bw,~\hat{\by}^K}-\sigma_{\Ct}({\bw})= \ind{\Ct}(\hat{\by}^K)=0$.
Note \nv{$\sigma_{\mathcal{K}}(\hat{y}^K)=0$ since $\hat{y}^K\in\cK^\circ$}. Thus, 
{we conclude that $\cL$ satisfies}
\nv{\small
\begin{align}
\label{eq:Lagrangian_2}
\cL(\bar{\bxi}^K,\bar{\bw}^K,\hat{\by}^K)\geq\varphi(\bar{\bxi}^K)+2\norm{y^*}d_{\cK}\Big(-g(\bar{\bxi}^K)\Big).
\end{align}}%
Combining \eqref{eq:Lagrangian_1} and \eqref{eq:Lagrangian_2}, we get
\nv{\small
\begin{multline}
\label{eq:Lagrangian_difference_1}
\cL(\bar{\bxi}^K,\bar{\bw}^K,\hat{\by}^K)-\cL(\bxi^*,\hat{\bw}^K,\bar{\by}^K) \\
\geq \varphi(\bar{\bxi}^K)-\varphi(\bxi^*)+2\norm{y^*}d_{\cK}\Big(-g(\bar{\bxi}^K)\Big)+d_{\cC}(\bar{\by}^K)+\sum_{i\in\cN}\fprod{g_i(\xi_i^*),~\bar{y}_i^K}.
\end{multline}}%
Moreover, $\fprod{\hat{\by}^K,~\hat{\bw}^K}\leq 0$, \eqref{eq:saddle-rate-dynamic} and \eqref{eq:simple-C-bound-dynamic} imply that
{\small
\begin{align}
\label{eq:Lagrangian_difference_bound_1}
\cL(\bar{\bxi}^K,\bar{\bw}^K,\hat{\by}^K)-\cL(\bxi^*,\hat{\bw}^K,\bar{\by}^K)\leq\Theta(\hat{\bz}^K)/K\leq \frac{\Lambda_1+\sum_{k=0}^{K-1}E^{k+1}(\hat{\bz}^K)}{K}\triangleq\frac{\Lambda(\gamma,\beta)}{K},
\end{align}}%
where $\hat{\bz}^K=[{\bxi^*}^\top~(\hat{\bw}^K)^\top~(\hat{\by}^K)^\top]^\top$ and \nv{$\Lambda_1\triangleq\frac{1}{2\gamma}+\sum_{i\in\mathcal{N}}\big[{1\over \tau_i}\|\xi^*_i-\xi_i^0\|^2+\frac{4}{\kappa_i}\|y^*\|^2\big]$}.
Recall that we fixed a saddle point $(\bxi^*,\bw^*,\by^*)$ such that $\bw^*=\mathbf{0}$; hence, we have $\cL(\bxi^*,\bw^*,\by^*)=\varphi(\bxi^*)$ and $\sigma_{\Ct}(\bw^*)=0$. Moreover, since $(\bxi^*,\bw^*,\by^*)$ is a saddle-point, 
we have $\mathcal{L}(\bar{\bxi}^K,{\bw}^*,\by^*)-\mathcal{L}({\bxi}^*,\bw^*,\by^*) \geq 0$ and $\cL(\bxi^*,{\bw}^*,\by^*)-\cL(\bxi^*,\bw^*,\bar{\by}^K)\geq 0$; therefore, these facts imply that
\nv{\small
\begin{align}\label{eq:lower-bound-dynamic-2}
{\sum_{i\in\cN}\langle g_i(\xi^*_i),~\bar{y}_i^K\rangle\geq 0},
\qquad \varphi(\bar{\bxi}^K)-\varphi(\bxi^*)+\norm{y^*}d_{\cK}\Big(-g(\bar{\bxi}^K)\Big) \geq 0,
\end{align}}%
where we used 
\nv{$y^*\in\cK^\circ$, 
i.e., $\fprod{y^*,y}{\leq}\fprod{y^*,\cP_{\cK^\circ}(y)}\leq\norm{y^*}d_{\cK}(y)$} for all $y\in\reals^m$.
Therefore, combining \eqref{eq:Lagrangian_difference_1}, \eqref{eq:Lagrangian_difference_bound_1}, and \eqref{eq:lower-bound-dynamic-2} gives us the {\it infeasibility} and {\it consensus} results in \eqref{eq:infeasibility} and also the upper bound in \eqref{eq:suboptimality}; while \nv{the inequality on the left in~\eqref{eq:lower-bound-dynamic-2}} gives us the lower bound for the {\it suboptimality}.

\nv{To show that $\Lambda(\gamma,\beta)$ is finite and independent of $K$}, we 
bound $\sum_{k=0}^{K-1}E^{k+1}(\hat{\bz}^K)$. 
\nv{As in~\eqref{eq:error-term-sum-bound}, using \eqref{eq:error-term-bound}, \eqref{eq:induction} and \eqref{eq:iterate-bound}, we get}
\vspace*{-2mm}
{\small
\begin{align}\label{eq:E_bound}
\sum_{k=0}^{K-1}E^{k+1}(\hat{\bz}^K)\leq
&3\gamma\max_{k=0,\ldots,K-1}\{\tfrac{1}{\gamma}\norm{\bw^{k+1}-\hat{\bw}^K}+\norm{\by^{k+1}-\hat{\by}^K}\}\sum_{k=0}^{K-1}\norm{\be^{k+1}}\nonumber \\
\nv{\triangleq} &\nv{\Lambda_2}\leq 3\nu C_0\Big(1+\sqrt{8A_0\gamma}+\gamma\big(24\nu C_0+\nv{3\sqrt{N}\norm{y^*}}\big)\Big),
\end{align}}%
-- recall $\sum_{k=0}^\infty\norm{\be^{k+1}}=\nu C_0$.
Below we specify the bound in \eqref{eq:E_bound} for both cases.

\nv{For CASE 1 where $B$ is known, $\nu=4N^{\frac{3}{2}}\Gamma B$ (see~\eqref{eq:e-bound}) and $\norm{y^*}\leq B$; hence, using these facts and the bound on $A_0$ given in \eqref{eq:A_bound} within \eqref{eq:E_bound}, we get}
\vspace*{-2mm}
\nv{\small
\begin{align*}
\Lambda_2\leq 
N^{\frac{3}{2}}\Gamma{C_0} B~\cO\Big(1+\sqrt{\gamma N(B\bar{R}_x^2+B^2\bar{C}_g)}+\gamma {N^{\frac{3}{2}}\Gamma{C_0} B}\Big).
\end{align*}}%
Moreover, the second inequality in~\eqref{eq:A_bound} implies \nv{$\Lambda_1=
\cO(\frac{1}{\gamma}+N(B\bar{R}_x^2+B^2\bar{C}_g)+\gamma NB^2)$}.
\nv{Our aim is to optimize the $\cO(1)$ constant of $\Lambda_1+\Lambda_2$ via 
carefully selecting the free parameter $\gamma$. Setting 
{$\gamma=(N^{3/2}\Gamma C_0 B)^{-1}$} gives $\Lambda(\gamma,\beta)=\cO\Big(NB(\bar{R}_x^2+\bar{C}_g B)+N^{3/2}\Gamma C_0 B\Big(1+\sqrt{\tfrac{\bar{R}_x^2+\bar{C}_g B}{N^{1/2}\Gamma C_0}}\Big)\Big)$ which implies the $N$ dependency in \eqref{eq:Lambda_Bknown}. 
}

For CASE 2 where $B$ is not known, $\nu=\frac{24}{23}N\Gamma(\norm{\by^*}+\sqrt{\frac{8A_0}{\gamma}})$ \sa{-- see the discussion below~\eqref{eq:error-bound-recursive}; hence, from~\eqref{eq:E_bound}, we get $\Lambda_2\leq \cO(N^2\Gamma^{2}{C_0^2}(A_0+{\gamma} N\sa{\max\{1,\norm{y^*}^2\}}))$. For the sake of simplicity, suppose $\norm{y^*}\geq 1$.}
\nv{Moreover, $\Lambda_1=\cO(\frac{1}{\gamma}+\bar{A}_0)$, and since $A_0\leq\bar{A}_0$ -- see~\eqref{eq:A_bound}, $\Lambda_1+\Lambda_2=\cO(\tfrac{1}{\gamma}+N^2\Gamma^2{C_0^2}\bar{A}_0)$.}
Selecting \sa{$\gamma=N^{\frac{3}{2}}\Gamma {C_0}\bar{R}_x^2$}, \eqref{eq:A_bound} and the bound on $\beta$ 
in \eqref{eq:beta-cond-sol} together imply that
$\Lambda_1+\Lambda_2=\cO(N^{\frac{9}{2}}\Gamma^3{C_0^3}\bar{R}_x^2\sa{\norm{y^*}^2})$, assuming 
\sa{$N^{\frac{3}{2}}\Gamma>\bar{C}_g/\bar{R}_x^2$ and $N>1/\bar{R}_x^2$} which are reasonable since we are interested in the bounds when $N$ is large. \nv{Moreover, when $g_i$'s are linear functions ($L_{g_i}=0$) the bound $\bar{A}_0$ can be simplified, i.e.,
$\bar{A}_0=N(\bar{R}_x^2+(\bar{C}_g+\frac{5\gamma}{2})\norm{y^*}^2)$. Therefore, 
choosing \sa{$\gamma=(N^{\frac{3}{2}}\Gamma{C_0})^{-1}$}, we 
get $\Lambda_1+\Lambda_2=\cO(N^3\Gamma^2{C_0^2}(\bar{R}_x^2+\bar{C}_g\norm{y^*}^2))$.}
\begin{remark}\label{rem:stronger-cond}
For CASE 1, assuming 
$\sum_{k=0}^\infty \alpha^{q_k} (k+1)^2<+\infty$ in addition to $C_0<+\infty$, one can observe that using \eqref{eq:mu-bound} and \eqref{eq:e-bound}, the bound $\cO(1)$ bound takes a simpler form: $\Lambda(\gamma,\beta)=\Lambda_1+\sum_{k=0}^{K-1} E^{k+1}(\hat{\bz}^K)\leq \frac{1}{\gamma}+N(B\bar{R}_x^2+B^2\bar{C}_g)+\gamma NB^2+12N^{\frac{3}{2}}\Gamma B[\sum_{k=1}^K\alpha^{q_{k-1}}k+4\sqrt{N}B\gamma\sum_{k=1}^K\alpha^{q_{k-1}}k(k+1)]$.
\end{remark}
\section{Fully distributed step-size rule}
\label{sec:stepsize}
Recall that step-size selection rule in \eqref{eq:step-rule-dual} of Theorem~\ref{thm:dynamic-rate} requires some sort of coordination among the nodes in $\cN$ because there is a fixed $\gamma>0$ coupling and affecting all nodes' step-size choice. To overcome this issue, we will define $\gamma_i>0$ for each node, and let the nodes to choose this parameter independently. Let {$\mathbf{D}_\gamma\triangleq\diag([\frac{1}{\gamma_i}\id_{m}]_{i\in\cN})\succ 0$}
and define {$\bgm\triangleq [\gamma_i]_{i\in\cN}$ and
$\widehat{\cC}\triangleq\Big\{{\bp}\in\cY:\ \exists \bar{y}\in\reals^m\ \hbox{s.t.}\ {1\over \sqrt{\gamma_i}}~p_i=\bar{y}\quad \forall i\in\cN, \quad \norm{\bar{y}}\leq 2B  \Big\}$ -- here, $\bp=[{p}_i]_{i\in\cN}$.
}%
Recall the definition of Bregman distance function given in Definition~\ref{def:bregman}: $D_x(\bx,\bar{\bx})=\frac{1}{2}\norm{\bxi-\bar{\bxi}}_{\mathbf{D}_\tau}^2+\frac{1}{2}\norm{\bw-\bar{\bw}}_{\mathbf{D}_\gamma}^2$. Switching to $\mathbf{D}_\gamma$ as defined above, \eqref{eq:pock-pd-3-v} should be replaced with
$\bw^{k+1}\gets{\argmin_{\bw} \sigma_{\Ct}(\bw)-\langle {\by}^k,~\bw \rangle +{1\over 2}\|\bw-\bv^k\|_{\mathbf{D}_\gamma}^2}.$
Using the change of variables $\hat{\bw}\triangleq\mathbf{D}_\gamma^{{1\over 2}}\bw$, it can be rewritten as
{\small
\begin{align}\label{eq:v-update}
{\bw^{k+1}\gets\mathbf{D}_\gamma^{-{1\over 2}}\argmin_{\hat{\bw}} \sigma_{\widehat{\cC}}~(\hat{\bw})+{1\over 2}\|\hat{\bw}-(\mathbf{D}_\gamma^{{1\over 2}}\bv^k+\mathbf{D}_\gamma^{-{1\over 2}}\by^k)\|^2}, 
\end{align}
}%
where we use the fact that {$\sigma_{\Ct}(\mathbf{D}_\gamma^{-{1\over 2}}\hat{\bw})=\sigma_{\widehat{\cC}}~(\hat{\bw})$}. Now, we can write \eqref{eq:v-update} in a proximal form and using Moreau's decomposition, we get
{\small
\begin{align*}
\bw^{k+1}&=\Dg^{-{1\over 2}}\prox{\sigma_{\widehat{\cC}}}(\Dg^{{1\over 2}}\bv^k+\Dg^{-{1\over 2}}\by^k) =\Dg^{-{1\over 2}}\big(\Dg^{{1\over 2}}\bv^k+\Dg^{-{1\over 2}}\by^k-\cP_{\widehat{\cC}}(\Dg^{{1\over 2}}\bv^k+\Dg^{-{1\over 2}}\by^k)\big).
\end{align*}}%
{Note $\bp\in\widehat{\cC}$ implies that $\Dg^{{1\over 2}}\bp=\ones_N\otimes\bar{y}$ for some $\bar{y}\in\reals^m$ such that $\norm{\bar{y}}\leq 2B$.} Therefore, for $\by=[y_i]_{i\in\cN}\in\reals^{n_0}$, the projection of $\Dg^{-{1\over 2}}\by$ onto $\widehat{\cC}$ can be computed as 
{\small
\begin{align}
\cP_{\widehat{\cC}}(\Dg^{-{1\over 2}}\by)&=\argmin_{\bp\in\widehat{\cC}} \tfrac{1}{2}\Big\|\Dg^{-{1\over 2}}\by-\bp\Big\|^2=\Dg^{-{1\over 2}} \Big( \ones \otimes \argmin_{\norm{\bar{y}}\leq 2B}{1\over 2}\Big\|\Dg^{-{1\over 2}}\by-\Dg^{-{1\over 2}}(\ones\otimes\bar{y})\Big\|^2\Big) \nonumber \\
&=\Dg^{-{1\over 2}} \cP_\cB\Big(\frac{1}{\sum_{i\in\cN}\gamma_i}(\one\one^\top\otimes\id_m)\Dg^{-1}\by\Big).
\end{align}
}%
Let 
$\cP_\gamma(\by)\triangleq \ones_N \otimes \cP_{\cB_0}\left(\frac{1}{\sum_{i\in\cN}\gamma_i}\sum_{i\in\cN}\gamma_i y_i\right)$; hence, we get that
{\small
\begin{align}\label{eq:w-update-moreau}
{\bw^{k+1}=\Dg^{-1}\Big(\Dg\bv^k+\by^k-\cP_\gamma(\Dg\bv^k+\by^k)\Big)}.
\end{align}
}%
Thus, we propose approximating \eyy{$\cP_\gamma(\cdot)$} using an approximate convex combination operator $\cR_\gamma^k(\cdot)=[\cR_{i}^k(\cdot)]_{i\in\cN}$ such that it can be computed in a distributed way, i.e., $\cR_{i}^k(\cdot)$ can be computed at $i\in\cN$ using local communication. More precisely, suppose $\cR_\gamma^k$ satisfies a slightly modified version of Assumption~\ref{assump:approximate-average}, where \eqref{eq:approx_error-for-full-vector} is replaced with
\begin{align}
\label{eq:approx_error-for-full-vector-gamma}
{\mathcal{R}_\gamma^k(\bw)\in\cB,\qquad \|\mathcal{R}_\gamma^k(\bw)-\mathcal{P}_{\gamma}(\bw)\| \leq N~\Gamma \alpha^{q_k}\norm{\bw}},\quad\forall~\bw\in\reals^{n_0}.
\end{align}
Provided that such an operator exists, instead of \eqref{eq:inexact-rule-dual}, we set $\bv^{k+1}$ as follows:
{\small
\begin{align}\label{eq:v-update-moreau}
{\bv^{k+1}\gets\Dg^{-1}\Big(\Dg\bv^k+\by^k-\cR_\gamma^k(\Dg\bv^k+\by^k)\Big)}.
\end{align}
}%
With this modification, we can still show that the iterate sequence converges to a primal-dual optimal solution with $\cO(1/K)$ ergodic rate provided that primal-dual step-sizes $\{\tau_i,\kappa_i\}_{i\in\cN}$ and $\{\gamma_i\}_{i\in\cN}$ are chosen such that 
\nv{$\tau_i=(\max\{1, L_{f_i}+\beta L_{g_i}\}+C_{g_i})^{-1}$, $\kappa_i=(C_{g_i}+\frac{5\gamma_i}{2})^{-1}$}
for all $ i\in\mathcal{N}$.

In the rest of this section, for both undirected and directed time-varying communication networks, we provide an operator $\cR_\gamma^k$ satisfying \eqref{eq:approx_error-for-full-vector-gamma}. For $\by=[y_i]_{i\in\cN}\in\cY$, define
${p}_\gamma(\by)\triangleq{1\over \sum_{i\in\cN}\gamma_i}\sum_{i\in\mathcal{N}}\gamma_i y_i;$
hence, we have $\cP_\gamma(\by)=\ones_N\otimes \cP_{\cB_0}\big({p}_\gamma(\by)\big)$. Therefore, we should consider distributed approximation of ${p}_\gamma(\by)$. Given $y_i\in\reals^m$ and $\gamma_i>0$, which are only known at node $i\in\cN$, we next discuss extensions of techniques discussed in Section~\ref{sec:inexact-averaging} to compute the convex combination
${\sum_{i\in\cN} \gamma_i y_i/\sum_{i\in\cN}\gamma_i}$.

First, suppose that $\{\cG^t\}$ is a time-varying undirected graph and $\{V^t\}_{t\in\integers_+}$ be a corresponding sequence of weight matrices satisfying Assumption~\ref{assump:undirected}. For $\bw=[w_i]_{i\in\cN}\in\cY$ such that $w_i\in\reals^m$ for $i\in\cN$, define
{\small
\begin{equation}
\label{eq:approx-average-dual-gamma}
\cR_\gamma^k(\bw)\triangleq\mathcal{P}_{\mathcal{B}}\left(\left(\diag(W^{t_k+q_k,t_k}\bgm)^{-1}W^{t_k+q_k,t_k}\otimes\id_m\right)~\Dg^{-1}\bw\right)
\end{equation}
}%
to approximate $\mathcal{P}_{\gamma}(\cdot)$ in \eqref{eq:w-update-moreau}. Note that $\mathcal{R}_\gamma^k(\cdot)$ can be computed in a \emph{distributed fashion} requiring $q_k$ communications with the neighbors for each node.
Using Lemma~\ref{lem:approximation}, it is easy to show that $\cR_\gamma^k$ given in \eqref{eq:approx-average-dual-gamma} satisfies the condition in \eqref{eq:approx_error-for-full-vector-gamma}.

Second, suppose that $\{\cG^t\}$ is a time-varying $M$-strongly-connected directed graph, and $\{V^t\}_{t\in\integers_+}$ be the corresponding weight-matrix sequence as defined in \eqref{eq:directed-weights} within Section~\ref{sec:directed} -- so that \eqref{eq:approx-average-dual-gamma} can be computed over time-varying \emph{directed} network.
Given any $\bw=[w_i]_{i\in\cN}$ and $\{\gamma_i\}_{i\in\cN}$, the results in~\cite{nedic2015distributed} immediately imply that for any $s\in\integers_+$, the vector $\big(\diag(W^{t,s}\bgm)^{-1}W^{t,s}\otimes\id_m\big)\mathbf{D}_\gamma^{-1}\bw$ converges to the consensus convex combination vector
$\ones_N\otimes p_\gamma(\bw)$ with a geometric rate as $t$ increases. Indeed, this can be trivially achieved by using a different initialization for the {push-sum method}. 
Next, we state a slightly modified version of the convergence result in Lemma~\ref{lem:push-sum}.
\begin{lemma}
\label{lem:push-sum-gamma}
Suppose that the digraph sequence $\{\cG^t\}_{t\geq 1}$ is uniformly strongly connected ($M$-strongly connected), where $\cG^t=(\cN,\cE^t)$. Given node-specific data $\{w_i\}_{i\in\cN}$ $\subset\reals^m$ and $\{\gamma_i\}_{i\in\cN}\subset\reals_{++}$,
for any fixed integer $s\geq 0$, the following bound holds for all integers $t>s$:
\vspace*{-2mm}%
{\small
\begin{align*}
\norm{\diag(W^{t,s}\bgm\otimes\id_m)^{-1}(W^{t,s}\otimes\id_m)~\Dg^{-1}\bw - \one_N\otimes p_\gamma(\bw)}\leq {8\sqrt{N}\over \gamma_{\min}\delta} \sum_{i\in\cN}\gamma_i\norm{w_i}~\alpha^{t-s-1},
\end{align*}
}%
for some $\delta\geq {1\over N^{NM}}$ and $0<\alpha\leq \left(1-{1\over N^{NM}}\right)^{1\over M}$, where  $N=|\cN|$ and $\gamma_{\min}=\min_{i\in\cN}\gamma_i$.
\end{lemma}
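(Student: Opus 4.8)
The plan is to reduce the claim to the push-sum convergence estimate already recorded in Lemma~\ref{lem:push-sum}, by reading off the $i$-th $\reals^m$-block of the vector on the left-hand side as a \emph{weighted} push-sum ratio. Writing $a_i^t\triangleq\sum_{j\in\cN}W^{t,s}_{ij}\gamma_j w_j\in\reals^m$ and $b_i^t\triangleq\sum_{j\in\cN}W^{t,s}_{ij}\gamma_j>0$, the $i$-th block of $\diag(W^{t,s}\bgm\otimes\id_m)^{-1}(W^{t,s}\otimes\id_m)\Dg^{-1}\bw$ equals $a_i^t/b_i^t$, which is exactly the push-sum ratio $\eta_i^t/\nu_i^t$ produced by \eqref{push-sum} when the mass is initialized at time $s$ with $\eta_i^s=\gamma_i w_i$ and $\nu_i^s=\gamma_i$, rather than with $\nu^s=\one$; the target $\one_N\otimes p_\gamma(\bw)$ then records the limiting weighted average $p_\gamma(\bw)=\big(\sum_j\gamma_j w_j\big)/\big(\sum_j\gamma_j\big)$. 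First I would invoke the analysis underlying Lemma~\ref{lem:push-sum} from~\cite{nedic2015distributed}: since each $V^t$ in \eqref{eq:directed-weights} is column stochastic, the products $W^{t,s}$ preserve total mass (i.e. $\one^\top W^{t,s}=\one^\top$), and there exist stochastic vectors $\{\phi^t\}$ and $\delta\geq N^{-NM}$ such that $|W^{t,s}_{ij}-\phi_i^t|\leq 4\,\alpha^{t-s-1}$ and $(W^{t,s}\one)_i\geq\delta$ for all $i,j\in\cN$ and $t>s$, with $\alpha$ as in Lemma~\ref{lem:push-sum}.

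Second, I would carry out the core estimate blockwise using the decomposition $a_i^t/b_i^t-p_\gamma(\bw)=\big(a_i^t-p_\gamma(\bw)\,b_i^t\big)/b_i^t$. Writing $S_w\triangleq\sum_j\gamma_j w_j$ and $S_\gamma\triangleq\sum_j\gamma_j$, the key cancellation is $\phi_i^t\big(S_w-p_\gamma(\bw)S_\gamma\big)=\mathbf{0}$, which lets me replace $a_i^t$ and $b_i^t$ by their mass-ratio surrogates $\phi_i^t S_w$ and $\phi_i^t S_\gamma$ at the cost only of perturbation terms: $\norm{a_i^t-\phi_i^t S_w}\leq 4\alpha^{t-s-1}\sum_j\gamma_j\norm{w_j}$ and $|b_i^t-\phi_i^t S_\gamma|\leq 4\alpha^{t-s-1}S_\gamma$, where I also use $\norm{p_\gamma(\bw)}\leq\big(\sum_j\gamma_j\norm{w_j}\big)/S_\gamma$. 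Combining these gives $\norm{a_i^t-p_\gamma(\bw)b_i^t}\leq 8\,\alpha^{t-s-1}\sum_j\gamma_j\norm{w_j}$, while the denominator is bounded below via $b_i^t\geq\gamma_{\min}\sum_j W^{t,s}_{ij}=\gamma_{\min}(W^{t,s}\one)_i\geq\gamma_{\min}\delta$. Hence each block satisfies $\norm{a_i^t/b_i^t-p_\gamma(\bw)}\leq \tfrac{8}{\gamma_{\min}\delta}\,\alpha^{t-s-1}\sum_j\gamma_j\norm{w_j}$.

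Finally, I would assemble the full-vector bound by stacking the $N$ blocks, so that the Euclidean norm obeys $\big(\sum_{i\in\cN}\norm{a_i^t/b_i^t-p_\gamma(\bw)}^2\big)^{1/2}\leq\sqrt{N}\,\max_{i}\norm{a_i^t/b_i^t-p_\gamma(\bw)}$, producing exactly the factor $\tfrac{8\sqrt{N}}{\gamma_{\min}\delta}\,\alpha^{t-s-1}\sum_{i\in\cN}\gamma_i\norm{w_i}$ claimed in the statement. I expect the only nontrivial step to be importing the two facts quoted in the first paragraph in precisely that form — the existence of the stochastic surrogate sequence $\{\phi^t\}$ with geometric perturbation decay $|W^{t,s}_{ij}-\phi_i^t|\leq 4\alpha^{t-s-1}$ and the uniform lower bound $(W^{t,s}\one)_i\geq\delta\geq N^{-NM}$ — since these encode all the $M$-strong-connectivity and column-stochasticity content of~\cite{nedic2015distributed}; everything after that is elementary normalization and constant-chasing, which is exactly what explains the appearance of the constants $8$, $\sqrt{N}$, $\gamma_{\min}^{-1}$, $\delta^{-1}$ and the shifted exponent $\alpha^{t-s-1}$.
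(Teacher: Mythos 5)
Your proposal is correct and follows essentially the same route as the paper's proof: both import the same two facts from Corollary~2 of~\cite{nedic2015distributed} (the stochastic surrogate sequence $\{\phi^t\}$ with $|[W^{t,s}-\phi^t\one^\top]_{ij}|\leq 4\alpha^{t-s-1}$ and the lower bound $\delta\geq N^{-NM}$), perform the identical blockwise decomposition of $a_i^t/b_i^t-p_\gamma(\bw)$ (your ``key cancellation'' is exactly the paper's common-denominator computation with $\Delta^{t,s}=W^{t,s}-\phi^t\one^\top$), bound the denominator by $b_i^t\geq\gamma_{\min}\delta$ in the same way, and stack the $N$ blocks to obtain the $\sqrt{N}$ factor.
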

\begin{proof}
The proof follows from Corollary 2 and the proof of Lemma~1 in \cite{nedic2015distributed}.
\end{proof}
Thus, $\cR_\gamma^k(\cdot)$ defined in \eqref{eq:approx-average-dual-gamma} satisfies the requirement $\|\mathcal{R}_\gamma^k(\bw)-\mathcal{P}_{\gamma}(\bw)\|\leq N \Gamma~\alpha^{q_k}\norm{\bw}$ in \eqref{eq:approx_error-for-full-vector-gamma} for $\Gamma=\frac{\norm{\bgm}}{\gamma_{\min}\sqrt{N}}{8\over \delta\alpha}$ and for some $\alpha\in(0,1)$ and $\delta>0$ as stated in Lemma~\ref{lem:push-sum-gamma}.

\section{\sa{A Distributed Algorithm for Static Network Topology}}
\label{sec:static-nonlinear}
We extend the results in \cite{aybat16_dual_static} to 
nonlinear constraint functions $\{g_i\}_{i\in\cN}$. 
Given an \emph{undirected}, \emph{static} communication network $\cG=(\cN,\cE)$,
following the discussion in Section~\ref{sec:static}, the corresponding SP problem for the static network 
is given as $\min_{\substack{\bxi},\bw} \max_{\substack{\by}}\big\{\sum_{i\in \cN}\varphi_i(\xi_i)-\langle g_i(\xi_i),~y_i \rangle -\langle \bw,~M\by\rangle:\ y_i\in\cK^\circ,\ \  \forall i\in\cN \big\}$. In Fig.~\ref{alg:DPDA-S}, we propose a modified version of DPDA-S algorithm~\cite{aybat16_dual_static} 
(see Section \ref{sec:static}) to solve \eqref{eq:central_problem} over $\cG$.
\begin{figure}[h]
\vspace*{-2mm}
\centering
\framebox{\parbox{0.98\columnwidth}{
{\small
\textbf{Algorithm DPDA-S} ( $\bxi^{0},\gamma,\{\tau_i,\kappa_i\}_{i\in\cN}$ ) \\[1.5mm]
Initialization: $y_i^0\gets 0$, $s_i^0\gets 0$, \quad $i\in\cN$\\
Step $k$: ($k \geq 0$)\\
\text{ } 1. $\xi_i^{k+1}\gets\prox{\tau_i\rho_i}\Big(\xi_i^k-\tau_i\Big(\nabla f_i(\xi_i^k)-\bJ g_i^\top y^k_i\Big)\Big)$, \quad $p_i^{k+1}\gets \sum_{j\in\cN_i}(s_i^k-s_j^k)$, $\quad i\in\cN$\\[0.1mm]
\text{ } 2. $y_i^{k+1}\gets\cP_{\cK^\circ\cap \cB_0}\Big[y_i^k-\kappa_i\Big(2g_i(\xi_i^{k+1})-g_i(\xi_i^k)+\gamma p_i^{k+1}\Big)\Big], \quad i \in \cN$\\[0.1mm]
\text{ } 3. $s_i^{k+1}\gets y_i^{k+1}+\sum_{\ell=0}^{k+1} y_i^\ell, \quad i \in \cN$
}}}
\vspace*{-2mm}
\caption{\small Distributed Primal Dual Algorithm for Static $\cG$ (DPDA-S)}
\label{alg:DPDA-S}
\end{figure}

DPDA-S needs only \emph{one} communication round per iteration; moreover, since DPDA-S 
does not require inexact averaging (hence no error accumulation), its analysis is much simpler than and directly follows from the analysis of DPDA-D. The following theorem states the convergence rate for the iterates of DPDA-S. 
\begin{theorem}\label{thm:DPDA-S}
Suppose Assumptions~\ref{assum:functions},~\ref{assump:communication_general} with $\cG^t=\cG$ for $t\geq 0$ and \ref{assump:existence} hold. For any $\gamma>0$, let the primal-dual step-sizes $\{\tau_i,\kappa_i\}_{i\in\cN}$ be chosen such that
{\small
\begin{equation}\label{eq:step-rule-dual-static}
\tau_i=(\max\{1, L_{f_i}+\beta L_{g_i}\}+C_{g_i})^{-1},\quad \kappa_i=(C_{g_i}+{\gamma}(4d_{\max}+\tfrac{1}{2}))^{-1},\quad \forall\ i\in\cN.
\end{equation}}%
for some $\beta>0$. Given $B\in(0,\infty]$, let $\cB_0\triangleq\{y\in\reals^m:\ \norm{y}\leq {2B}\}$. Starting from $\bs^0=\by^0=\mathbf{0}$ and an arbitrary $\bxi^0$, let $\{(\bxi^k)\}_{k\geq 0}$ be the primal, and $\{\by^k\}_{k\geq 0}$ be the dual iterate sequence generated by DPDA-S, displayed in Fig.~\ref{alg:DPDA-S}.
For any $\gamma>0$, if $\beta>0$ is chosen as discussed below, then $\{({\bxi}^k,{\by}^k)\}_{k\geq 0}$ converges to $(\bxi^*,\by^*)$ such that $\by^*=\one \otimes y^*$ and $(\bxi^*,y^*)$ is {an optimal} primal-dual solution to \eqref{eq:central_problem}. Moreover, both infeasibility, $F(\bar{\bxi}^K,\bar{\by}^K)$, and suboptimality, $|\varphi(\bar{\bxi}^K)-\varphi({\bxi}^*)|$ are $\cO(1/K)$, i.e.,
{\small
\begin{eqnarray}
&&F(\bar{\bxi}^K,\bar{\by}^K)\triangleq \norm{M\bar{\by}}+\norm{y^*}d_{\mathcal{K}}\left(- g(\bar{\bxi}^K)\right)\leq \frac{\Lambda(\gamma,\beta)}{K}, \label{eq:infeasibility-static}\\
&&0\leq \varphi(\bar{\bxi}^K)-\varphi({\bxi}^*)+\norm{y^*} d_{\mathcal{K}}\left(-g(\bar{\bxi}^K)\right)\leq \frac{\Lambda(\gamma,\beta)}{K}-F(\bar{\bxi}^K,\bar{\by}^K),\label{eq:suboptimality-static}
\end{eqnarray}}%
for all $K\geq 1$, where $\Lambda(\gamma,\beta)=\frac{1}{2\gamma}+\sum_{i\in\cN}\frac{1}{\tau_i}\|\xi_i^*-\xi_i^0\|^2+\tfrac{4}{\kappa_i}\|y^*\|^2$. 

(CASE 1): If a dual bound is known, i.e., $B<\infty$, then \eqref{eq:infeasibility-static} and \eqref{eq:suboptimality-static} hold for $\beta=2B$; moreover, setting 
$\gamma=(NB)^{-1}$ gives $\Lambda(\gamma,\beta)=\cO(NB(\bar{R}_x^2+\bar{C}_gB+1))$.

\indent (CASE 2): If the dual bound does not exist, then set $B=\infty$ within DPDA-S. There exists $\bar{\beta}>0$ such that \eqref{eq:infeasibility-static} and \eqref{eq:suboptimality-static} hold for all $\beta\geq \bar{\beta}$; moreover, selecting \sa{$\gamma=\bar{R}_x^2\sqrt{N/d_{\max}}$ leads to $\Lambda(\gamma,\beta)=\cO(N^{\tfrac{3}{2}}\sqrt{d_{\max}}\bar{R}_x^2\max\{1,\norm{y^*}^2\})$ and \sa{$\bar{\beta}=\cO(\sqrt{N d_{\max}}~\max\{1,\norm{y^*}\})$} for $N$ sufficiently large\footnote{For simple bounds, we assume $N>1/\bar{R}_x^2$ and $\sqrt{N d_{\max}}\geq \bar{C}_g/\bar{R}_x^2$.}}.
\end{theorem}
\begin{proof}
The results follow from the analysis of DPDA-D in Section \ref{sec:convergence} and \cite{aybat16_dual_static}.
\end{proof}
\begin{remark}
\sa{In Theorem~2 of \cite{aybat16_dual_static}, the 
rate result is provided for the case $g_i$ is affine for $i\in\cN$.
For this case, a dual bound is not needed; 
hence, the suboptimality and infeasibility rate is $\cO(\Lambda/K)$ for some 
$\Lambda=\cO(N(\bar{R}_x^2+\bar{C}_g\norm{y^*}^2))$ when $\gamma=1/N$.}
\end{remark}
\section{Computing a dual bound}
\label{sec:dual-bound}
\noindent Recall that the definition of $\Ct$ in \eqref{eq:C} involves a bound $B$ such that $\norm{y^*}\leq B$ for some dual optimal solution $y^*$. In this section, we show that given a Slater point we can find a ball containing the optimal dual set for problem \eqref{eq:central_problem}. 
To this end, we first derive some results without assuming convexity.

Let $\varphi:\reals^n\rightarrow\reals\cup\{+\infty\}$ and $g:\reals^n\rightarrow\reals^m$ be arbitrary functions of $\bxi$, and $\cK\subset\reals^m$ be a cone. For now, we do not assume convexity for $\varphi$, $g$, and $\cK$, which are the components of the following generic problem \vspace*{-1mm}
{\small
\begin{equation}
\label{eq:generic problem}
\varphi^*\triangleq\min_{\bxi} \varphi(\bxi)\quad \hbox{s.t.}\quad g(\bxi)\in\nv{-\cK}\ :\ y\in\cK^\circ,
\end{equation}}%
where $y\in\reals^m$ denotes the dual vector. 
Let $q$ denote the dual function, i.e.,
\vspace*{-1mm}%
{\small
\begin{equation}
q(y)\triangleq\left\{
       \begin{array}{ll}
         \inf_{\bxi}\varphi(\bxi)\nv{-} y^\top g(\bxi), & \hbox{if $y\in\cK^\circ$;} \\
         -\infty, & \hbox{o.w.}
       \end{array}
     \right.
\end{equation}}%
We assume that there exists $\hat{y}\in\cK^\circ$ such that $q(\hat{y})>-\infty$. Since $q$ is a closed concave function, this assumption implies that $-q$ is a \emph{proper} closed convex function. Next we show that for any $\bar{y}\in\dom q=\{y\in\reals^m:\ q(y)>-\infty\}$, the superlevel set $Q_{\bar{y}}\triangleq\{y \in \dom q:\ q(y)\geq q(\bar{y})\}\subset\cK^\circ$ is contained in a Euclidean ball centered at the origin, of which radius can be computed efficiently. A special case of this dual boundedness result is well known when $\cK=\reals^m_{+}$~\cite{uzawa58}
-- see Lemma~1.1 in~\cite{Nedic08_1J}; however, it is not trivial to extend this result to 
{an \emph{arbitrary} cone $\cK$} with $\intr(\cK)\neq\emptyset$. 
\begin{lemma}\label{dual-bound}
Let $\bar{\bxi}$ be a Slater point for \eqref{eq:generic problem}, i.e., $\bar{\bxi}\in\relint(\dom \varphi)$ such that $\nv{-}g(\bar{\bxi})\in\intr(\cK)$. Then for all $\bar{y}\in \dom q$, the superlevel set $Q_{\bar{y}}$ is bounded as follows,
\vspace*{-2mm}%
{\small
\begin{equation}
\label{eq:dual-bound-radius}
\norm{y}\leq (\varphi(\bar{\bxi})-q(\bar{y})) / r^*,\quad \forall y \in Q_{\bar{y}},
\end{equation}}%
where $0<r^*\triangleq \min_w\{\sa{-}w^\top g(\bar{\bxi}):\ \|w\|= 1,\ w\in \cK^*\}$.
Note that this is not a convex problem due to the equality constraint; instead, one can upper bound \eqref{eq:dual-bound-radius} using $0<\tilde{r}\leq r^*$, which can be efficiently computed by solving a convex problem
\vspace*{-2mm}%
{\small
\begin{equation}\label{dual-bound-problem-tight}
\tilde{r}\triangleq\min_w\{\nv{-}w^\top g(\bar{\bxi}):\ \|w\|_1= 1,\ w\in \cK^*\}.
\end{equation}}%
\end{lemma}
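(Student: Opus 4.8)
The plan is to read the radius bound directly off the defining inequality of the dual function, using a strict-positivity property of $r^*$ that the Slater condition supplies. Fix any $y\in Q_{\bar{y}}$; by definition $y\in\dom q\subseteq\cK^\circ$ and $q(y)\ge q(\bar{y})$. Evaluating the infimum in the definition of $q$ at the Slater point $\bar{\bxi}$ gives the one-line weak-duality estimate
\[
q(y)\le \varphi(\bar{\bxi})+y^\top g(\bar{\bxi}),\qquad\text{hence}\qquad -\,y^\top g(\bar{\bxi})\le \varphi(\bar{\bxi})-q(y)\le \varphi(\bar{\bxi})-q(\bar{y}).
\]
Since $y\in\cK^\circ$ and $g(\bar{\bxi})\in\cK$ we have $-\,y^\top g(\bar{\bxi})\ge 0$, so the right-hand side is nonnegative; this already settles the degenerate case $y=\mathbf{0}$.

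The crux of the argument, and the step I expect to be the main obstacle, is to establish $r^*>0$ and to convert the left-hand side into a multiple of $\norm{y}$. For $y\neq\mathbf{0}$ set $w\triangleq -y/\norm{y}$; since $\cK^*$ is a cone and $-y\in\cK^*$, we get $w\in\cK^*$ with $\norm{w}=1$, whence $-\,y^\top g(\bar{\bxi})=\norm{y}\,w^\top g(\bar{\bxi})\ge \norm{y}\,r^*$ by the very definition of $r^*$ as a minimum over unit vectors of $\cK^*$. The positivity $r^*>0$ is where Slater enters: because $g(\bar{\bxi})\in\intr(\cK)$ there is $\epsilon>0$ with $g(\bar{\bxi})+\epsilon u\in\cK$ for every $\norm{u}\le 1$, and for any unit $w\in\cK^*$ the choice $u=-w$ yields $0\le w^\top\bigl(g(\bar{\bxi})-\epsilon w\bigr)=w^\top g(\bar{\bxi})-\epsilon$, i.e. $w^\top g(\bar{\bxi})\ge\epsilon$ uniformly. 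The set $\{w\in\cK^*:\ \norm{w}=1\}$ is compact and continuous data attain their minimum, so $r^*\ge\epsilon>0$; it is nonempty unless $\cK^*=\{\mathbf{0}\}$, in which case $\cK^\circ=\{\mathbf{0}\}$ forces $\dom q=\{\mathbf{0}\}$ and the bound is trivial. Combining the two displays and dividing by $r^*$ gives $\norm{y}\le\bigl(\varphi(\bar{\bxi})-q(\bar{y})\bigr)/r^*$ for every $y\in Q_{\bar{y}}$, which proves boundedness of the superlevel set.

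It remains to justify the computable surrogate $\tilde{r}$. The same interior-point computation shows $w^\top g(\bar{\bxi})>0$ for every $w\in\cK^*\setminus\{\mathbf{0}\}$, and $\{w\in\cK^*:\ \norm{w}_1=1\}$ is compact and nonempty, so the minimum defining $\tilde{r}$ is attained and $\tilde{r}>0$. For the inequality $\tilde{r}\le r^*$ I would use positive homogeneity together with $\norm{w}_1\ge\norm{w}$: letting $w^*$ attain $r^*$ (so $\norm{w^*}=1$, $w^*\in\cK^*$), the rescaled vector $w'\triangleq w^*/\norm{w^*}_1\in\cK^*$ satisfies $\norm{w'}_1=1$ and $(w')^\top g(\bar{\bxi})=r^*/\norm{w^*}_1\le r^*$, since $\norm{w^*}_1\ge\norm{w^*}=1$ and $r^*>0$; hence $\tilde{r}\le (w')^\top g(\bar{\bxi})\le r^*$, so $\bigl(\varphi(\bar{\bxi})-q(\bar{y})\bigr)/\tilde{r}$ is a valid, if looser, upper bound on the radius. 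The tractability claim rests on the piecewise-linearity of $\norm{\cdot}_1$: restricted to any single orthant both the objective and $\norm{w}_1$ are linear in $w$, so computing $\tilde{r}$ reduces to convex conic programs over $\cK^*$; in particular, whenever $\cK^*$ lies in one orthant (e.g. $\cK=\reals^m_+$, where $\norm{w}_1=\mathbf{1}^\top w$ on $\cK^*$) it collapses to a single convex program.
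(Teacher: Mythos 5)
Your proof is correct, and it takes a genuinely shorter route through the main inequality than the paper does. The paper first proves the bound with $r$ being \emph{any} radius such that the ball $\cB(g(\bar{\bxi}),r)\subset\cK$ (choosing $u=y/\norm{y}$ in the ball condition and using $y\in\cK^\circ$), and then spends the bulk of the proof identifying the \emph{largest} such radius with the formula $r^*=\min\{w^\top g(\bar{\bxi}):\ \norm{w}=1,\ w\in\cK^*\}$: it introduces $\gamma(r)\triangleq\sup_{\norm{u}\leq 1}d_\cK\big(g(\bar{\bxi})+ru\big)$, writes $d_\cK$ as an inf--sup, interchanges $\inf_{\bxi}$ and $\sup_w$ by compactness and bilinearity, and argues the maximizer lies on the unit sphere to arrive at \eqref{gamma-radius-problem} and $(P_1)$. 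You bypass that entire variational characterization: you apply the definition of $r^*$ directly to the specific unit vector $w=-y/\norm{y}\in\cK^*$ to get $-y^\top g(\bar{\bxi})\geq\norm{y}\,r^*$, and you supply the positivity $r^*\geq\epsilon>0$ by the same Slater-ball computation (with $u=-w$) applied uniformly over unit vectors of $\cK^*$ --- which is algebraically the same inequality the paper derives pointwise in $y$, just organized so that the minimax machinery is never needed. What the paper's longer route buys is the geometric interpretation of $r^*$ as the radius of the largest ball centered at $g(\bar{\bxi})$ inscribed in $\cK$, i.e., that the stated constant is the best one obtainable from this technique; your route buys brevity and, in places, more rigor than the paper (explicit attainment of the minima over the compact slices of the closed cone $\cK^*$, the degenerate cases $y=\mathbf{0}$ and $\cK^*=\{\mathbf{0}\}$, and positivity of $\tilde{r}$). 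Your treatment of $\tilde{r}\leq r^*$ via $w'=w^*/\norm{w^*}_1$ and $\norm{w^*}_1\geq\norm{w^*}=1$ is identical to the paper's $(P_2)$ argument; your closing caveat that \eqref{dual-bound-problem-tight} is convex orthant-by-orthant (and a single convex program when $\cK^*$ lies in one orthant) is actually more careful than the paper's unqualified convexity claim.
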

\begin{proof}
For any $y\in Q_{\bar{y}}\subset\cK^\circ$, we have that
{\small
\begin{equation}
q(\bar{y})\leq q(y)=\inf_{\bxi} \{\varphi(\bxi)-y^\top g(\bxi)\} \leq \varphi(\bar{\bxi})-y^\top g(\bar{\bxi}) \label{eq3},
\end{equation}}%
which implies that $y^\top g(\bar{\bxi})\leq \varphi(\bar{\bxi})-q(\bar{y})$.
Since $-g(\bar{\bxi})\in\intr(\cK)$ and $y\in\cK^\circ$, we clearly have $y^\top g(\bar{\bxi})>0$ whenever $y\neq\zero$. Indeed, since $-g(\bar{\bxi})\in\intr(\cK)$, there exist $r>0$ such that $-g(\bar{\bxi})+ru\in \cK$ for all $\|u\|\leq 1$. Hence, for $y\neq\zero$, by choosing $u=y/\|y\|$ and using the fact that $y\in\cK^\circ$, we get that $0\geq(-g(\bar{\bxi})+r y/\|y\|)^\top y$. Therefore, \eqref{eq3} implies that for all $y\in Q_{\bar{y}}$,
{$r\|y\| \leq y^\top g(\bar{\bxi})\leq \varphi(\bar{\bxi})-q(\bar{y})$; hence, $\|y\| \leq {\varphi(\bar{\bxi})-q(\bar{y}) \over r}$.}
Now, we will characterize the largest radius $r^*>0$ such that $\cB(-g(\bar{\bxi}),r^*)\subset\cK$, where $\cB(-g(\bar{\bxi}),r)\triangleq\{-g(\bar{\bxi})+r u:\ \norm{u}\leq 1\}$. Note that $r^*>0$ can be written explicitly as follows:
$r^*=\max\{r:\ d_\cK\big(-g(\bar{\bxi})+ru\big)\leq 0,\quad \forall u~\st\ \|u\| \leq 1\}$.
Let $\gamma(r)\triangleq\sup\{d_\cK\big(-g(\bar{\bxi})+ru\big):\ \norm{u}\leq 1\}$; hence, $r^*=\max\{r:\ \gamma(r)\leq 0\}$. Note that for any fixed $u\in\reals^m$, $d_\cK\big(-g(\bar{\bxi})+ru\big)$ as a function of $r$ is a composition of a convex function \eyy{$d_\cK(\cdot)$} with an affine function in $r$; hence, it is convex in $r\in\reals$ for all $u\in\reals^m$. Moreover, since supremum of convex functions is also convex, $\gamma(r)$ is convex in $r$. From the definition of $d_{\cK}(\cdot)$, we have
{\small
\begin{equation}
\gamma(r)=\sup_{\|u\|\leq 1}\inf_{\bxi\in \cK}\|\bxi+g(\bar{\bxi})-ru\|
=\sup_{\|u\|\leq 1}\inf_{\bxi\in\cK} \sup_{\|w\|\leq 1}w^\top \big(\bxi+g(\bar{\bxi})-ru\big) \label{eq5}.
\end{equation}}%
Since $\{w\in\reals^m:\ \|w\|\leq 1\}$ is a compact set, and the function in \eqref{eq5} is a bilinear function of $w$ and $\bxi$ for each $u$, the inner $\inf_{\bxi}$ and $\sup_w$ can be interchanged to obtain,\vspace{-2mm}%
{\small
\begin{align*}
\gamma(r)=\sup_{\|u\|\leq 1}\sup_{\|w\|\leq 1} \inf_{\bxi\in \cK} w^\top \Big(\bxi+g(\bar{\bxi})-ru\Big) = \sup\limits_{\substack{\|u\|\leq 1 \\ \|w\| \leq 1\\ w\in \cK^*}} w^\top \big(g(\bar{\bxi})-ru\big) = \sup\limits_{\substack{\|w\| \leq 1\\ w\in \cK^*}} w^\top g(\bar{\bxi})+r\|w\|. 
\end{align*}}%
Let $w^*(r)$ be 
one of the maximizers. It is easy to see that $\norm{w^*(r)}=1$, since the supremum of a convex function over a convex set is attained on the boundary of the set. Therefore,
$
\gamma(r)=\sup\limits_{\substack{\|w\|= 1\\ w\in \cK^*}} w^\top g(\bar{\bxi})+r.
$
Since $r^*=\max\{r:\ \gamma(r)\leq 0\}$, 
\vspace*{-2mm}%
{\small
\begin{align*}
(P_1):\qquad r^*&=\max\Big\{r:\ r\leq -\sup\{ w^\top g(\bar{\bxi}):\ \|w\|= 1,\ w\in \cK^*\}\Big\}
=\min\limits_{\substack{\|w\|= 1\\ w\in \cK^*}} -w^\top g(\bar{\bxi}).
\end{align*}}%
Note that $(P_1)$ is not a convex problem due to boundary constraint, $\|w\|= 1$. Next, we define a related convex problem:
{$\min\limits_{\substack{\|w\|_1= 1\\ w\in \cK^*}} -w^\top g(\bar{\bxi})
\leq r^*= \min\limits_{\substack{\|w\|= 1\\ w\in \cK^*}} -w^\top g(\bar{\bxi})$,} to lowerbound $r^*$ so that we can upper bound the right hand side of \eqref{eq:dual-bound-radius}.
Let $w^*$ be an optimal solution to $(P_1)$ and define $\bar{w}=w^*/\|w^*\|_1$. Clearly, $\|\bar{w}\|_1=1$ and $\bar{w}\in \cK^*$. Moreover, since $\|w^*\|_1\geq \|w^*\|=1$ we have that
{\small
\begin{align*}
0<\tilde{r}=\min\limits_{\substack{\|w\|_1= 1\\ w\in \cK^*}} -w^\top g(\bar{\bxi}) \leq-\bar{w}^\top g(\bar{\bxi})&=-\frac{1}{\|w^*\|_1} {w^{*}}^\top g(\bar{\bxi}) \leq -{w^{*}}^\top g(\bar{\bxi})
=r^*.
\end{align*}}%
\end{proof}
\vspace*{-2mm}
\begin{remark} 
\nv{Consider the problem in \eqref{eq:central_problem}. Given a Slater point $\bar{\bxi}$, one needs to solve the minimization problem \eqref{dual-bound-problem-tight} in a distributed fashion, e.g., using the method in \cite{aybat2016primal}, to obtain a dual bound $B\in(0,+\infty)$. Suppose $\varphi_i(\cdot)\geq \underline{\varphi}$ for all $i\in\cN$ and $N$ is known by all agents. Once $\tilde{r}$, the optimal value to~\cref{dual-bound-problem-tight} is computed, one can set $B=(\varphi(\bar{\bxi})-N\underline{\varphi})/\tilde{r}$, i.e., $\bar{y}=\mathbf{0}$. Moreover, if a Slater point exists but not available, one can solve the problem of $\bar{\bxi}=\argmin_{\bxi}~\cF(\sum_{i\in\cN}g_i(\xi_i))$ in a distributed fashion using methods proposed in \cite{chang2014distributed} to obtain a Slater point where $\cF:\reals^m\to\reals$ is a generalized logarithm function for the proper cone $\cK$ (see Section 11.6.1 in~\cite{boyd2004convex} for the definition). Next, $B$ can be computed as discussed previously.}
\end{remark}
\begin{remark}
Let $g_j:\reals^n\rightarrow\reals$ be the components of $g:\reals^n\rightarrow\reals^m$ for $j=1,\ldots,m$, i.e., $g(\bxi)=[g_j(\bxi)]_{j=1}^m$. When $\cK=\reals^m_+$, Lemma~1.1 in~\cite{Nedic08_1J} implies that for any $\bar{y}\in\dom q$ and $\bar{\bxi}$ such that 
$g_j(\bar{\bxi})<0$ for all $j=1,\ldots,m$, 
every $y\in Q_{\bar{y}}$ 
satisfies
$\norm{y}\leq \big(\varphi(\bar{\bxi})-q(\bar{y})\big)/\bar{r}$, 
where $\bar{r}\triangleq\min\{-g_j(\bar{\bxi}):\ j=1,\ldots,m\}$. Note our result in Lemma~\ref{dual-bound} gives the same bound since $r^*=\min_w\{-w^\top g(\bar{\bxi}):\ \|w\|= 1,\ w\in\reals^m_+\}=\bar{r}$.
\end{remark}

\section{Numerical Experiments}
\label{sec:numerics}
\sa{
We implemented 
the DPDA-D algorithm and tested its performance on two different set of problems.} 
\subsection{ Basis Pursuit Denoising (BPD) Problem}
Let 
$\bxi^*\in\reals^{n}$ be an unknown sparse vector, i.e., most of its elements are zero. Suppose $r\in\reals^{m}$ denotes a vector of $m\ll n$ noisy linear measurements of $\bxi^*$ using the measurement matrix $R\in\reals^{m\times {n}}$, i.e., $\norm{R\bxi^*-r}\leq\epsilon$ for some $\epsilon\geq 0$. The BPD problem can be formulated as
{\small
\begin{align}\label{basis-pursuit}
\min_{\substack{\bxi}}~\norm{\bxi}_1 \quad \st \quad \norm{R\bxi-r}\leq\epsilon.
\end{align}
}%
BPD appears in the context of {compressed sensing~\cite{donoho2006compressed}} and the objective is to recover the unknown sparse $\bxi^*$ from a small set of measurement or transform values in $r$.

Given a set of computing nodes $\cN$, suppose each node $i\in\cN$ knows $r\in\reals^m$ and stores only $n_i$ columns of $R$ corresponding to a submatrix $R_i\in\reals^{m\times n_i}$  such that $n=\sum_{i\in\cN}n_i$ and $R=[R_i]_{i\in\cN}$. Partitioning the decision vector $\bxi=[\xi_i]_{i\in\cN}$ accordingly, {BPD problem} in~\eqref{basis-pursuit} can be rewritten as follows:
\vspace*{-1mm}
{\small
\begin{align}\label{basis-pursuit-dist}
\min_{\xi_i\in\reals^{n_i},~i\in\cN}~\sum_{i\in\cN}\norm{\xi_i}_1 \quad \st \quad \big\|\sum_{i\in\cN}R_i\xi_i-r\big\|\leq\epsilon.
\end{align}
}%
Note \eqref{basis-pursuit-dist} can be cast into the form similar to \eqref{eq:central_problem}. Indeed, let $\chi:\reals\rightarrow\reals\cup\{+\infty\}$ such that $\chi(t)=0$ if $t=\epsilon$, and $+\infty$ otherwise; and let $\cK$ denote the second-order cone, i.e., $\cK=\{(y,t)\in\reals^m\times\reals:\ \norm{y}\leq t\}$. Hence, \eqref{basis-pursuit-dist} can be 
written as
\vspace*{-2mm}
{\small
\begin{align*}
\min_{t\in\reals, \xi_i\in\reals^{n_i},~i\in\cN}~\sum_{i\in\cN}\norm{\xi_i}_1+\chi(t) \quad \st \quad \big(\sum_{i\in\cN}R_i\xi_i-r, t\big)\in\cK.
\vspace*{-3mm}
\end{align*}
}%

First, we test the effect of network topology on the performance of the proposed algorithm, and then to benchmark this distributed algorithm, we also solve the same problem in a centralized way using Prox-JADMM algorithm proposed in~\cite{deng2013parallel}. Note that Prox-JADMM solves the problem in a centralized fashion which naturally {has a} faster convergence than a decentralized algorithm. The aim of this comparison is to show that the convergence of {the} proposed decentralized algorithm {would be competitive} with a \emph{centralized} method 
when nature of the problem {requires to store and access the data in a decentralized manner.} 
\subsubsection{Problem generation}
\label{sec:generation}
In the rest, we consider two different forms {of the problem in} \eqref{basis-pursuit}: {noisy, i.e., $\epsilon>0$ and noise free, i.e., $\epsilon=0$}. {In our experiments,} we set $n=120$ and $m=20$. For {the noisy case}, as suggested in \cite{aybat2012first}, the target signal $\bxi^*$ is generated {by choosing {$\kappa=20$ of its elements,} uniformly at random, drawn from the standard Gaussian distribution and the rest of the elements are set to $0$}. Moreover, each element of {$R=[R_{ij}]$ is i.i.d with} standard normal distribution, and the measurement $r=R\bxi^*+\eta$ where {$\eta\in\reals^m$ such that each of its elements is i.i.d. according to Gaussian distribution} with mean 0 and variance $\sigma^2=\kappa~10^{-S/10}$ -- this would generate a measurement vector $r$ with the signal-to-noise ratio~(SNR) equal to $S$ where $\hbox{SNR}(r)\triangleq 10 \log_{10}(\mathbb{E}[\norm{R\bxi^*}^2]/\mathbb{E}[\norm{\eta}^2])$. In our experiments, we consider $S=30\mathrm{dB}$ or $40\mathrm{dB}$. Finally, $\epsilon>0$ is chosen such that $\text{Pr}(\norm{\eta}^2\leq \epsilon^2)=1-\alpha$, and we let $\alpha=0.05$. {For the noise-free case, the noise parameters, i.e., $\sigma^2$ and $\epsilon$ are set to 0; hence, the constraint for the noise-free case is a linear one, i.e., $\sum_{i\in\cN}R_i\xi_i=r$ -- the rest of the problem components are generated as in the noisy case.}

{
{\bf Generating an undirected small-world network:} Let $\cG_u=(\cN,\cE_u)$ be generated as a random small-world network. Given $|\cN|$ and the desired number of edges $|\cE_u|$, we choose $|\cN|$ edges creating a random cycle over nodes, and then the remaining $|\cE_u|-|\cN|$ edges are selected uniformly at random.}

{
{\bf Generating a time-varying undirected network:} 
We first generate a random small-world $\cG_u=(\cN,\cE_u)$ as described above. Next, given $M\in\integers_+$, and $p\in(0,1)$, for each $k\in\integers_+$, we generate $\cG^t=(\mathcal{N},\mathcal{E}^t)$, the communication network at time $t\in\{(k-1)M,\ldots,kM-2\}$ by sampling $\lceil p~|\cE_u|\rceil$ edges of $\cG_u$ uniformly at random and we set $\mathcal{E}^{kM-1}=\cE_u\setminus \bigcup_{t=(k-1)M}^{kM-2}\cE^t$. In all experiments, we set $M=5$, $p=0.8$ and the number of communications per iteration is set to $q_k=10\log(k+1)$.}
\vspace*{-1mm}
\subsubsection{Effect of Network Topology}
\label{sec:network-effect}
In this section, we test the effect of network topology on the performance of DPDA-S and
DPDA-D on \emph{undirected} communication networks. We consider four scenarios in which the number of nodes $N\in\{10,~40\}$ and the average number of edges per node, $|\cE^t|/N$, is either $1.2$ or $\approx 3.6$. For each scenario, we plot relative suboptimality, i.e., $\big|\varphi(\bxi^k)-\varphi(\bxi^*)\big|/|\varphi(\bxi^*)|$, infeasibility, i.e., $\big(\norm{\sum_{i\in\cN}R_i\xi_i^k-r}-\epsilon\big)_+$, and consensus violation, i.e., $\max_{i\in\cN}\big\|y^k_i-\frac{1}{|\cN|}\sum_{j\in\cN}y_j^k\big\|$ versus iteration number $k$. All the plots show the average statistics over 50 randomly generated replications. In each of these independent replications, both $R$ and $\bxi^*$ are also randomly generated in addition to random communication networks.

{\bf Testing DPDA-S on static undirected communication networks:} 
We generated the static small-world networks $\cG=(\cN,\cE)$, as described above in Section~\ref{sec:generation}, for $(|\cN|,|\cE|)\in\{(10,15),$ $(10,45),~(40,60),~(40,180)\}$ and solved the BPD problem in~\eqref{basis-pursuit} using DPDA-S described in Fig.~\ref{alg:DPDA-S} {corresponding to SNR values $S\in\{30, ~+\infty\}$ -- when $S=+\infty~\mathrm{dB}$, we have $R\bxi^*=r$, i.e., noise-free case.} {The elements of the initial point $\bxi^0$} are i.i.d standard uniform distribution; and step-sizes are chosen as follows: $\gamma=2|\cN|/|\cE|$, $\tau_i=\frac{1}{\norm{R_i}}$, and $\kappa_i=\frac{1}{2\gamma d_i+\norm{R_i}}$ for $i\in\cN$. {The performance of DPDA-S in terms of suboptimality, infeasibility and consensus violation is displayed in Fig.~\ref{static-net}.} It is clear that when compared to the effect of average edge density, the network size $|\cN|$ has more influence on the convergence rate, i.e., the smaller the network faster the convergence is. On the other hand, for fixed size network, as expected, higher the density faster the convergence is, especially for consensus violation statistics.
\begin{figure}[h]
\centering
\vspace*{-4mm}
\hspace*{-5mm}
\includegraphics[scale=0.25]{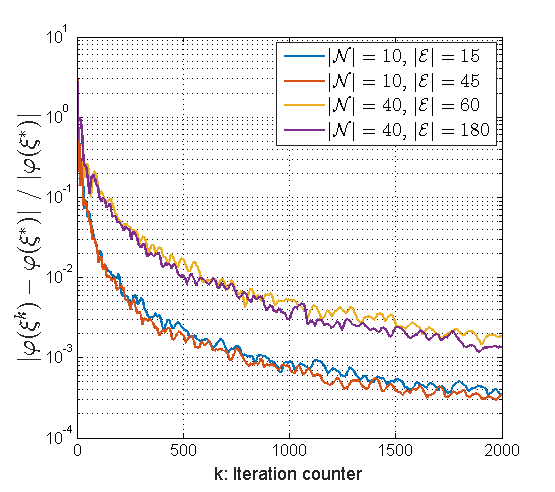}
\hspace*{-5mm}
\includegraphics[scale=0.25]{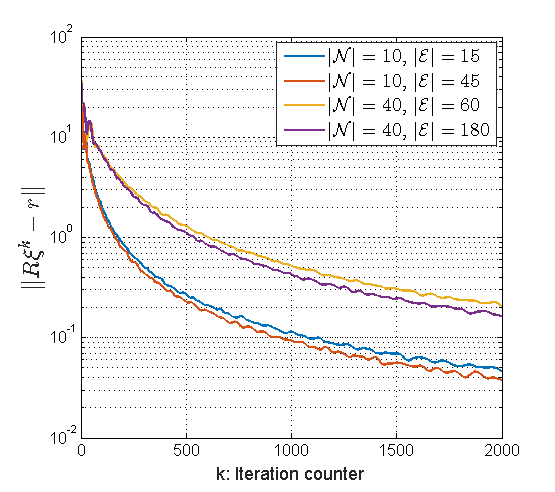}
\hspace*{-5mm}
\includegraphics[scale=0.25]{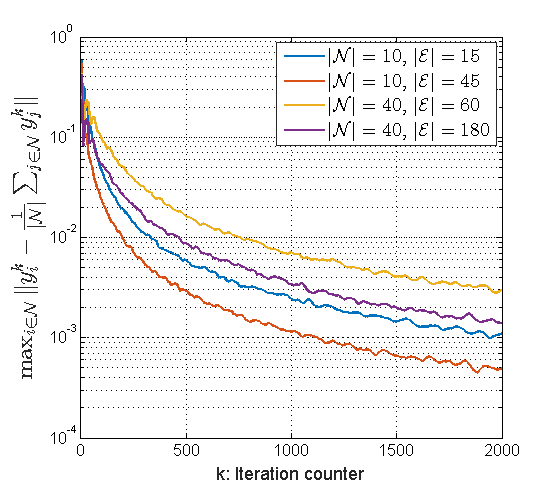}
\hspace*{-5mm}
\includegraphics[scale=0.25]{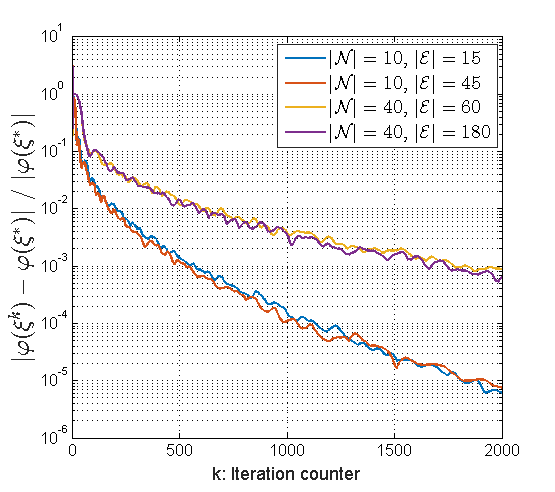}
\hspace*{-5mm}
\includegraphics[scale=0.25]{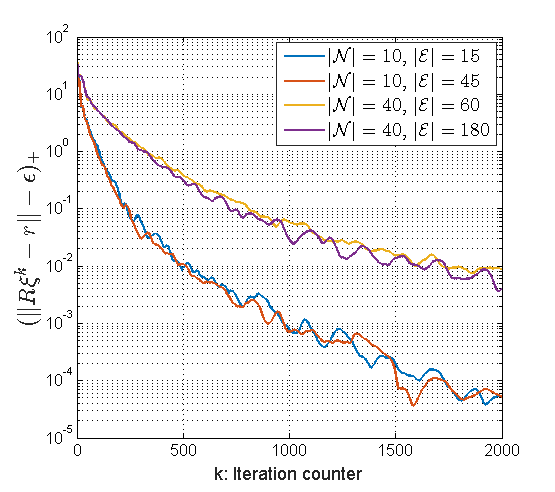}
\hspace*{-5mm}
\includegraphics[scale=0.25]{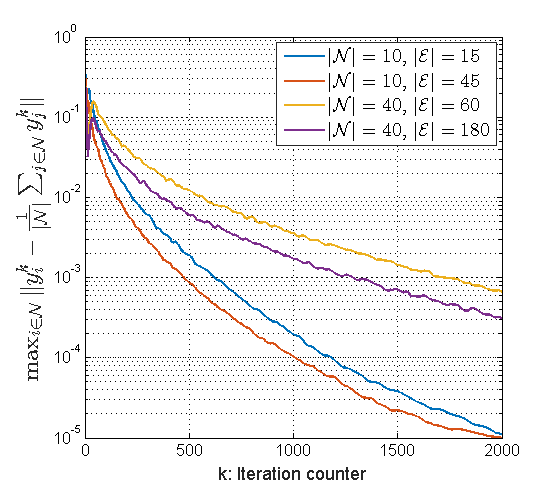}
\vspace*{-3mm}
\caption{Effect of network topology on the convergence rate of DPDA-S: \textbf{top} row corresponds to noise free and \textbf{bottom} row corresponds to noisy experiments with $S=30\mathrm{dB}$.}
\label{static-net}
\end{figure}

{\bf Testing DPDA-D on time-varying undirected communication networks:} We first generated an undirected small-world network $\cG_u=(\cN,\cE_u)$ as described earlier. 
Next, we generated $\{\cG^t\}_{t\geq 0}$ as described in Section~\ref{sec:generation}. 
We chose the initial point $\bxi^0$ of DPDA-D such that the components are i.i.d with the standard uniform distribution, and set the step-sizes as follows: $\gamma=1$, $\tau_i=\frac{1}{|\cN|\norm{R_i}}$, and $\kappa_i=\frac{1}{\gamma+\norm{R_i}/|\cN|}$ for $i\in\cN$. {The performance of DPDA-D in terms of suboptimality, infeasibility and consensus violation is displayed in Fig.~\ref{dynamic-net}.
It is clear that when compared to the effect of average edge density, the network size $|\cN|$ has more influence on the convergence rate, i.e., the smaller the network faster the convergence is; however, the average edge density does not seem to have a significant impact on the convergence.}
\begin{figure}[h]
\centering
\vspace*{-3mm}
\hspace*{-5mm}
\includegraphics[scale=0.14]{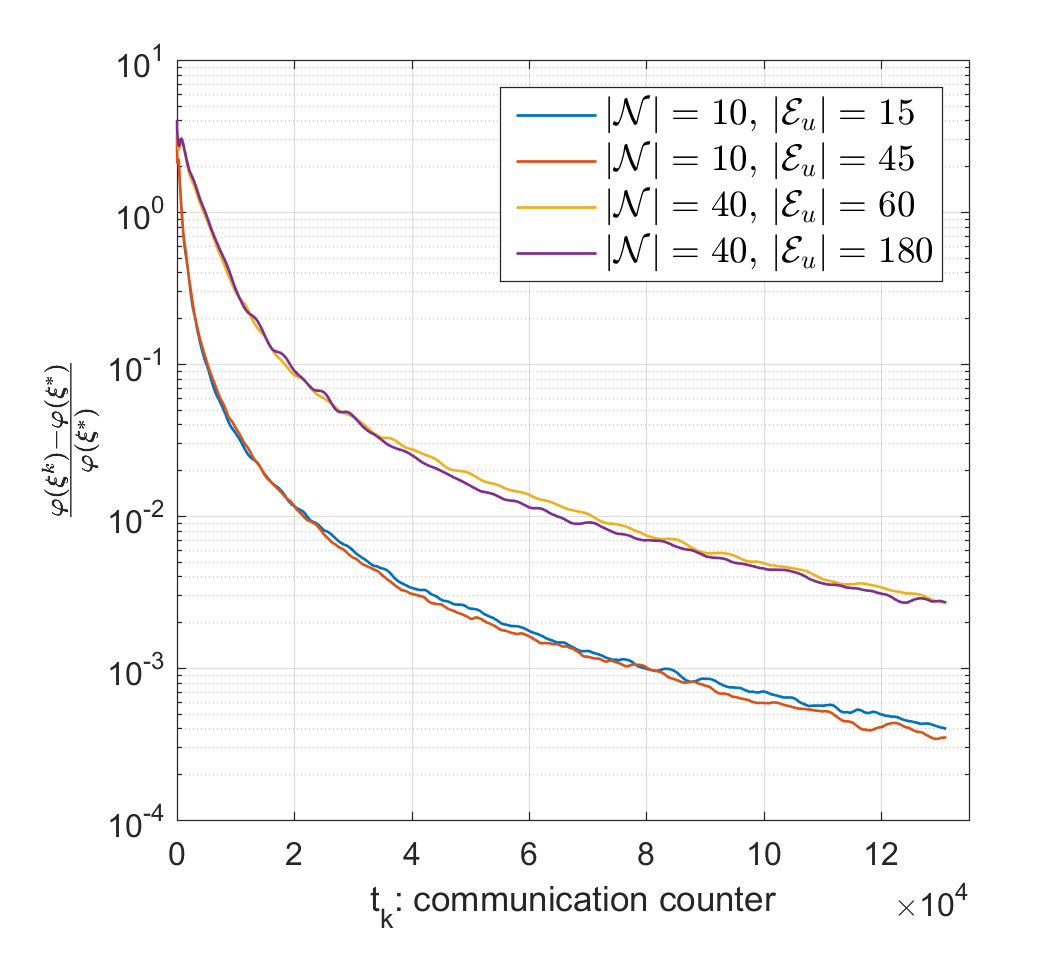}
\hspace*{-5mm}
\includegraphics[scale=0.14]{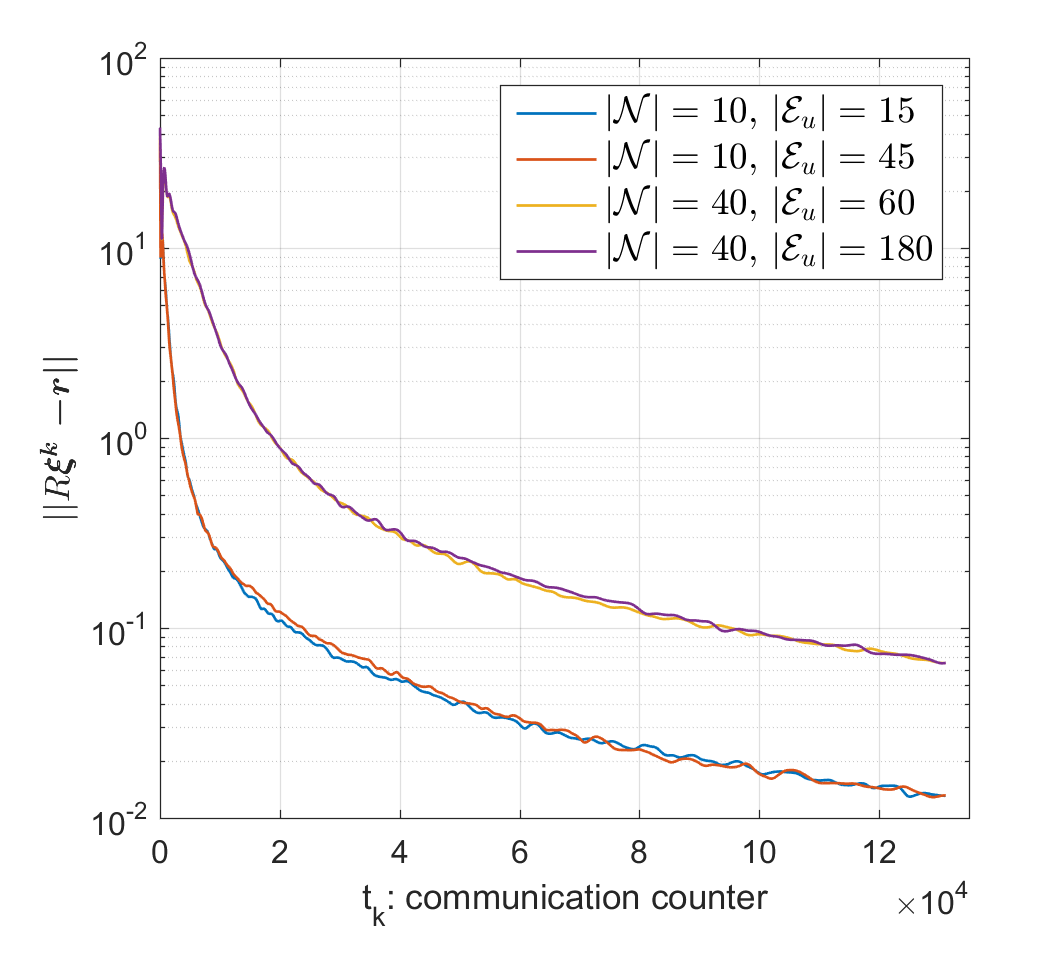}
\hspace*{-5mm}
\includegraphics[scale=0.14]{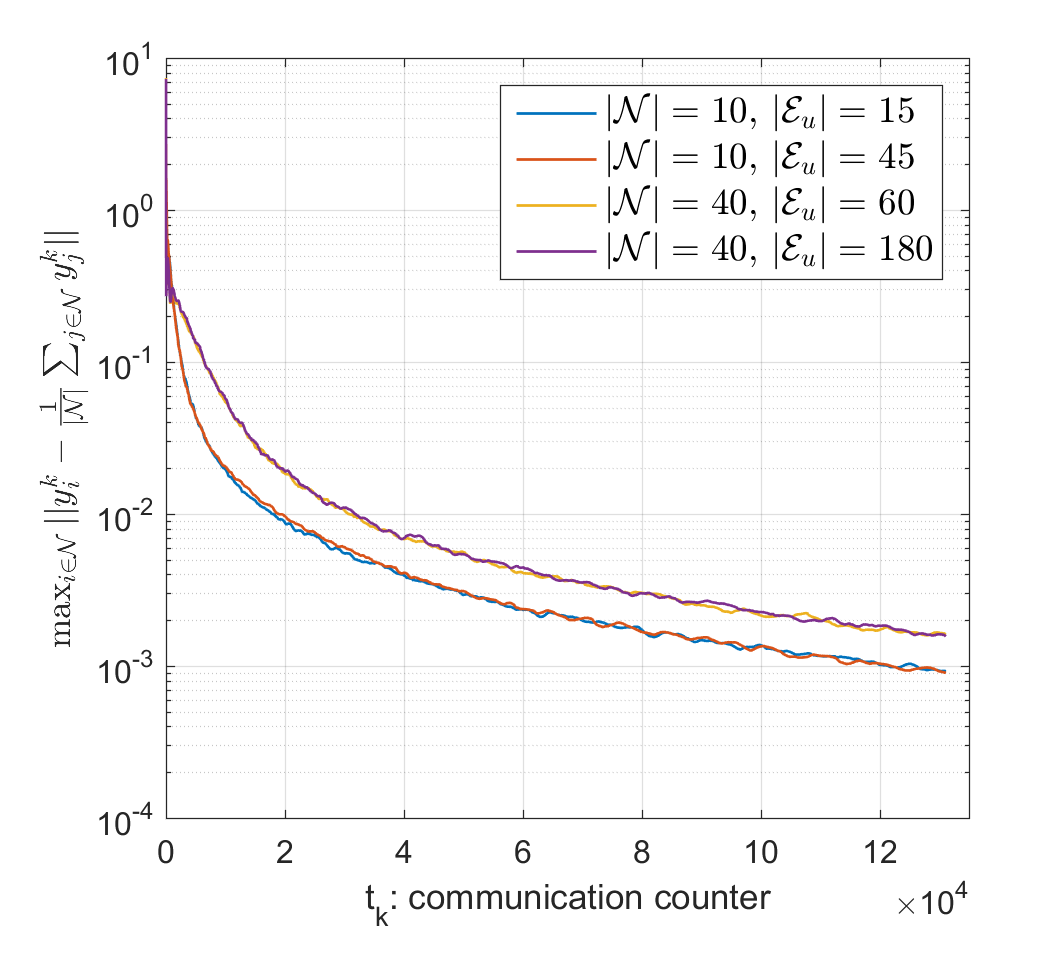}
\hspace*{-5mm}
\includegraphics[scale=0.14]{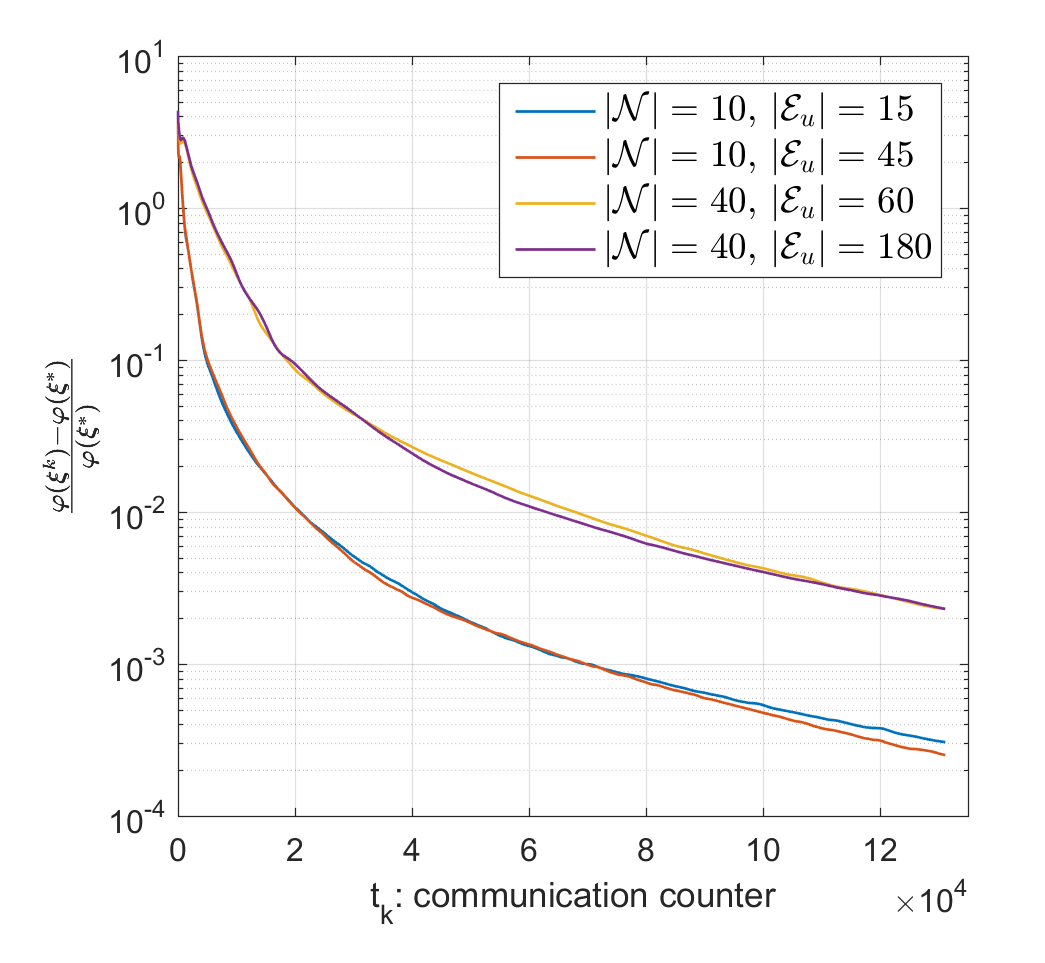}
\hspace*{-5mm}
\includegraphics[scale=0.14]{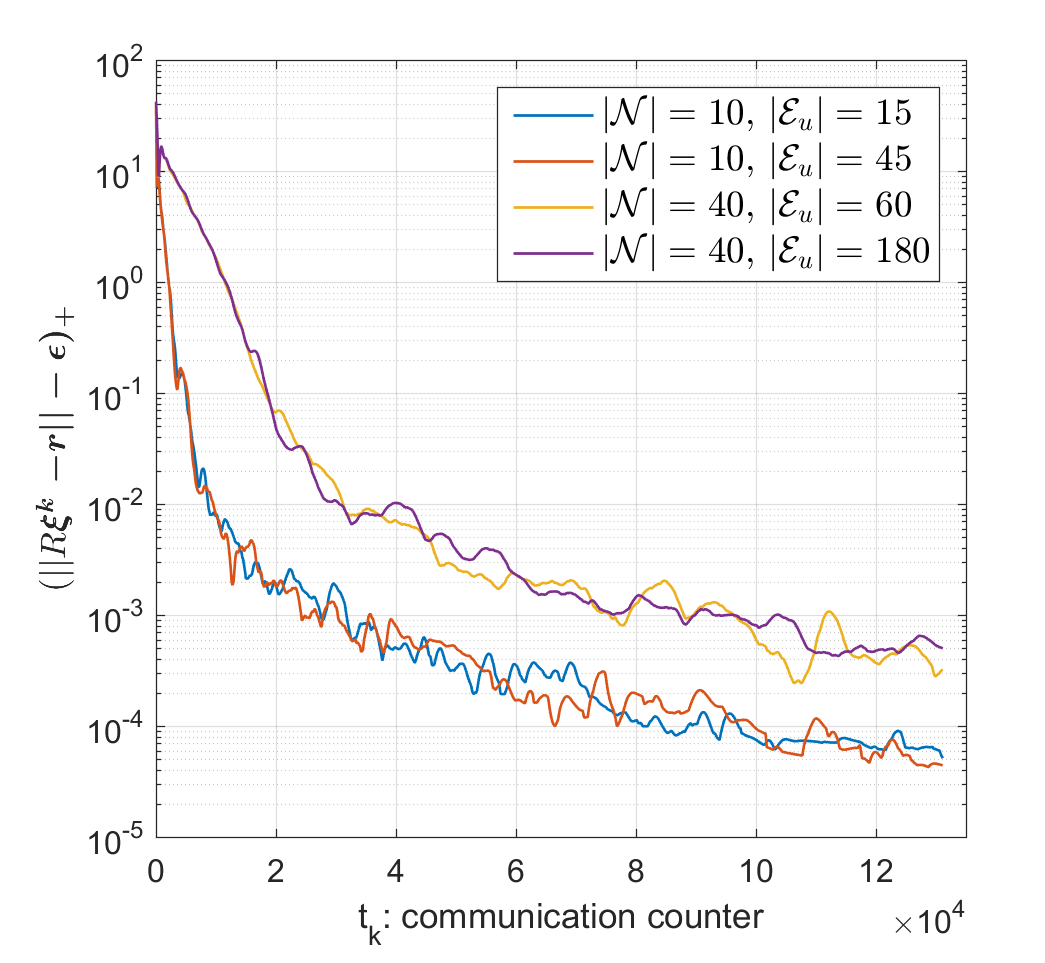}
\hspace*{-5mm}
\includegraphics[scale=0.14]{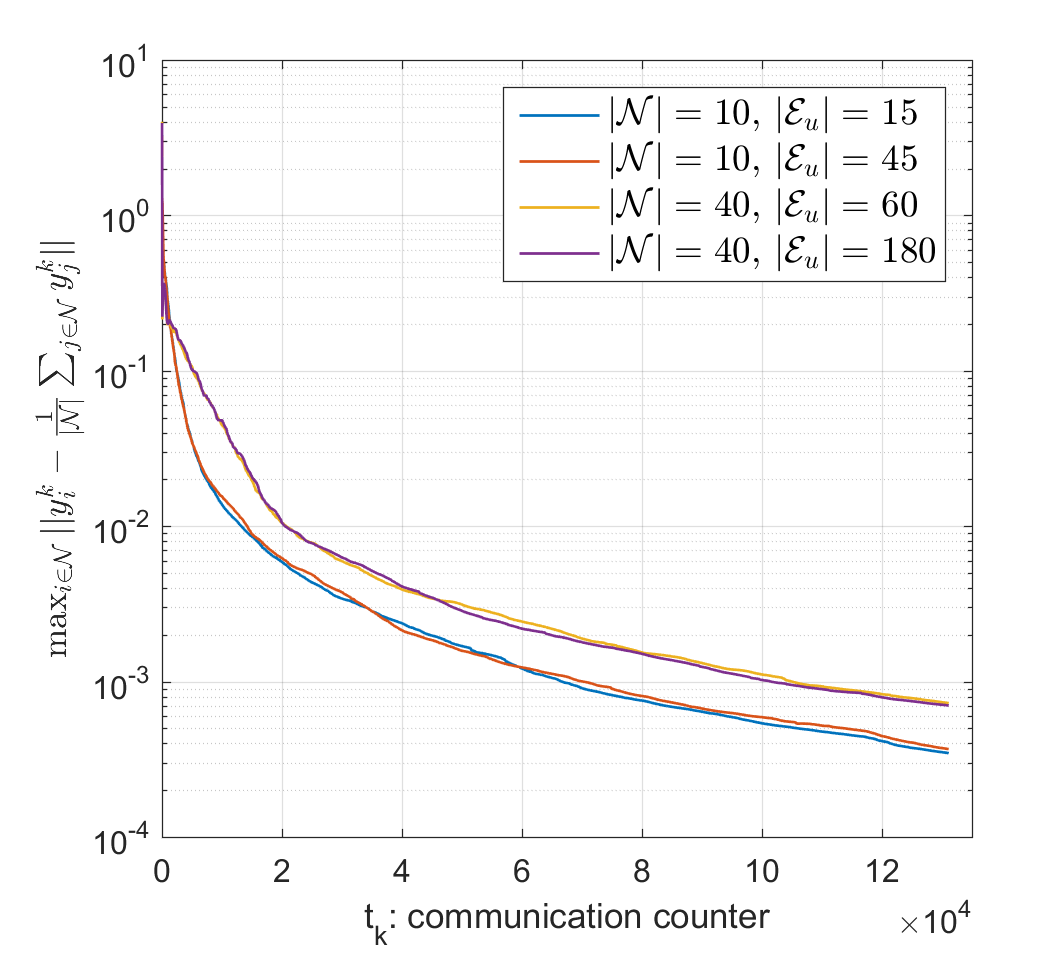}
\vspace*{-3mm}
\caption{Effect of network topology on the convergence of DPDA-D: \textbf{top} row corresponds to noise free and \textbf{bottom} row corresponds to noisy experiments with $S=30\mathrm{dB}$.}
\label{dynamic-net}
\end{figure}
\vspace*{-3mm}
\subsubsection{{Benchmarking against a centralized algorithm}}\label{subsec:benchmark}
In this section, we benchmark DPDA-S on undirected network and DPDA-D on both undirected and directed networks against Prox-JADMM algorithm implemented on BPD problems under three different noise levels; $S=30~\mathrm{dB}$, $S=40~\mathrm{dB}$ and noise free, i.e., $S=+\infty~\mathrm{dB}$. Prox-JADMM is a multi-block ADMM using Jacobian type updates and block-$i$ update has an additional proximal term $\tfrac{1}{2}\norm{\xi_i-\xi_i^k}_{P_i}^2$ for each $i\in\cN$, where $\{P_i\}_{i\in\cN}$ are positive-definite matrices satisfying certain conditions. We choose the parameters for Prox-JADMM algorithm as suggested in Section 3.2. of \cite{deng2013parallel}, i.e., by setting the matrix $P_i$ in the proximal term to be $P_i=(N\id-10~R_i^\top R_i)/\norm{r}_1$ for $i\in\cN$ and $\{P_i\}_{i\in\cN}$ are adaptively updated by the strategy discussed in Section 2.3. of~\cite{deng2013parallel}.

{\bf Static undirected network:} In each replication, we generate a random small-world network $\cG=(\cN,\cE)$ and choose the algorithm parameters as in static network experiments of Section~\ref{sec:network-effect}. The comparison between DPDA-S and Prox-JADMM in terms of suboptimality, infeasibility and consensus violation is displayed in Fig.~\ref{static-compare} for different levels of noise. We observe that the lower signal-to-noise ratio leads to faster convergence, and the noise-free case has the slowest convergence. For all noise levels, DPDA-S is competitive with Prox-JADMM. 
\vspace*{-3mm}
\begin{figure}[h]
\centering
\hspace*{-5mm}
\includegraphics[scale=0.18]{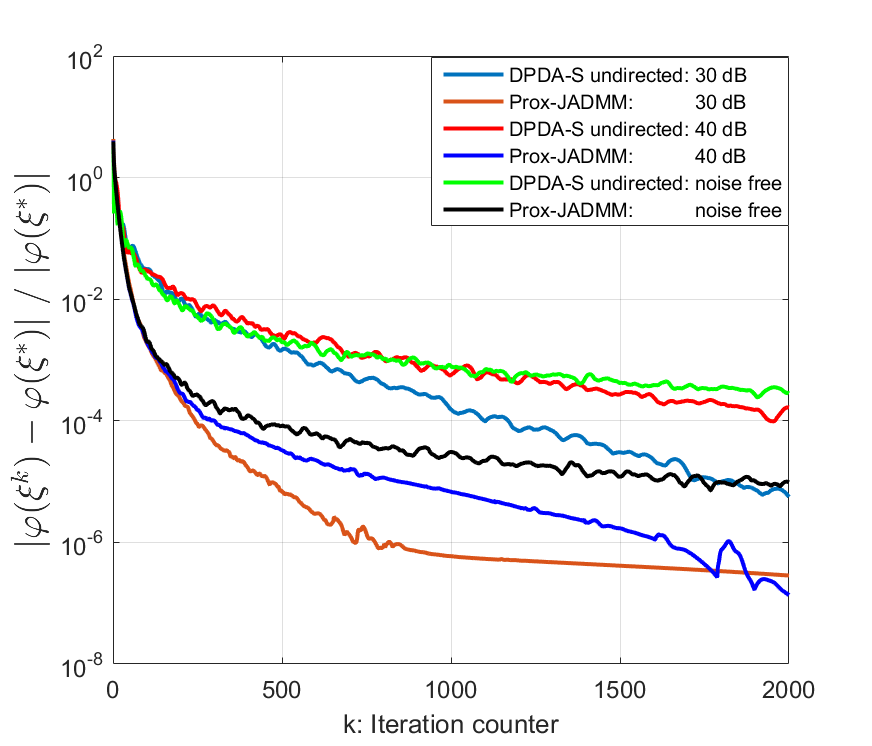}
\hspace*{-5mm}
\includegraphics[scale=0.18]{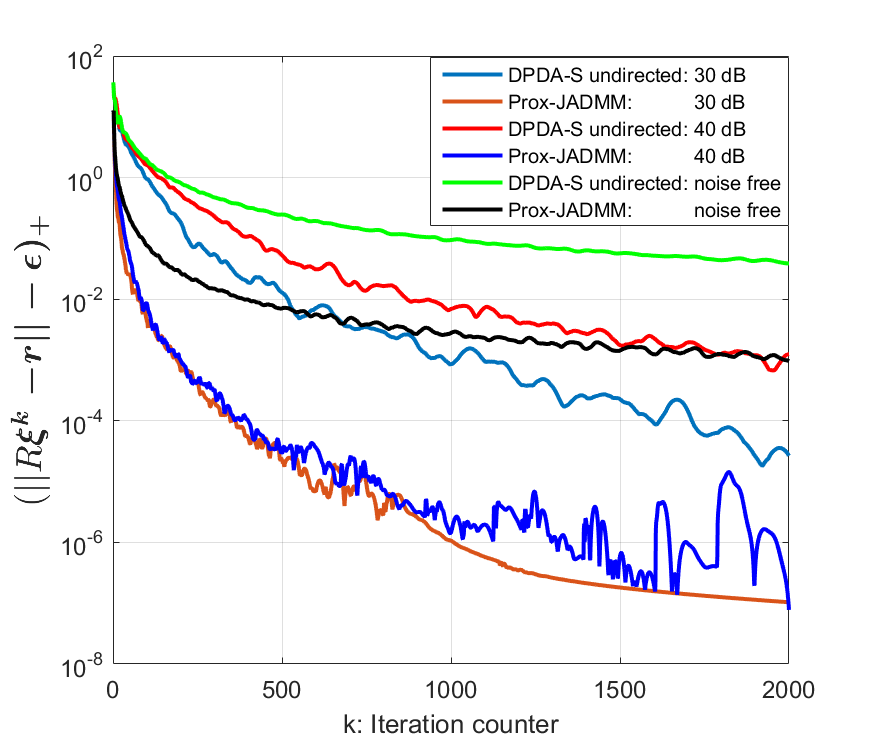}
\hspace*{-5mm}
\includegraphics[scale=0.18]{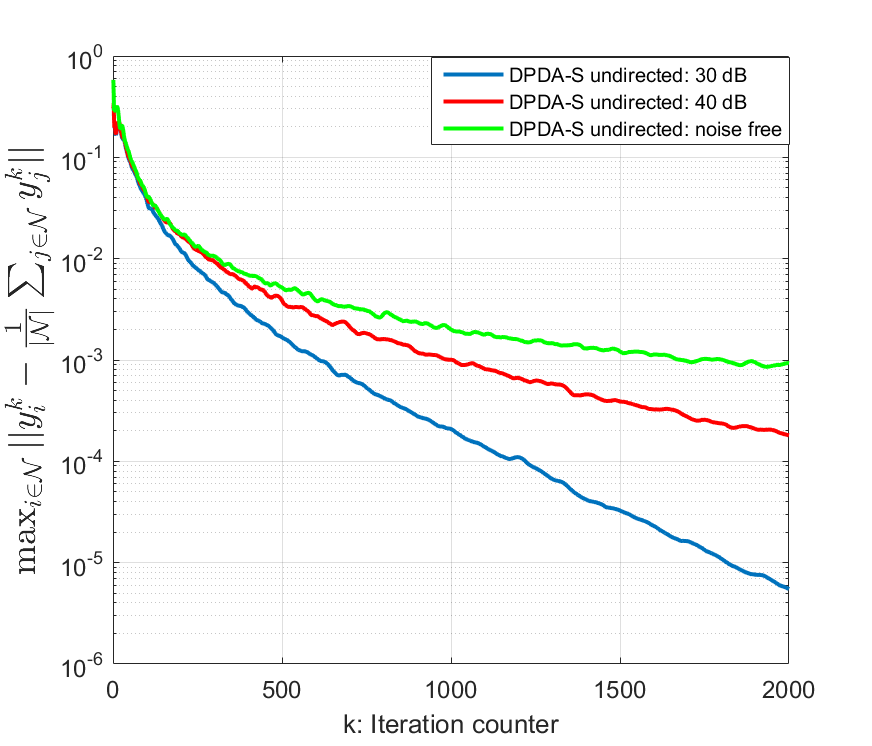}
\caption{Comparison of DPDA-S and Prox-JADMM over undirected static network for three different noise levels}
\label{static-compare}
\vspace*{-3mm}
\end{figure}

{\bf Time-varying undirected network:} For undirected time-varying networks we fix $N=10$ and $|\cE^t|/N=1.2$, i.e., $|\cE_u|/N=1.5$ -- we observe the same convergence behavior for the other network scenarios discussed in Section~\ref{sec:network-effect}. In each replication, we generate the network sequence $\{\cG^t\}_{t\geq 0}$ and choose the parameters as in time-varying network experiments of Section~\ref{sec:network-effect}. Fig.~\ref{undirected-compare} shows the comparison between {the two methods} 
in terms of suboptimality, infeasibility and consensus violation.
{We observe that different noise-levels lead to similar convergence patterns; however, the lower signal-to-noise ratio leads to faster convergence, and the noise-free case has the slowest convergence. For all noise levels DPDA-D is competitive against Prox-JADMM -- slightly slower rate of DPDA-D is the price we pay for the decentralized setting to reach consensus on the dual price over the time-varying 
network.}
\begin{figure}[h]
\centering
\vspace*{-3mm}
\hspace*{-4mm}
\includegraphics[scale=0.18]{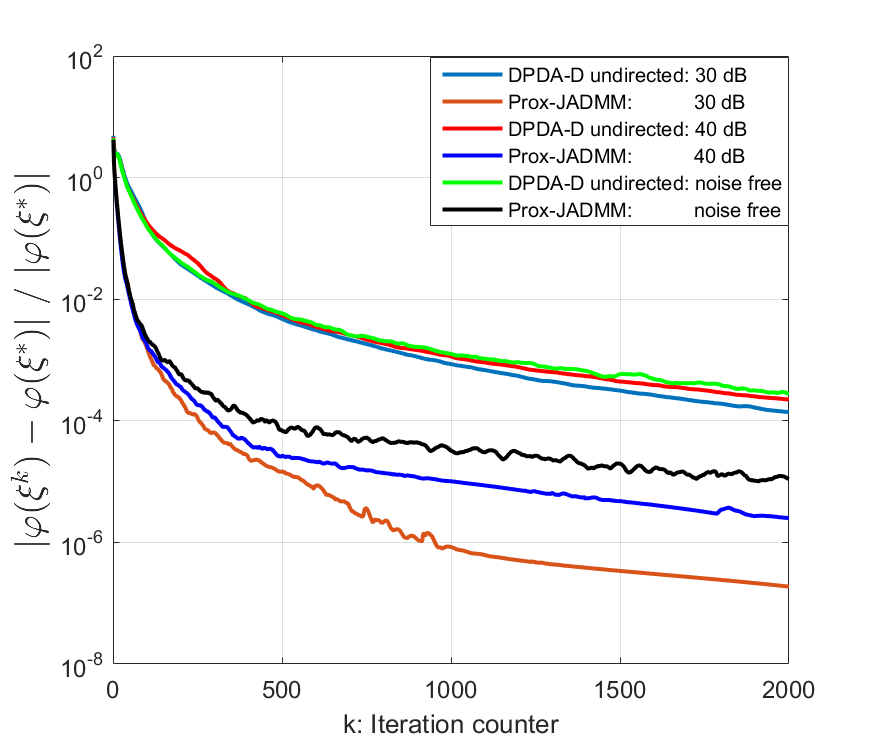}
\hspace*{-4mm}
\includegraphics[scale=0.18]{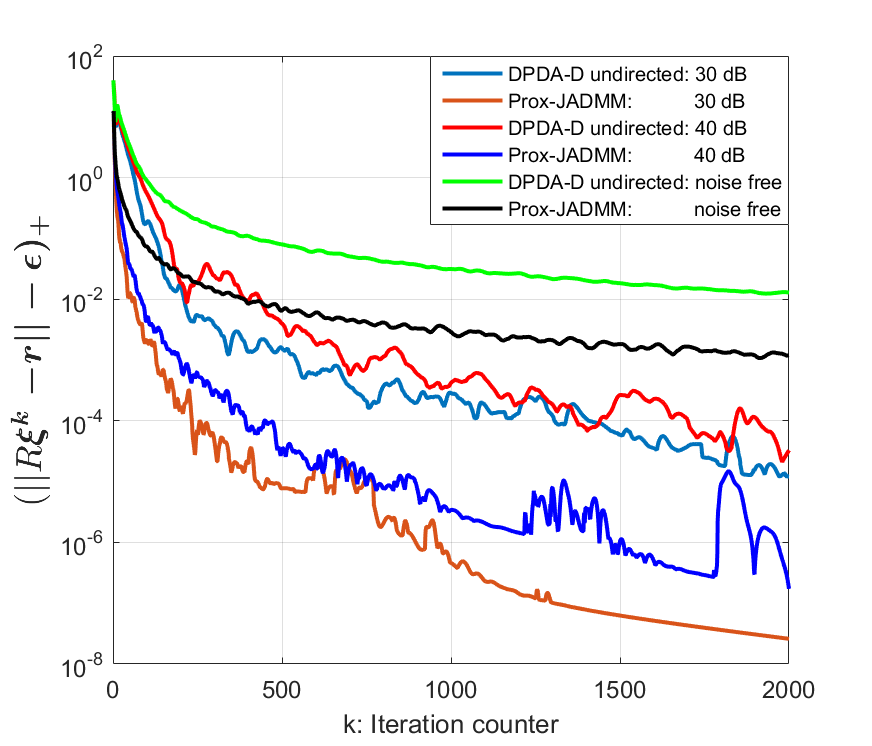}
\hspace*{-4mm}
\includegraphics[scale=0.18]{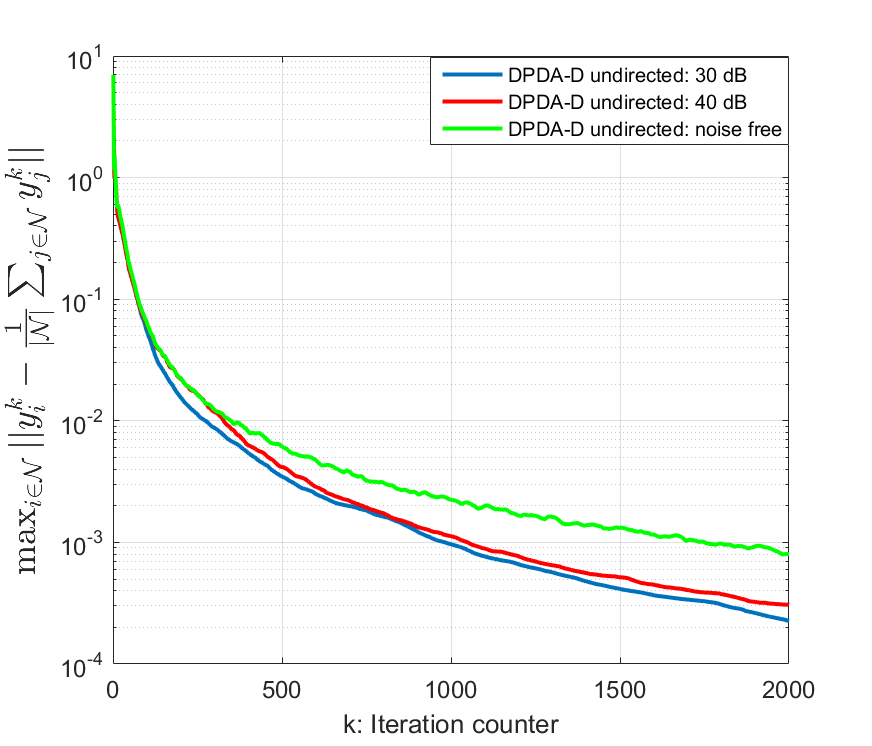}
\vspace*{-3mm}
\caption{Comparison of DPDA-D and Prox-JADMM over undirected time-varying network for three different noise levels}
\label{undirected-compare}
\end{figure}
\begin{figure}[h]
\centering
\vspace*{-3mm}
\hspace*{-4mm}
\includegraphics[scale=0.18]{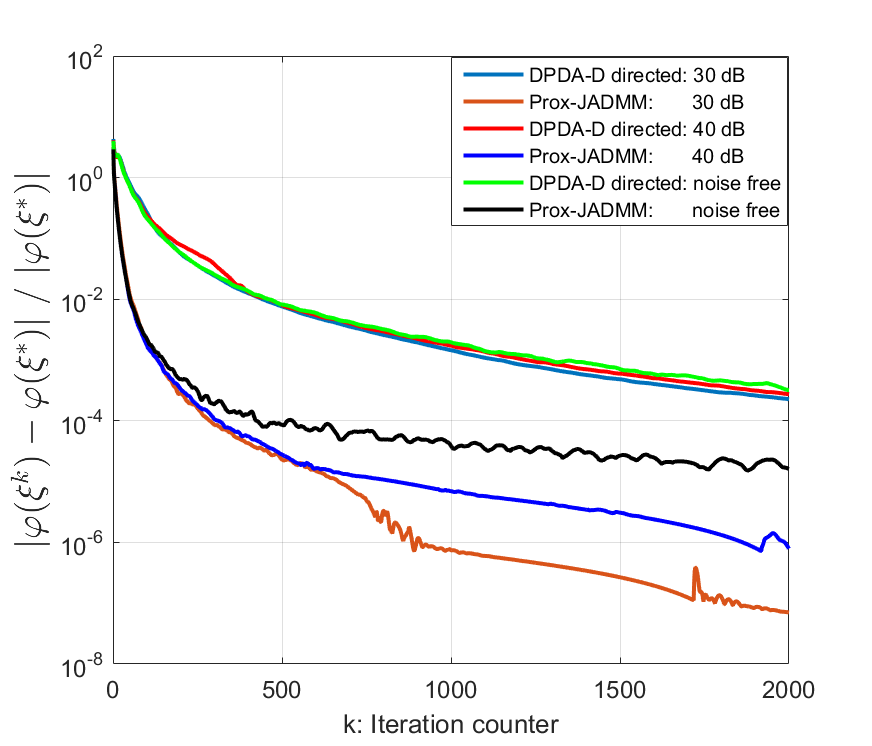}
\hspace*{-4mm}
\includegraphics[scale=0.18]{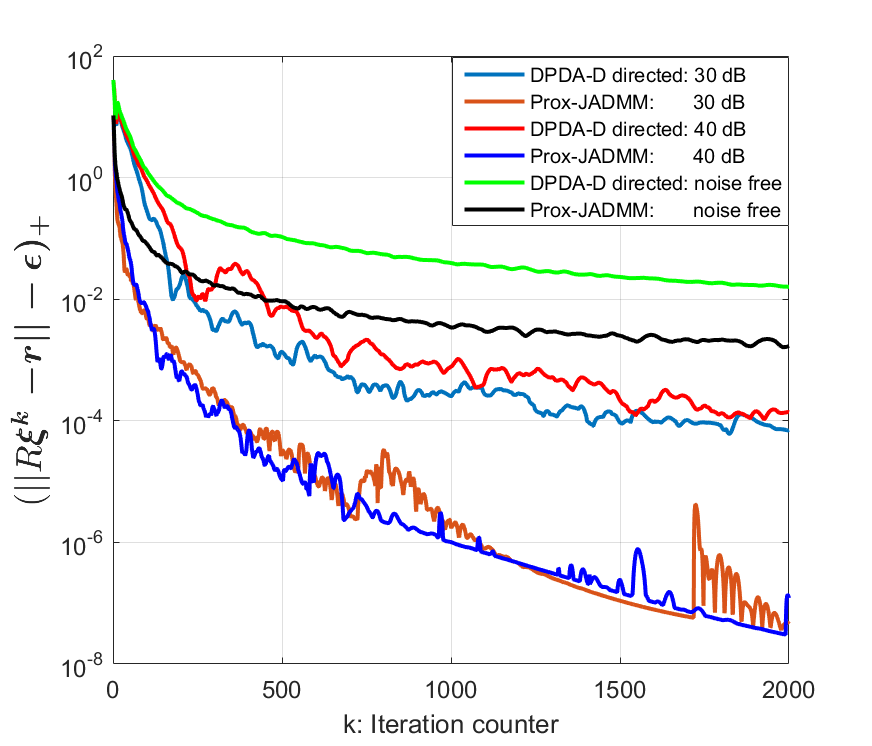}
\hspace*{-4mm}
\includegraphics[scale=0.18]{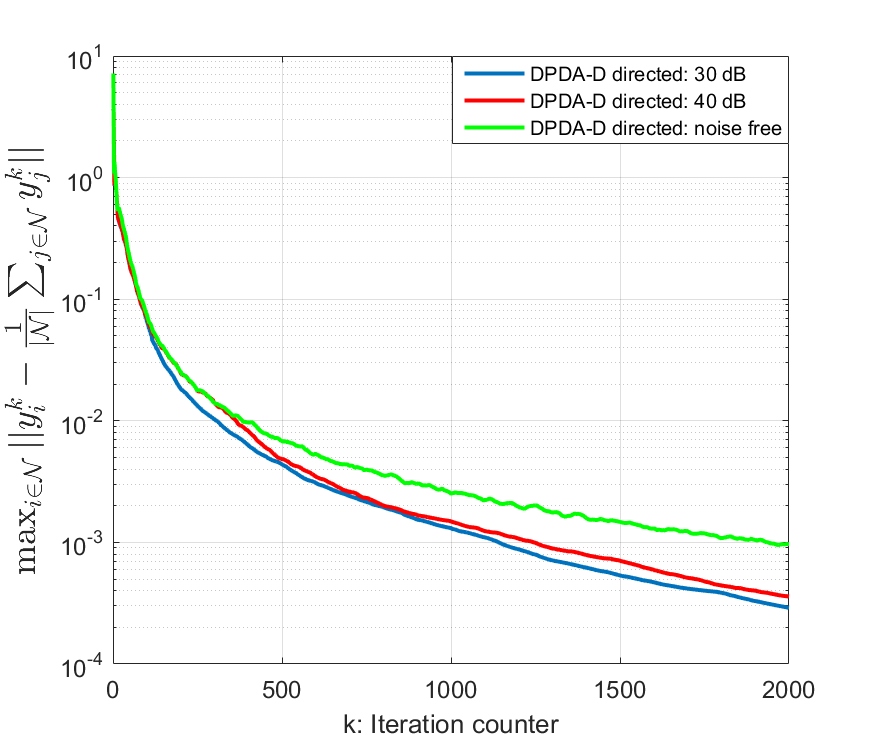}
\vspace*{-3mm}
\caption{Comparison of DPDA-D and Prox-JADMM over directed time-varying network with three noise levels}
\label{directed-compare}
\end{figure}
\vspace*{-2mm}
\begin{figure}[h]
\vspace*{-5mm}
\centering
\begin{minipage}[c]{1\linewidth}
    \begin{center}
      \begin{tikzpicture}
        \coordinate (x1) at (-0.5,0.9);
        \coordinate (x2) at (4.1,1.8);
        \coordinate (x3) at (4.3,0.3);
        \coordinate (x4) at (2,2.2);
        \coordinate (x5) at (3,-0.2);
        \coordinate (x6) at (1.5,-0.2);
        \coordinate (x7) at (3,1.9);
        \coordinate (x8) at (0,0.2);
        \coordinate (x9) at (1.25,1.5);
        \coordinate (x10) at (0.6,1.1);
        \coordinate (x11) at (0.4,1.9);
        \coordinate (x12) at (3.5,1);


        \draw [arrows={- triangle 45}] (x1) -- (x10);
        \node[align=left, above] at (x10) {\small $10$};

        \draw [arrows={- triangle 45}] (x1) -- (x6);
        \node[align=left, below] at (x6) {\small $6$};

        \draw [arrows={- triangle 45}] (x8) -- (x1);
        \node[align=left, left] at (x1) {\small $1$};

        \draw [arrows={- triangle 45}] (x8) -- (x10);

        \draw [arrows={triangle 45 - triangle 45}] (x8) -- (x6);
        \node[align=left, left] at (x8) {\small $8$};

        \draw [arrows={- triangle 45}] (x6) -- (x3);
        \node[align=left, right] at (x3) {\small $3$};

        \draw [arrows={- triangle 45}] (x11) -- (x1);

        \draw [arrows={- triangle 45}] (x9) -- (x11);
        \node[align=left, left] at (x11) {\small $11$};

        \draw [arrows={- triangle 45}] (x9) -- (x3);

         \draw [arrows={- triangle 45}] (x9) -- (x5);
        \node[align=left, left] at (x5) {\small $5$};

        \draw [arrows={- triangle 45}] (x4) -- (x9);
        \node[align=left, right] at (x9) {\small $9$};

        \draw [arrows={- triangle 45}] (x4) -- (x11);

        \draw [arrows={- triangle 45}] (x7) -- (x4);
        \node[align=left, above] at (x4) {\small $4$};

        \draw [arrows={- triangle 45}] (x7) -- (x12);
        \node[align=left, right] at (x12) {\small $12$};

        \draw [arrows={- triangle 45}] (x7) -- (x6);

        \draw [arrows={- triangle 45}] (x2) -- (x10);

        \draw [arrows={- triangle 45}] (x12) -- (x6);

        \draw [arrows={- triangle 45}] (x12) -- (x2);
        \node[align=left, above] at (x2) {\small $2$};

        \draw [arrows={- triangle 45}] (x12) -- (x5);

        \draw [arrows={- triangle 45}] (x3) -- (x12);

        \draw [arrows={- triangle 45}] (x5) -- (x3);

        \draw [arrows={- triangle 45}] (x10) -- (x7);
        \node[align=left, above] at (x7) {\small $7$};

        \draw [arrows={- triangle 45}] (x10) -- (x5);

        \filldraw[fill=red!50!](x1) circle [radius=0.08];
        \filldraw[fill=red!50!] (x2) circle [radius=0.08];
        \filldraw[fill=red!50!] (x3) circle [radius=0.08];
        \filldraw[fill=red!50!] (x4) circle [radius=0.08];
        \filldraw[fill=red!50!] (x5) circle [radius=0.08];
        \filldraw[fill=red!50!](x6) circle [radius=0.08];
        \filldraw[fill=red!50!] (x7) circle [radius=0.08];
        \filldraw[fill=red!50!] (x8) circle [radius=0.08];
        \filldraw[fill=red!50!] (x9) circle [radius=0.08];
        \filldraw[fill=red!50!] (x10) circle [radius=0.08];
        \filldraw[fill=red!50!] (x11) circle [radius=0.08];
        \filldraw[fill=red!50!] (x12) circle [radius=0.08];
      \end{tikzpicture}
    \end{center}
\end{minipage}
\vspace*{-3mm}
 \caption{$\cG_d=(\cN,\cE_d)$ directed strongly connected graph} \label{fig:G1}
\end{figure}

{\bf Time-varying directed network:} 
{In this scenario, similar to \cite{nedich2016achieving} we consider the \emph{strongly-connected} directed graph $\cG_d=(\cN,\cE_d)$} in Fig.~\ref{fig:G1} with $N=12$ nodes and $|\cE_d|=24$ directed edges.
We generated $\{\cG^t\}_{t\geq 0}$ as in the undirected case, but using $\cG_d$ instead of $\cG_u$, with parameters $M=5$, $p=0.8$, and \nv{$q_k=10\log(k+1)$}; hence, $\{\cG^t\}_{t\geq 0}$ is $M$-strongly-connected. Moreover, communication weight matrices $V^t$ are formed according to rule \eqref{eq:directed-weights}, {and we used the approximate averaging operator $\cR^k$ given in \eqref{eq:approx-average-dual-directed}}. We set the step-sizes as in the time-varying undirected case. Fig.~\ref{directed-compare} illustrates the comparison between DPDA-D and Prox-JADMM in terms of suboptimality, infeasibility and consensus violation when the network is both time-varying and directed. The results of this experiment are similar to those for the time-varying undirected case; hence, using unidirectional communications instead of bidirectional did not adversely affect the convergence of DPDA-D.
\subsection{Multi-channel Power Allocation Problem}
\label{sec:nonlinear-num}
\sa{
Multi-channel power allocation is a classic problem in information theory. Suppose there are a set of nodes connected to each other over a time-varying wireless communication network and all transmitting information to a receiver. Let the communication graph be $\cG^t=(\cN,\cE^t)$ at time $t>0$. Each node $i\in\cN$ transmits information over a different channel with bandwidth $b_i$ (given) with a signal transmission power $s_i\in[0,u_i]$ Watts and the signal is exposed to Gaussian White (uncorrelated) noise of additive nature, with power $w_i$ Watts (given). According to Shannon-Hartley equation the maximum capacity of the channel associated with node $i\in\cN$ is $b_i \log_2(1+s_i/w_i)$. Suppose we want to minimize the total power of the system subject to certain capacity requirement $\delta>0$, i.e., $\min\{\sum_{i\in\cN} s_i:\ \sum_{i\in\cN} b_i\log_2(1+s_i/w_i)\geq \delta,\ 0\leq s_i \leq u_i\}$.}

\sa{For numerical experiments, we consider the particular setup described in \cite{mateos2017distributed}:
{\small
\begin{align}\label{shannon1}
\min_{\bxi=[\xi_i]_{i\in\cN}} \sum_{i\in\cN}c_i\xi_i\quad \hbox{s.t.}\quad \sum_{i\in\cN} b_i\log(1+\xi_i)\geq \delta, \ \ \bxi\in [0,1]^{|\cN|}.
\end{align}}%
where $\bc=[c_i]_{i\in\cN}\in\reals^{|\cN|}$ and $\bb=[b_i]_{i\in\cN}\in\reals^{|\cN|}$ are chosen uniformly at random between 0 and 1. We consider both static and dynamic networks; dynamic ones are generated 
as in Section~\ref{sec:generation} with $|\cN|=50$ nodes and $|\cE_u|=150$ edges, and the static one is set to $\cG=(\cN,\cE_u)$. 
In the experiments we set $\delta=5$. 
 For benchmarking, we compared our algorithm against Consensus-based Saddle-Point Subgradient (CoBa-SPS) \footnote{The code is available online and it is used to implement on problem \eqref{shannon1}.} \cite{mateos2017distributed} and Mirror-prox \cite{he2015mirror} -- the former one is a decentralized algorithm while the latter one is a centralized algorithm. Mirror-prox algorithm requires the global Lipschitz constant of $\grad \cL$, where 
 $\cL(\bxi,\by)=\bc^\top\bxi+\fprod{\sum_{i\in\cN}b_i\log(1+\xi_i)-\delta,~\by}$, for $\bxi\in[0,1]^{|\cN|}$, which is $\sqrt{2}\norm{\bb}$. Mirror-prox and CoBa-SPS, also, require a bound on the dual solutions. Similar to \cite{mateos2017distributed}, for the Slater Point $\bar{\bxi}=\mathbf{1}_{|\cN|}$, we have that $\norm{y^*}\leq \frac{N\max_{i\in\cN}{c_i}}{\log(2)(\sum_{i\in\cN}{b_i}) -\delta}$. 
 We compare DPDA-S against CoBa-SPS and Mirror-prox. Since CoBa-SPS can only handle static network, when the network topology is time-varying, we compare DPDA-D only against Mirror-prox, where we set $q_k=10\log(k+1)$ within DPDA-D.
We choose our step-sizes according to \eqref{eq:step-rule-dual} where $L_{f_i}=0$, $L_{g_i}=C_{g_i}=b_i$, and we set $\gamma=1/|\cN|$. Fig.~\ref{fig:dynamic_paper} shows the performance of DPDA-D in terms of suboptimality and infeasibility as well as consensus violation. The performance of our method is comparable with the centralized Mirror-prox and slightly slower rate of DPDA-D is the price we pay for the decentralized setting. Fig.~\ref{fig:static_paper} compares the performance of DPDA-S against CoBa-SPS and Mirro-prox. Although CoBa-SPS finds a feasible solution and remains feasible, the iterates are far from optimality; and both suboptimality and consensus violations decrease with a slow rate. DPDA-S has a superior performance compared to CoBa-SPS. 
The discontinuity within infeasibility plots (middle figures) is due to achieving occasional feasibility when both primal and dual iterates are approaching their optimal solutions.
}
\vspace*{-3.5mm}
\begin{figure}[htb]\label{fig:dynamic_paper}
\includegraphics[scale=0.15]{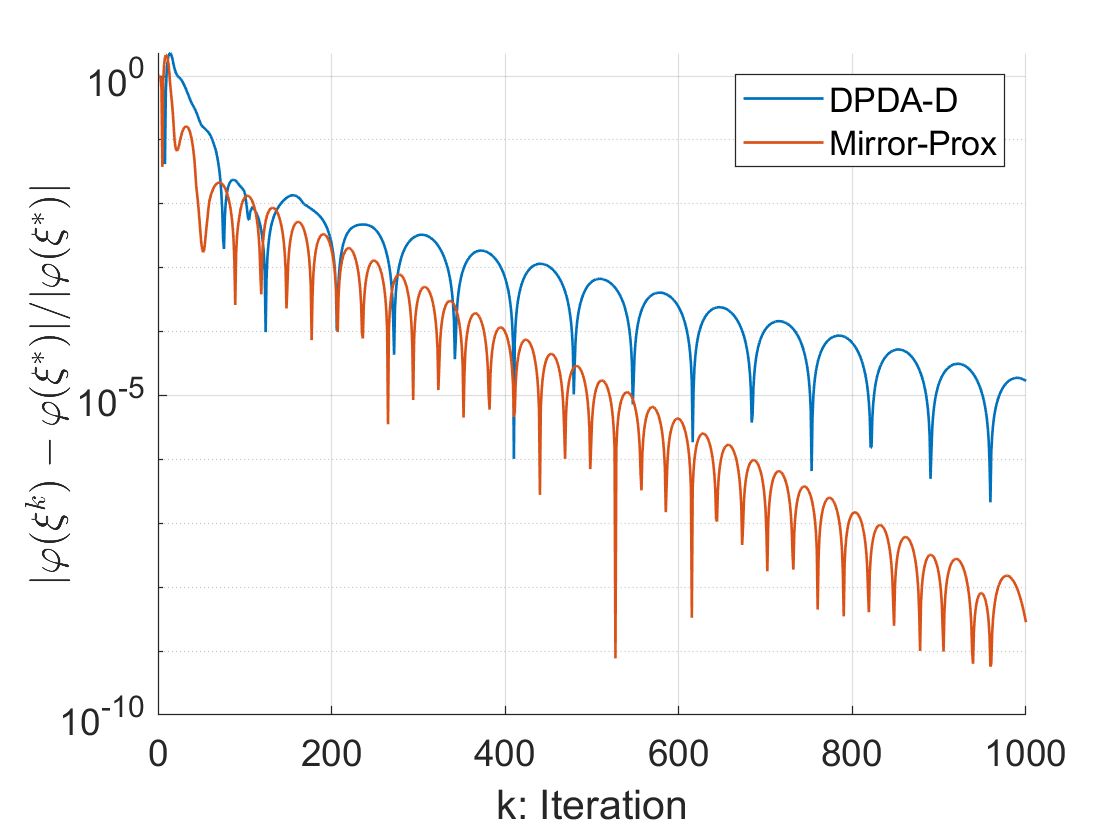}
\hspace*{-4mm}
\includegraphics[scale=0.15]{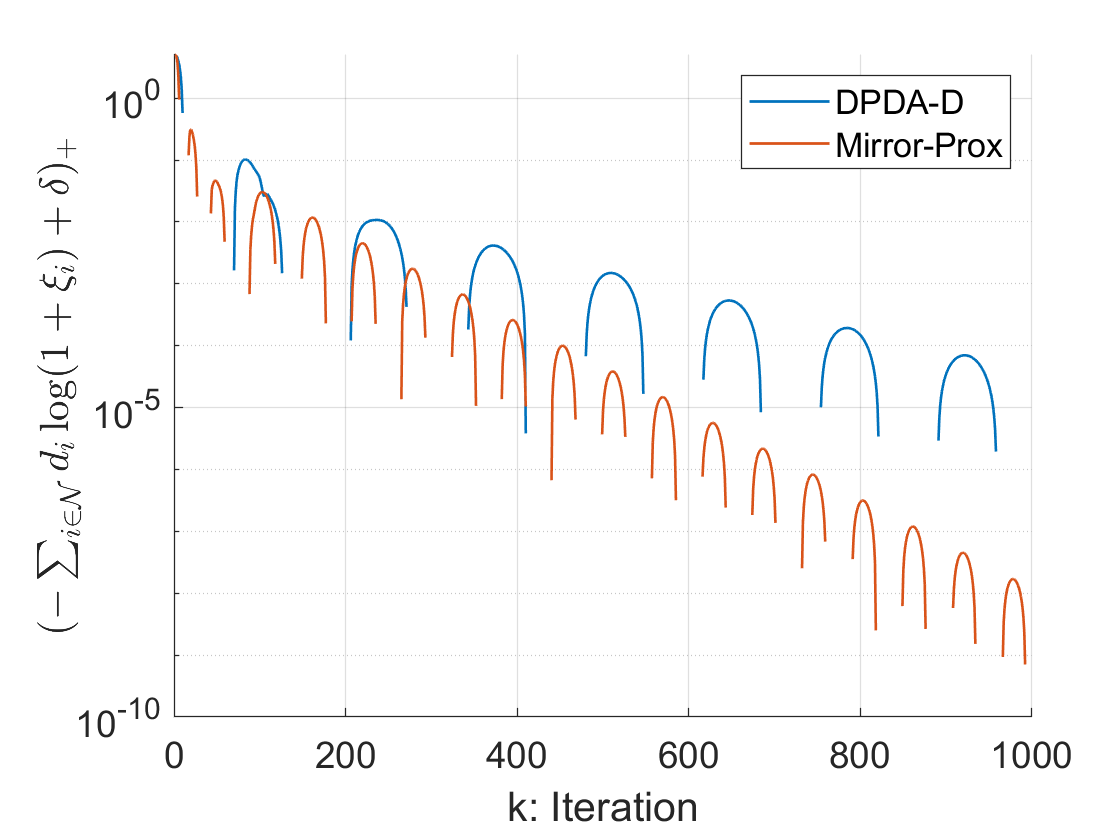}
\hspace*{-4mm}
\includegraphics[scale=0.15]{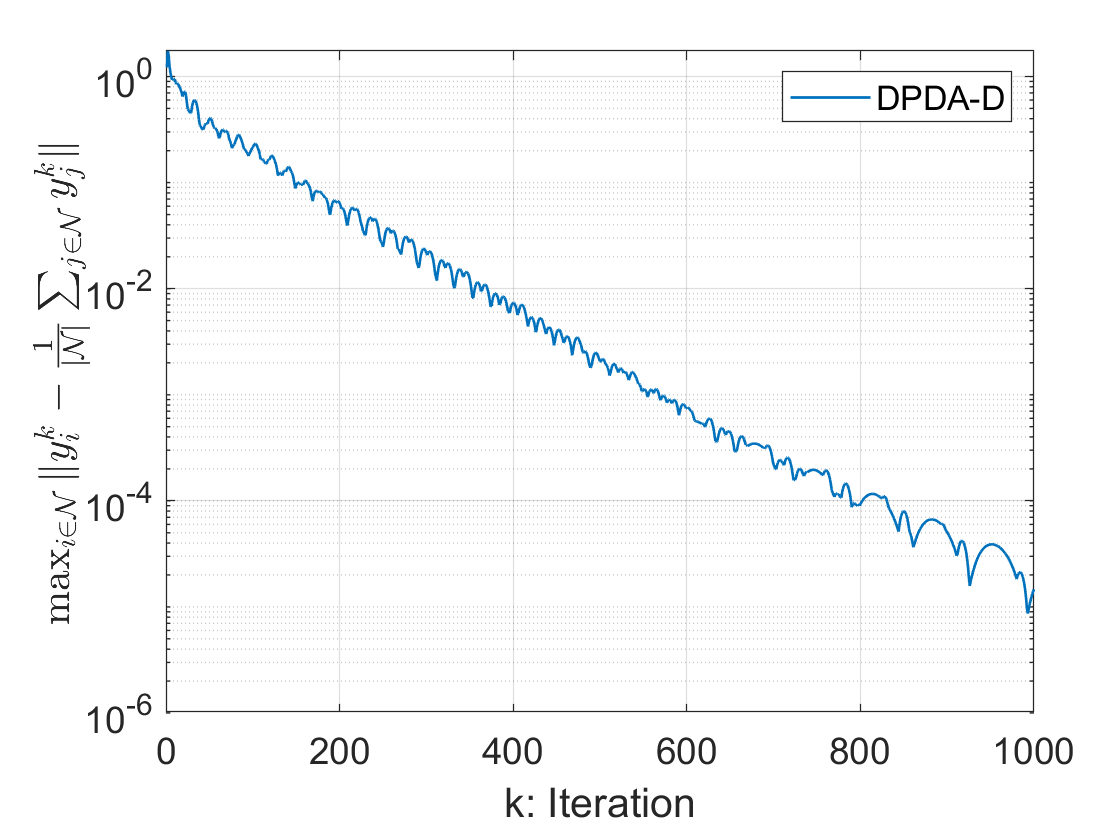}
\vspace*{-6mm}
\caption{Comparison between DPDA-D and Mirror-prox.}
\end{figure}
\vspace*{-7mm}
\begin{figure}[htb]\label{fig:static_paper}
\includegraphics[scale=0.15]{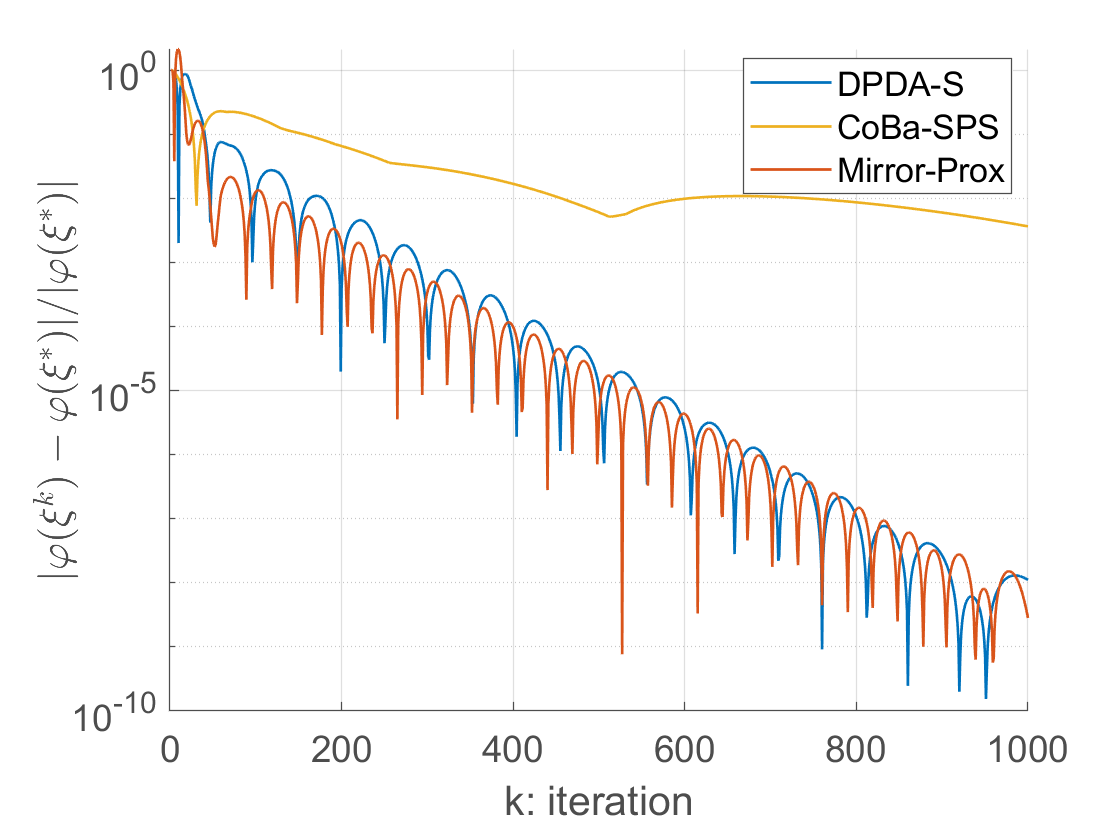}
\hspace*{-4mm}
\includegraphics[scale=0.15]{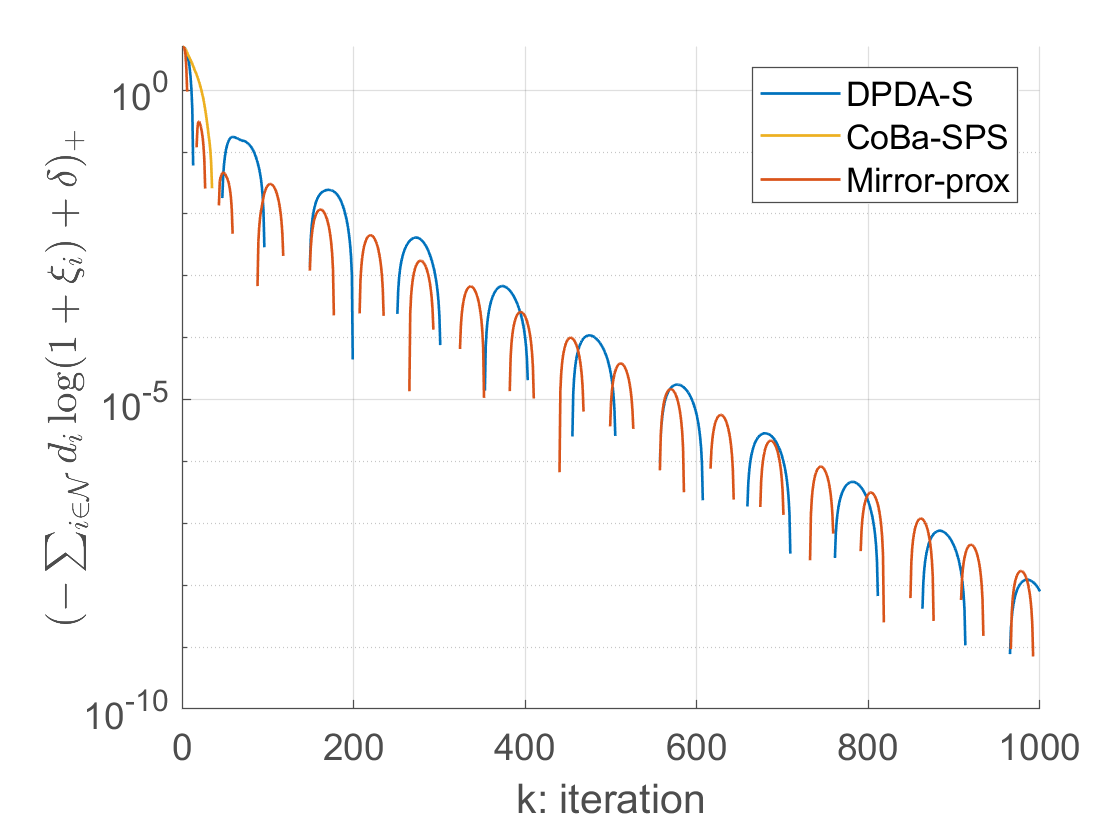}
\hspace*{-4mm}
\includegraphics[scale=0.15]{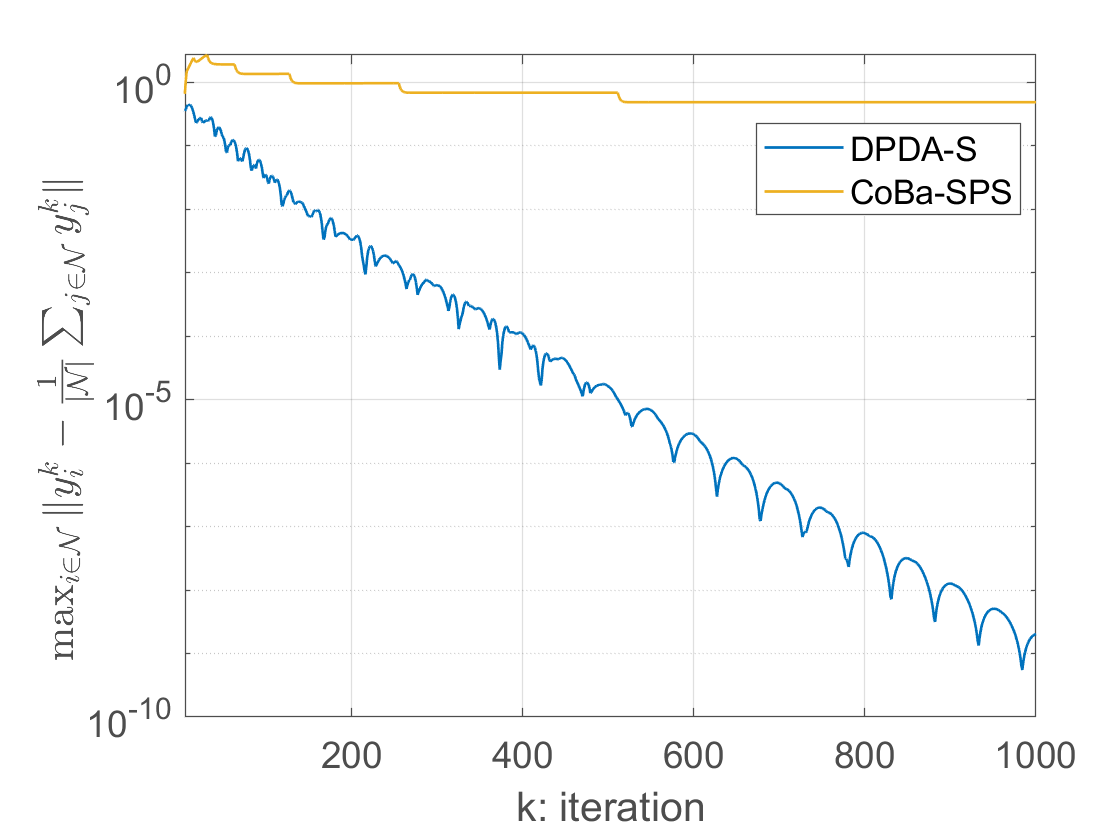}
\vspace*{-6mm}
\caption{Comparison among DPDA-S, Mirror-prox, and CoBa-SPS.}
\end{figure}
\vspace*{-4mm}
\section{Conclusions}
\label{sec:conclusions}
We propose a distributed primal-dual algorithm, DPDA-D, for solving cooperative multi-agent convex resource sharing problems over time-varying (un)directed communication networks, where only local communications are allowed. The objective is to minimize the sum of agent-specific composite convex functions subject to a conic constraint that couples agents' decisions. 
We show that the DPDA-D iterate sequence {converges to $\epsilon$-suboptimality/infeasibility within ${\cO}(1/\epsilon)$ number of iterations.} 
To the best of our knowledge, this is the best rate result for our setting. Moreover, DPDA-D employs agent-specific constant step-sizes {using local information.} 
As a potential future work, we plan to analyze convergence rates of similar primal-dual algorithms under certain strong convexity assumptions.

\bibliography{papers}{}
\bibliographystyle{siamplain}

\section{Appendix}\label{append}
\begin{lemma}\label{lem:supermartingale}
{\cite{Robbins71} Let $\{a^k\}$, $\{b^k\}$, $\{c^k\}$, and $\{d^k\}$ be non-negative real sequences such that $a^{k+1}\leq (1+d^k)a^k - b^k + c^k$ for all $k\geq 0$, $\sum_{k=0}^\infty c^k<\infty$, and $\sum_{k=0}^\infty d^k<\infty$. Then $a=\lim_{k\rightarrow \infty}a^k$ exists, and $\sum_{k=0}^\infty b^k<\infty$.}
\end{lemma}
\begin{lemma}\label{lem:technical-lemma}
\nv{Assume that $\{u_k\}_{k=0}^K\subset\reals_+$ satisfies $u_0^2\leq S_0$ and $u_k^2\leq S_k+\sum_{i=1}^k \lambda_iu_i$ for all $k\in\{1,\ldots,K\}$ for some $\{S_k\}_{k=0}^K$ non-decreasing in $k$ and $\{\lambda_k\}_{k=1}^K\subset\reals_+$. Then, the following inequality holds for all $k\in\{1,\ldots, K\}$:
\vspace*{-3mm}
\begin{eqnarray*}
u_k\leq \frac{1}{2}\sum_{i=1}^k \lambda_i+\left(S_k+\Big(\frac{1}{2}\sum_{i=1}^k\lambda_i \Big)^2 \right)^{1/2}.
\end{eqnarray*}}%
\end{lemma}
\end{document}